\title{\bf{Birational Calabi-Yau 3-folds and BPS state counting}
}
\date{}
\author{Yukinobu Toda}
\DeclareFontFamily{U}{rsfs}{%
\skewchar\font127}
\DeclareFontShape{U}{rsfs}{m}{n}{%
<-6>rsfs5<6-8.5>rsfs7<8.5->rsfs10}{}
\DeclareSymbolFont{rsfs}{U}{rsfs}{m}{n}
\DeclareRobustCommand*\rsfs{%
\@fontswitch\relax\mathrsfs}
\theoremstyle{plain}
\newtheorem{thm}{Theorem}[section]
\newtheorem{prop}[thm]{Proposition}
\newtheorem{lem}[thm]{Lemma}
\newtheorem{sublem}[thm]{Sublemma}
\newtheorem{defi}[thm]{Definition}
\newtheorem{rmk}[thm]{Remark}
\newtheorem{cor}[thm]{Corollary}
\newtheorem{prop-defi}[thm]{Proposition-Definition}
\newtheorem{question}[thm]{Question}
\newtheorem{conj}[thm]{Conjecture}
\newtheorem{exam}[thm]{Example}
\newcommand{\aA}{\mathcal{A}}
\newcommand{\cC}{\mathcal{C}}
\newcommand{\dD}{\mathcal{D}}
\newcommand{\eE}{\mathcal{E}}
\newcommand{\fF}{\mathcal{F}}
\newcommand{\hH}{\mathcal{H}}
\newcommand{\iI}{\mathcal{I}}
\newcommand{\mM}{\mathcal{M}}
\newcommand{\nN}{\mathcal{N}}
\newcommand{\oO}{\mathcal{O}}
\newcommand{\pP}{\mathcal{P}}
\newcommand{\qQ}{\mathcal{Q}}
\newcommand{\rR}{\mathcal{R}}
\newcommand{\sS}{\mathcal{S}}
\newcommand{\tT}{\mathcal{T}}
\newcommand{\uU}{\mathcal{U}}
\newcommand{\wW}{\mathcal{W}}
\newcommand{\xX}{\mathcal{X}}
\newcommand{\yY}{\mathcal{Y}}
\newcommand{\zZ}{\mathcal{Z}}
\newcommand{\lr}{\longrightarrow}
\newcommand{\Supp}{\mathop{\rm Supp}\nolimits}
\newcommand{\Hom}{\mathop{\rm Hom}\nolimits}
\newcommand{\dR}{\mathbf{R}}
\newcommand{\Var}{\mathop{\rm Var}\nolimits}
\newcommand{\SF}{\mathop{\rm SF}\nolimits}
\newcommand{\Chow}{\mathop{\rm Chow}\nolimits}
\newcommand{\id}{\textrm{id}}
\newcommand{\ch}{\mathop{\rm ch}\nolimits}
\newcommand{\Ext}{\mathop{\rm Ext}\nolimits}
\newcommand{\Spec}{\mathop{\rm Spec}\nolimits}
\newcommand{\Coh}{\mathop{\rm Coh}\nolimits}
\newcommand{\cneq}{\mathrel{\raise.095ex\hbox{:}\mkern-4.2mu=}}
\newcommand{\eqcn}{\mathrel{=\mkern-4.5mu\raise.095ex\hbox{:}}}
\newcommand{\NE}{\mathop{\rm NE}\nolimits}
\newcommand{\Aut}{\mathop{\rm Aut}\nolimits}
\newcommand{\Stab}{\mathop{\rm Stab}\nolimits}
\newcommand{\oPPer}{\mathop{\rm ^{0}Per}\nolimits}
\newcommand{\ofPPer}{\mathop{\rm ^{0}\mathfrak{Per}}\nolimits}
\newcommand{\iPPer}{\mathop{\rm ^{-1}Per}\nolimits}
\newcommand{\ifPPer}{\mathop{\rm ^{-1}\mathfrak{Per}}\nolimits}
\newcommand{\fPPer}{\mathop{\rm ^{\mathit{p}}\mathfrak{Per}}\nolimits}
\newcommand{\pPPer}{\mathop{\rm ^{\mathit{p}}Per}\nolimits}
\newcommand{\Imm}{\mathop{\rm Im}\nolimits}
\newcommand{\Ree}{\mathop{\rm Re}\nolimits}
\newcommand{\GL}{\mathop{\rm GL}\nolimits}
\newcommand{\length}{\mathop{\rm length}\nolimits}
\newcommand{\St}{\mathop{\rm St}\nolimits}
\begin{document}
\maketitle

\begin{abstract}
This paper contains some applications of Bridgeland-Douglas
stability conditions 
on triangulated categories, and Joyce's work on 
counting invariants of semistable objects, 
to the study of 
birational geometry. 
 We introduce the notion of motivic Gopakumar-Vafa invariants 
 as counting invariants of D2-branes, and show that 
 they are invariant under 
 birational transformations between Calabi-Yau 3-folds. 
The result is similar to the fact that birational 
Calabi-Yau 3-folds have the same 
betti numbers or Hodge numbers. 

\end{abstract}

\section{Introduction}
First of all, let us recall the following well-known fact. 
\begin{thm}\emph{\bf{\cite{Kof}}} \label{first}
Let $\phi \colon W\dashrightarrow X$ be a birational 
map between smooth projective Calabi-Yau 3-folds. Then we have 
$$b_i(W)=b_i(X) \quad \mbox{ for all }i\in \mathbb{Z}.$$
Here $b_i(\ast)$ is the $i$-th betti number of $\ast$. 
\end{thm}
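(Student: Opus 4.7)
My plan is to follow Kontsevich's motivic integration approach (equivalent in output to Batyrev's $p$-adic integration). The first step is to resolve the indeterminacy of $\phi$ via Hironaka: there exists a smooth projective $3$-fold $Y$ with birational morphisms $p\colon Y \to W$ and $q\colon Y \to X$ satisfying $q = \phi\circ p$ on a big open subset, each of $p, q$ being a sequence of blow-ups along smooth centers.

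Next I would match the relative canonical divisors. Since $W$ and $X$ are Calabi--Yau, choose trivializations $\omega_W \in H^0(W, K_W)$ and $\omega_X \in H^0(X, K_X)$; because the indeterminacy loci of $\phi$ and $\phi^{-1}$ have codimension $\geq 2$, one may rescale so that $\phi^{\ast}\omega_X = \omega_W$ on the common open subset, and then $p^{\ast}\omega_W$ and $q^{\ast}\omega_X$ agree on an open dense subset of $Y$, hence coincide as global sections of $K_Y$. In particular $K_{Y/W} = K_{Y/X}$ as effective divisors on $Y$. Applying the change-of-variables formula of Kontsevich (Denef--Loeser), in a suitable completion of the Grothendieck ring of varieties I obtain
\[
[W] = \int_{L(Y)} \mathbb{L}^{-\mathrm{ord}_{K_{Y/W}}}\,d\mu_Y
= \int_{L(Y)} \mathbb{L}^{-\mathrm{ord}_{K_{Y/X}}}\,d\mu_Y = [X],
\]
and passing to the Hodge--Deligne realization yields $h^{p,q}(W) = h^{p,q}(X)$ for all $p,q$, hence in particular $b_i(W) = b_i(X)$.

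The main obstacle is the change-of-variables formula itself, which requires setting up the motivic measure on the arc space $L(Y)$ (as a limit over truncations to $n$-jet schemes) and analyzing how contact orders along components of $K_{Y/W}$ interact with the measure of cylinders in $L(Y)$. A route that sidesteps motivic measures is Batyrev's $p$-adic integration: spread $W$ and $X$ over a finitely generated $\mathbb{Z}$-subalgebra $R \subset \mathbb{C}$, and for almost all maximal ideals of $R$ with residue field $\mathbb{F}_q$ use the Weil-type identity $\int_{Y(\mathcal{O}_K)}|p^{\ast}\omega_W| = |W(\mathbb{F}_q)|/q^{\dim W}$ together with the identification from the second step to conclude $|W(\mathbb{F}_q)| = |X(\mathbb{F}_q)|$ for all but finitely many residue fields; the Weil conjectures and Deligne's purity then force equality of Betti numbers.
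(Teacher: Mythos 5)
Your argument is correct, and it is the standard Batyrev--Kontsevich proof: resolve the indeterminacy of $\phi$, use the Calabi--Yau condition and Hartogs to normalize $\phi^{\ast}\omega_X=\omega_W$ so that $K_{Y/W}=K_{Y/X}$ on the common resolution (i.e. $W$ and $X$ are $K$-equivalent), and then conclude either through the motivic change-of-variables formula in the completed Grothendieck ring followed by the Hodge--Deligne realization, or through $p$-adic integration, the Weil conjectures and Deligne's purity. This is, however, a genuinely different route from the one in the paper. There the statement of Theorem~\ref{first} is quoted from Koll\'ar's paper on flops, and the integration methods you invoke are only mentioned as the source of the higher-dimensional and Hodge-theoretic generalizations; the paper's own contribution is a categorical re-derivation: by Remark~\ref{betti}, for $v=(0,1)$ and $\sigma\in U_X$ the invariant $P(v,\sigma)$ equals $\sum_i b_i(X)t^i$, so the equality of Betti numbers falls out of the $\beta=0$ case of the birational invariance of $P(v,\sigma)$. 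That invariance is proved by reducing to a single flop (Kawamata), transporting stability conditions through Bridgeland's derived equivalence $\Phi\colon \dD_W\to\dD_X$ and the perverse hearts $\pPPer(\dD_X)$, and applying Joyce's wall-crossing machinery to see that the counting invariants do not change between $U_X$ and the boundary points of Proposition~\ref{pt}. Your approach buys generality in one direction: it works in all dimensions and yields equality of Hodge numbers, at the price of the change-of-variables or Weil-conjecture machinery, which you correctly identify as the main input to be supplied. The paper's approach is confined to Calabi--Yau 3-folds and to Betti numbers, but it comes for free with the finer D2-brane invariants $n_g^{\beta}$, of which the present theorem is only the degenerate case $\beta=0$.
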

This result has been generalized for all dimensions by~\cite{Bat}, \cite{Wang}
using the method of \textit{$p$-adic integration}.
Later on the equality of Hodge numbers $h^{p,q}(X)$
(more generally stringy Hodge numbers $h_{st}^{p,q}(X)$ for 
varieties with log terminal singularities)
under birational maps
has been proved in~\cite{KonM}, \cite{Bat2}, \cite{Den} using the 
method of \textit{motivic integration}. 
(Also see~\cite{Yasu}, \cite{Ito} for related works.)

In terms of string theory, the numbers $b_i(X)$, $h^{p,q}(X)$
are interpreted as ``counting invariants" of BPS
D0-branes, which are mathematically stable zero-dimensional 
sheaves $\{\oO _x\}_{x\in X}$. 
 In this paper, we shall 
address the following question. 

\begin{question}\label{quest}\emph{
How do counting invariants of BPS D2-branes (i.e. stable 
one dimensional sheaves)
transform under 
birational transformations ?}
\end{question}

A similar problem has been studied in~\cite{Morr}, \cite{LiRu}, \cite{LY}, 
\cite{Komi}
for Gromov-Witten invariants and in~\cite{HL}, \cite{Sz}
for Donaldson-Thomas invariants. 
In this paper, we 
are interested in Question~\ref{quest}
for Gopakumar-Vafa invariants, 
which were
introduced by physicists Gopakumar and Vafa~\cite{GV}. 
In the paper~\cite{HST},  
Hosono, Saito and Takahashi~\cite{HST} proposed a first mathematical 
formulation of them. 
The purpose of this paper is to introduce another mathematical formulation
which we call \textit{motivic Gopakumar-Vafa invariants}
and study their behavior under birational transformations. 
Our method is quite different from the above works, 
and uses \textit{Bridgeland-Douglas stability conditions on 
triangulated categories}~\cite{Brs1}, \cite{Dou1}, \cite{Dou2},
 and \textit{Joyce's counting invariants of semistable objects}~\cite{Joy1},
 \cite{Joy2}, \cite{Joy3}, \cite{Joy4}.

\subsection{Gopakumar-Vafa invariants}
Let $X$ be a Calabi-Yau 3-fold over $\mathbb{C}$,  
$\beta \in N_1(X)$ and $g\in \mathbb{Z}_{\ge 0}$,
where $N_1(X)$ is the $\mathbb{R}$-vector space 
of numerical classes of one cycles. 
 The 0-point genus $g$ Gromov-Witten invariant of $X$ in 
 numerical class $\beta$ is defined as the integration over the 
 virtual fundamental class of the moduli space of stable maps
 $\overline{\mM}_{g,0}(X, \beta)$, 
 $$N_{g}^{\beta} \cneq \int _{\overline{\mM}_{g,0}^{virt}(X, \beta)}
 1_{\overline{\mM}_{g,0}(X,\beta)} \in \mathbb{Q}.$$
 Although the invariants $N_{g}^{\beta}$ are not integers 
 in general, Gopakumar and Vafa~\cite{GV} claimed the following integrality 
 of the generating function involving $N_{g}^{\beta}$, based on the string 
 duality between Type IIA and M-theory. 
 \begin{conj}\label{conj}
  There are integers $n_{g}^{\beta,conj}$ and 
  the following equality in  
  $\mathbb{Q}[[N_1(X),\lambda^{\pm 1}]]$,
 $$\sum _{\beta, g}
 N_g ^{\beta}q^{\beta}\lambda^{2g-2}= 
  \sum _{\beta, g, k}\frac{n_{g}^{\beta,conj}}{k}\left( 2\sin \left(
 \frac{k\lambda}{2} \right) \right)^{2g-2}q^{k\beta}.$$
 \end{conj}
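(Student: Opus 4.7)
The plan is to split the statement into two parts: (a) the unique determination of rational numbers $n_g^{\beta,conj}$ for which the identity holds; and (b) integrality, which is its substantive content. Part (a) is a formal manipulation: expanding $(2\sin(k\lambda/2))^{2g-2}$ as a Laurent series in $\lambda$ starting at $\lambda^{2g-2}$, the coefficient of $q^{\beta_0}\lambda^{2h-2}$ on the right-hand side involves $n_g^{\beta,conj}$ only for $g\le h$ and for $\beta$ dividing $\beta_0$ in the effective monoid of $N_1(X)$. A M\"obius-type recursion on $(\beta_0,h)$ extracts each $n_g^{\beta,conj}\in\mathbb{Q}$ uniquely from the $N_g^\beta$, and simultaneously forces $n_g^{\beta,conj}=0$ for $g$ large depending on $\beta$, so that the right-hand side is a well-defined element of $\mathbb{Q}[[N_1(X),\lambda^{\pm 1}]]$.

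For part (b) I would follow Gopakumar--Vafa's physical derivation and attempt to realize $n_g^{\beta,conj}$ as integer spin multiplicities of BPS D2--D0 bound states on $X$. Let $M(\beta,n)$ be the moduli space of Gieseker-stable one-dimensional sheaves $F$ with $[F]=\beta$ and $\chi(F)=n$, and let $\pi\colon M(\beta,n)\to\Chow_1(X,\beta)$ send $F$ to its support cycle. The strategy is to equip a perverse-sheaf enhancement of $R\pi_{\ast}\mathbb{Q}_{M(\beta,n)}$ with an $sl_2\oplus sl_2$-action, where one factor arises from relative hard Lefschetz for $\pi$ and the other from a polarization on $\Chow_1(X,\beta)$; the integer $n_g^\beta$ is then defined as the multiplicity of the representation $[(1/2)+2(0)]^{\otimes g}$ under the left factor. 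To match the left-hand side of the identity, I would invoke the GW/DT correspondence of Maulik--Nekrasov--Okounkov--Pandharipande together with the stable-pair description of Pandharipande--Thomas, reducing the comparison to a BPS-type formula for PT invariants which I would attack by wall-crossing in $D^b(X)$, using the Bridgeland stability conditions and Joyce's counting-invariant machinery advertised in the introduction.

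The main obstacle is the construction of the $sl_2\oplus sl_2$-action together with its integrality. Since $M(\beta,n)$ is typically singular and carries only a symmetric perfect obstruction theory, ordinary hard Lefschetz is not directly available and one must work with a virtual or vanishing-cycle refinement of $R\pi_{\ast}\mathbb{Q}$; the existence of the second $sl_2$-action on such a refinement is not obvious. A related difficulty is deformation-independence: the integers so produced depend a priori on $n$, and their independence of $n$ (forced by the Calabi--Yau condition) requires a separate wall-crossing argument akin to the proof of the DT/PT equivalence. Finally, even granting this, the GW/DT matching half of the argument is itself known unconditionally only in special cases, so a complete proof of the conjectured identity in the full generality written is beyond the reach of current techniques; as the introduction indicates, the paper will accordingly implement the integer-defining motivic half of this program and deduce from it the birational invariance of the resulting invariants.
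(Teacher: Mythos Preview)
The statement you are addressing is labeled a \emph{Conjecture} in the paper and is not proved there; indeed the paper explicitly writes that ``the definition of $n_g^{\beta}$ also does not involve virtual classes, so we do not claim that $n_g^{\beta}$ satisfy Conjecture~\ref{conj}.'' The conjecture is quoted from Gopakumar--Vafa as background and motivation only: the paper's contribution is to define a separate integer $n_g^{\beta}(X)$ via a motivic refinement of the Hosono--Saito--Takahashi construction and to prove its invariance under birational maps (Theorem~\ref{result}), not to establish the Gromov--Witten identity above. There is therefore no proof in the paper against which to compare your attempt.

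That said, your part (a) is correct and standard: the triangular shape of the expansion of $(2\sin(k\lambda/2))^{2g-2}$ in $\lambda$ does yield a unique recursive determination of rational $n_g^{\beta,conj}$, so the content of the conjecture is entirely in integrality. Your part (b) is a fair summary of the expected route --- an $\mathfrak{sl}_2\times\mathfrak{sl}_2$-action on a suitable (perverse, vanishing-cycle) sheaf over the Chow base, combined with GW/DT/PT comparisons --- and you correctly isolate the genuine obstacles (existence of the second $\mathfrak{sl}_2$ on a virtual refinement, independence of $n$, and the conditional status of GW/DT). But as you yourself conclude, this is a research program rather than a proof; the paper makes no claim to carry it out, and neither does your proposal. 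The only actionable correction is to recognize that no proof was expected here.
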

 They also claimed that the integer $n_{g}^{\beta,conj}$ 
 should be defined by 
 the ``virtual counting of genus $g$ Jacobians'' in
 the moduli space of the BPS-branes 
 wrapping  
 around holomorphic curves in $X$, 
 and some computations are done in~\cite{KKV}.
 Its mathematical proposal by Hosono, Saito
 and Takahashi~\cite{HST} uses the relative Lefschetz action 
 on the intersection cohomology of the moduli space 
 of one dimensional semistable sheaves $E$ with 
 \begin{align}\label{ch}(\ch_2(E), \ch _3(E))=(\beta, 1), \end{align}
 which we denote by $M^{\beta}$. 
 Using an $\mathfrak{sl}_2\times \mathfrak{sl}_2$
 -action on $IH^{\ast}(\widetilde{M}^{\beta})$ where 
  $\widetilde{M}^{\beta} \to M^{\beta}$ is the normalization,  
 they defined an invariant
 $\tilde{n}_g^{\beta} \in \mathbb{Z}$
 and conjectured that the invariants $\tilde{n}_g^{\beta}$ 
 satisfy Conjecture~\ref{conj}. 
 
 However it seems that the invariants $\tilde{n}_g^{\beta}$ 
 are unlikely 
 to be deformation invariant, hence not 
 equal to $n_{g}^{\beta, cong}$ exactly, since 
the definition of $\tilde{n}_g^{\beta}$ 
 does not involve virtual classes.  
 Now there is another approach of Gopakumar-Vafa invariants 
 using the notion of stable pairs, 
 proposed by Pandharipande and Thomas~\cite{PT}.

 \subsection{Main result}
 Based on the work~\cite{HST}, we will 
 construct invariants, (cf. Definition~\ref{mgd})
 $$n_g^{\beta}(X) \in \mathbb{Z}, \quad \mbox{ for }
 g\ge 0, \ \beta \in N_1(X),$$
 as a refinement of $\tilde{n}_g^{\beta}$. Roughly 
 speaking $n_g^{\beta}(X)$ is defined using  
 a certain motivic invariant, similar to the virtual Poincar\'e
 polynomial of $M^{\beta}$. 
 Furthermore $n_g^{\beta}(X)$ is also defined 
 for a non-effective one cycle class $\beta$. 
 At least $n_g^{\beta}(X)$ 
 coincides with $\tilde{n}_g^{\beta}$
 if the moduli space $M^{\beta}$ is smooth
 and $\beta$ is represented by an effective one cycle.   
 The following is our main theorem. 
 \begin{thm}\label{result}
 Let $\phi \colon W\dashrightarrow X$ be a birational 
 map between smooth projective Calabi-Yau 3-folds. 
 Then we have 
 $$n_{g}^{\beta}(W)=n_{g}^{\phi_{\ast}\beta}(X),$$
 for all $g\ge 0$ and $\beta \in N_1(W)$. 
 \end{thm}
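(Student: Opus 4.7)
My plan is to reduce to the case of a single flop, then transport the counting problem across a Fourier--Mukai equivalence, and finally use a wall-crossing argument inside the space of Bridgeland stability conditions to reconcile the two sides. First I would invoke Kawamata's theorem that any birational map between smooth projective Calabi--Yau 3-folds factors as a sequence of flops; the conclusion of Theorem~\ref{result} is clearly preserved under composition provided it holds for each elementary flop, so I may assume $\phi \colon W \dashrightarrow X$ is a flop. Bridgeland's flop theorem then furnishes a Fourier--Mukai equivalence $\Phi \colon D^b(W) \simto D^b(X)$ whose induced map on numerical Grothendieck groups matches $\phi_{\ast}$ on $N_1$, so the two sides of the desired equality are defined for corresponding classes.

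Second, I would re-express $n_g^\beta(W)$ and $n_g^{\phi_\ast \beta}(X)$ as invariants intrinsic to the triangulated categories. The motivic $\tilde n_g^\beta$-type construction of \cite{HST} extracts $n_g^\beta$ from a motivic invariant of the moduli space $M^\beta$ of one-dimensional semistable sheaves, and in Joyce's framework such a motivic invariant is computed by a stacky integral over the moduli stack of $\sigma$-semistable objects in a heart $\aA \subset D^b(W)$, where $\sigma$ is any stability condition realising slope stability on $\Coh_{\le 1}(W)$. I would transport this data along $\Phi$: the heart $\Phi(\aA)$ sits inside $D^b(X)$, and the transported stability condition $\Phi_\ast\sigma$ makes the moduli stacks and their motivic invariants tautologically equal. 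The question is then to compare $\Phi_\ast\sigma$ with a stability condition on $X$ that sees $\Coh_{\le 1}(X)$ with its usual slope stability.

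Third, I would connect these two stability conditions by a path in $\Stab(D^b(X))$ and apply Joyce's wall-crossing formula for motivic counting invariants. The virtual Poincar\'e-type motivic invariant behind $n_g^\beta$ is additive on stratifications and multiplicative on affine fibrations, so Joyce's integration map is well-defined and his wall-crossing identity applies. Crucially, the Gopakumar--Vafa extraction of $n_g^\beta$ from the motivic data is designed to be insensitive to such wall-crossing: the contributions of strictly semistable objects on intervening walls are polynomial in lower-charge motivic invariants, and taking the log of the generating series absorbs them. Combined with the equality established in the second step, this gives $n_g^\beta(W) = n_g^{\phi_\ast\beta}(X)$.

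The main obstacle is the third paragraph: $\Phi$ does not carry $\Coh(W)$ to $\Coh(X)$, so $\Phi(\aA)$ is a genuinely perverse heart, and the two moduli stacks being compared are neither isomorphic nor in any geometric bijection. The invariance must be derived purely from the motivic identities provided by Joyce's wall-crossing, and it relies on the fact that the definition of $n_g^\beta$ factors through exactly the invariants for which these identities hold. A subsidiary difficulty is to ensure, for each concrete flop, that a path in $\Stab(D^b(X))$ between $\Phi_\ast \sigma$ and the $\Coh_{\le 1}(X)$-slope chamber actually exists and crosses only locally finite walls; this is where the Calabi--Yau 3-fold and one-dimensional hypotheses are used to keep the wall-and-chamber structure tractable.
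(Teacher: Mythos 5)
Your outline follows the same overall route as the paper---reduction to a single flop via Kawamata, Bridgeland's flop equivalence $\Phi$ compatible with $\phi_{\ast}$, and Joyce's change-of-stability machinery---but the two ingredients that actually make the argument work are missing, and one of your stated mechanisms is wrong. Joyce's formula expresses $\epsilon^{(v,\phi)}(\sigma)$ as $\epsilon^{(v,\phi)}(\tau)$ plus a finite sum of multiple commutators of the $\epsilon^{(v_i,\phi_i)}(\tau)$; these correction terms do not disappear by ``taking the log of the generating series'' (the $\epsilon$'s already are the logarithm of the $\delta$'s), and in general counting invariants genuinely change across walls. What kills the corrections here is specific to the situation: after pushing forward to the Chow variety and applying the relative motivic invariant, a commutator $[f_1,f_2]$ contributes differences of the form $\mathbb{L}^{\dim \Ext^1(E_2,E_1)-\dim\Hom(E_2,E_1)}-\mathbb{L}^{\dim \Ext^1(E_1,E_2)-\dim\Hom(E_1,E_2)}$, which vanish because $\chi(E,F)=0$ for all $E,F\in\dD_X$ (Serre duality plus Riemann--Roch, using $\ch_0=\ch_1=0$), together with the fact that $s(E)=s(E_1\oplus E_2)$ for any extension, so the $1$-morphism to the Chow variety cannot distinguish extensions from direct sums. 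This is exactly Lemma~\ref{gensitu} and Sublemma~\ref{cy} in the paper, and without it your third paragraph is an assertion rather than a proof.

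Second, you treat the invariant as an absolute motivic invariant of a moduli stack, but $n_g^{\beta}$ for $g\ge 1$ is extracted from a \emph{relative} invariant: the two-variable polynomial $P(v,\sigma)$ records the Jordan-cell data of the $e_R$-action attached to the morphism $\pi\colon \mathfrak{Coh}^v(X)\to \Chow_{\beta}(X)$. Since $\Phi$ does not preserve coherent sheaves, the transported stack on the $X$-side parametrizes objects of a perverse heart and has no natural map to $\Chow_{\phi_{\ast}\beta}(X)$; the paper gets around this by extending the definition of $P(v,\sigma)$ at boundary points of $\overline{U}_X$ using $E\mapsto s(\dR f_{\ast}E)\in \Chow_{f_{\ast}\beta}(Y)$, comparing this with the interior definition via the finite morphism $f_{\ast}\colon \Chow_{\beta}(X)\to \Chow_{f_{\ast}\beta}(Y)$ (Lemma~\ref{finite}), and only then matching the $W$- and $X$-sides over the common base $Y$ through the perverse hearts $\iPPer(\dD_W)\simeq\oPPer(\dD_X)$. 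Your ``tautological equality'' after transport does not by itself connect to the definition of $n_g^{\phi_{\ast}\beta}(X)$, and the wall-crossing step must be carried out for the relative invariant over the Chow variety (where one also needs openness of the perverse hearts, boundedness, and that the boundary stability conditions actually lie in $\overline{U}_X$), not merely for an absolute motivic count.
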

 It is worth mentioning that 
 in the proof of Theorem~\ref{result}, 
 putting $\beta =0$ would 
 result Theorem~\ref{first}. 
 (cf. Remark~\ref{betti}.)
 The definition of $n_g^{\beta}$ 
 also does not involve virtual classes, so we do not claim that
 $n_g^{\beta}$ satisfy Conjecture~\ref{conj}. 
  However we have obtained 
  a certain mathematical approximation of Gopakumar-Vafa invariants, 
 which have birational invariance property.  
 
 \subsection{Strategy of the proof of Theorem~\ref{result}}
 We use the notion of stability conditions on 
 triangulated categories introduced by T.~Bridgeland~\cite{Brs1},
 based on M.~Douglas's work on $\Pi$-stability~\cite{Dou1}, \cite{Dou2}. 
 Roughly speaking a stability condition on 
 a triangulated category $\dD$ consists of data 
 $$Z\colon K(\dD) \lr \mathbb{C}, \quad \pP(\phi)\subset \dD,$$
 where $Z$ is a group homomorphism and $\pP(\phi)$ is a subcategory 
 for each $\phi \in \mathbb{R}$, which satisfy
 some axioms. (cf. Definition~\ref{stde}.) 
 We work over the triangulated category $\dD =\dD_X$
 defined by 
 $$\dD _X \cneq \{ E\in D^b(\Coh(X)) \mid \dim \Supp(E)\le 1\}.$$ 
 In terms of string theory, the set of objects 
 $\{\pP(\phi)\mid \phi \in \mathbb{R}\}$
 is supposed to represent the set of BPS-branes at some 
 point of the so called 
 \textit{stringy K$\ddot{\textrm{a}}$hler moduli space}, 
 the subspace of the moduli space of $\nN=2$ super conformal 
 field theories.  
 Indeed Bridgeland~\cite{Brs1} showed that the 
 set of good stability conditions $\Stab(X)$ is a complex 
 manifold, and expected that it describes the 
 stringy K$\ddot{\textrm{a}}$hler moduli space mathematically. 
 In this paper, we will construct a connected open subset, (cf. Lemma~\ref{op})
 $$U_X \subset \Stab(X),$$
 which corresponds to the \textit{neighborhood of the large 
 volume limit} in string theory. 
 Then our invariant $n_g^{\beta}(X)$ is defined as a
 certain counting invariant of objects $E\in \pP(\phi)$ 
 which satisfy (\ref{ch}) with respect to some 
 point $(Z,\pP)\in U_X$. 
 
 Next let us consider a birational map $\phi \colon W\dashrightarrow X$
 from another Calabi-Yau 3-fold $W$. Then due to Bridgeland~\cite{Br1}, 
 we have the equivalence of triangulated categories, 
 $$\Phi \colon \dD _W \lr \dD _X, $$
 which gives an isomorphism $\Phi _{\ast}\colon \Stab(W) \to \Stab(X)$. 
 We claim that the closures of
 $\Phi_{\ast}U_W$ and $U_X$ intersect, in particular they
 are contained in the same 
 connected component of $\Stab(X)$. (cf. Lemma~\ref{pt}.)
 Then Question~\ref{quest} is rephrased as follows:
 \begin{question}\label{quest2}\emph{
 How do counting invariants vary by changing stability 
 conditions, from $\sigma \in U_X$ to 
 $\tau \in \Phi_{\ast}U_W$ ? }
 \end{question}
 Now we use D.~Joyce's theory on configurations on
 abelian categories and counting invariants of 
 semistable objects~\cite{Joy1}, \cite{Joy2}, \cite{Joy3}, \cite{Joy4}.
 Especially in~\cite{Joy4}, he studies how counting invariants 
 of semistable objects
 vary 
 under change of stability conditions.
  Although his works focus on 
 stability conditions on abelian categories, his 
 arguments also apply in our case. 
 The reason is as follows: roughly speaking, a theory of stability 
 conditions on abelian categories corresponds to a 
 local theory on $\Stab(X)$. Thus Joyce's works enable us to
  study how counting invariants vary 
 locally, and actually we will see they do not vary at all.  
 Obviously we can answer Question~\ref{quest2}, 
 and conclude Theorem~\ref{result} by this argument. 
 
 The content of this paper is as follows. In Section~\ref{review}
 we review the mathematical proposal of 
 Gopakumar-Vafa invariants in~\cite{HST}. 
 In Section~\ref{Stab} we review Bridgeland's work 
 on stability conditions on triangulated categories~\cite{Brs1}, 
 and construct some stability conditions we need. 
 In Section~\ref{Mot} we introduce our invariant 
 $n_g^{\beta}(X)$, and prove Theorem~\ref{result} 
 in Section~\ref{Bir}. In Section~\ref{tech} we 
 prove some technical lemmas. 
 
 \subsection{Terminology used in this paper}
 In this paper, all the varieties are defined over 
 $\mathbb{C}$. 
 We say $X$ is Calabi-Yau if $X$ is smooth projective with 
 trivial canonical bundle. 
 For a variety $X$, we denote 
 by $D(X)$ the derived category of coherent sheaves on $X$. 
 For a triangulated category $\dD$, its $K$-group is 
 denoted by $K(\dD)$. 
 We use the following standard terminology
  used in birational geometry~\cite{KM}, 
 $$
 N^1(X)\cneq \bigoplus _{D\subset X}\mathbb{R}D/\equiv, \quad
 N_1(X)\cneq\bigoplus_{C\subset X} \mathbb{R}C/\equiv.$$
 In the definition of $N^1(X)$, 
  $D\subset X$ is a divisor and $D_1 \equiv D_2$
  if and only if $D_1\cdot C=D_2 \cdot C$ for any curve $C$ on $X$. 
  Similarly in the definition of $N_1(X)$, $C$ is a curve on $X$ and 
  $C_1 \equiv C_2$ if and only if $D\cdot C_1 =D\cdot C_2$
  for any divisor $D$. 
  Cleary we have the perfect pairing, 
  $$N^1(X) \times N_1(X) \ni (D,C) \longmapsto D\cdot C\in \mathbb{R},$$
  which identifies $N_1(X)$ with the dual of $N^1(X)$. 
  We set $N^1(X)_{\mathbb{C}}=N^1(X)\otimes _{\mathbb{R}}\mathbb{C}$ and 
  \begin{align*}
  \overline{NE}(X)&\cneq \overline{\{\mbox{Cone of effective curves}\}}\subset
  N_1(X), \\
  A(X)_{\mathbb{C}}&\cneq \{ B+i\omega \in N^1(X)_{\mathbb{C}} \mid 
  \omega \mbox{ is ample }\} \subset N^1(X)_{\mathbb{C}}. 
  \end{align*}  
  Suppose that a birational map
   $\phi \colon W\dashrightarrow X$ is an isomorphism in codimension
   one. We use the following isomorphisms, 
   $$\phi _{\ast}\colon N^1(W) \lr N^1(X), \quad \phi_{\ast}\colon 
   N_1(W) \lr N_1(X),$$
   where the LHS is the strict transform and the RHS
   is the inverse of the dual of the LHS. 
 For a non-zero $\beta \in N_1(X)$, 
 $\Chow_{\beta}(X)$ is the subvariety of the Chow variety 
 $\Chow (X)$,  
 representing effective one cycles on $X$ with numerical class $\beta$. 
 One can refer~\cite[Chapter 1, Section 3]{Ko} for the existence of the variety $\Chow_{\beta}(X)$. 
 We set $\Chow_{\beta}(X)=\Spec \mathbb{C}$ when $\beta =0$. 
 For a coherent sheaf $E$ on $X$ with 
 $\dim \Supp(E)\le 1$, we set
 \begin{align}\label{cycle}
 s(E)\cneq 
  \sum _{p\in X}\length _{\oO _{X,p}}(E _p)\overline{\{p\}} 
   \in \Chow _{\beta}(X),\end{align}
   where $\beta=\ch _2(E)$ and
   $p$ runs through all the codimension two points.
   
   Let $X$ be a Calabi-Yau 3-fold. 
   For an object $E\in \dD _X$ and $v=(\beta, k)\in N_1(X)\oplus \mathbb{Z}$, 
   we say $E$ is of numerical type $v$ if 
   $$(\ch _2(E), \ch _3(E))=(\beta, k).$$

 \section{Review of work of Hosono, Saito and 
 Takahashi}\label{review}
 In this section, we briefly review the work of
 Hosono, Saito and Takahashi~\cite{HST}. 
 \subsection{Representations of $\mathfrak{sl}_2$}\label{rep}
 First let us recall that the Lie algebra $\mathfrak{sl} _2$ is generated by 
 three elements, 
 $$e=
 \left(\begin{array}{cc}0 & 1 \\ 0 & 0 \end{array}\right), \quad 
 f=\left(\begin{array}{cc}0 & 0 \\ 1 & 0 \end{array}\right), \quad
 h=\left(\begin{array}{cc}1& 0 \\ 0 & -1 \end{array}\right),$$
 which satisfy the relation, 
 $$[e, f]=h, \quad [h, e]=2e, \quad [h, f]=-2f.$$
 For each $j\in \frac{1}{2}\mathbb{Z}$, there is a unique 
 irreducible representation of $\mathfrak{sl}_2$ (up to isomorphism) of 
 dimension $2j+1$, called the \textit{spin $j$ representation},
  and denoted by $(j)$. 
  For $V=(j)$, there is an 
  eigenvector $v\in V$ of $h$ such that $fv=0$ and 
  $$(j)=<v, ev, \cdots, e^{2j}v>, \quad e^{2j+1}v=0,$$
  with $he^k v=(-2j+2k)v$, $0\le k\le 2j$.  
  
  Let $X$ be a normal projective variety and $IH^{\ast}(X)$
  is the intersection cohomology of $X$ introduced in~\cite{BBD}.
   Note that if $X$ is connected and smooth, 
  we have 
  \begin{align*}
  IH^i(X)= H^{i+\dim X}(X, \mathbb{C}),\end{align*}
  for any $i\in \mathbb{Z}$. 
  Let $H$ be an ample divisor on $X$, and $\eta$ be the 
  Lefschetz operator, 
  $$  \eta= H \wedge  
  \colon IH^{\ast}(X) \to IH^{\ast +2}(X).$$
  It is well-known that the operator $\eta ^i \colon IH^{-i}(X)\to 
  IH^{i}(X)$ is an isomorphism. (cf.~\cite[Theorem 5.4.10, 6.2.10]{BBD}.)
   Using this, one can construct
  an $\mathfrak{sl} _2$-action on $IH^{\ast}(X)$ as follows. 
  (See~\cite[Section 2]{SS} for the detail.)
  First we find a homogeneous basis of $IH^{\ast}(X)$ which 
  consist of primitive elements. 
   Here $v\in IH^{-i}(X)$ for $i\ge 0$ is primitive 
  if $\eta ^{i+1}v=0$. 
  For such a basis $\{v_{\alpha}\}$,   
 $IH^{\ast}(X)$ is written as a direct sum 
  of the subspaces generated by 
  \begin{align}\label{spin}
  \{v_{\alpha}, \eta v_{\alpha}, \cdots, 
  \eta^{i_{\alpha}}v_{\alpha}\},\end{align}
   with $\deg v_{\alpha}=-i_{\alpha}$. 
  Then define the representation of $\mathfrak{sl}_2$ on $IH^{\ast}(X)$ 
  by letting $e\in \mathfrak{sl}_2$ act as $\eta$, $h\in \mathfrak{sl}_2$ act as multiplying 
  by the degree and the action of $f\in \mathfrak{sl}_2$ is defined 
  inductively from the requirement of $fv_{\alpha}=0$. 
  Hence the subspace (\ref{spin}) gives 
  spin $(\frac{i_{\alpha}}{2})$-representation.
  For a complex torus $T$ of dimension $g$, 
   we have the following formula. 
   (See~\cite[Section~2]{HST}.)
  $$IH^{\ast}(T)=I_g \cneq \left[ \left(\frac{1}{2}
 \right) \oplus 2(0) \right] ^{\otimes g}.$$
\begin{lem}\label{choice}
The 
$\mathfrak{sl}_2$-representation type of $IH^{\ast}(X)$
does not depend on a choice of an ample divisor $H$.
\end{lem}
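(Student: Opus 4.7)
The plan is to identify the multiplicity of each irreducible summand of $IH^{\ast}(X)$ with a number that manifestly does not involve $H$, namely a difference of dimensions of the graded pieces of $IH^{\ast}(X)$. Since any finite-dimensional representation of $\mathfrak{sl}_2$ is semisimple and determined up to isomorphism by the multiplicities of the irreducible spin-$j/2$ representations appearing in it, it is enough to show that each such multiplicity is independent of the choice of ample divisor $H$.

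Fix an ample divisor $H$ with associated Lefschetz operator $\eta=\eta_H$, and set
$$P_H^{-i} \cneq \ker\!\left(\eta^{i+1}\colon IH^{-i}(X)\lr IH^{i+2}(X)\right),$$
the primitive part in degree $-i$. From the construction of the representation recalled just before the lemma, the multiplicity of the spin-$(i/2)$ representation in $IH^{\ast}(X)$ is precisely $p_i(H)\cneq \dim P_H^{-i}$, because a basis of $P_H^{-i}$ gives the primitive vectors generating the copies of $(i/2)$ in the decomposition (\ref{spin}).

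The main step is to invoke the Lefschetz decomposition, which holds for any ample $H$ as a consequence of the hard Lefschetz isomorphism $\eta^{i}\colon IH^{-i}(X)\simto IH^{i}(X)$ already quoted in the paper:
$$IH^{-i}(X)=\bigoplus_{k\ge 0}\eta^{k}P_H^{-i-2k}.$$
Taking dimensions yields $\dim IH^{-i}(X)=\sum_{k\ge 0}p_{i+2k}(H)$, and subtracting the corresponding identity for $i+2$ gives
$$p_i(H)=\dim IH^{-i}(X)-\dim IH^{-i-2}(X).$$
The right hand side depends only on the graded vector space $IH^{\ast}(X)$, hence is independent of $H$.

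There is no real obstacle in this argument beyond invoking hard Lefschetz for intersection cohomology, which the paper already cites. The only point to be careful about is that the Lefschetz decomposition is the correct bookkeeping tool, so that the $p_i(H)$ are recovered as alternating differences of the $\dim IH^{-i}(X)$; once this is set up, the conclusion is immediate.
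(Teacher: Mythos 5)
Your argument is correct. Both your proof and the paper's rest on the same underlying principle, namely that the $\mathfrak{sl}_2$-representation type is determined by the graded vector space $IH^{\ast}(X)$ alone, but the mechanics differ: the paper compares the two operators $\eta$ and $\eta'$ directly, by an induction on the filtration $T_j \subset IH^{\ast}(X)$ spanned by the spin $\ge j$ parts, reading off at each stage the minimal degree and dimension of the quotient $IH^{\ast}(X)/T_j$; you instead invoke the primitive (Lefschetz) decomposition, which is a formal consequence of the hard Lefschetz isomorphism $\eta^{i}\colon IH^{-i}(X)\to IH^{i}(X)$ and of complete reducibility, to get the closed formula
$$
\dim P_H^{-i}=\dim IH^{-i}(X)-\dim IH^{-i-2}(X)
$$
for the multiplicity of the spin-$(i/2)$ summand, which is manifestly independent of $H$. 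Your route is more direct and non-inductive, and as a bonus the explicit formula you obtain is exactly the one the paper uses later (in the example computing $\nu_{l}^{\alpha}$ for $[(A,\mathrm{id})]$, where the multiplicities appear as differences of Betti numbers); the paper's induction, on the other hand, never needs to write down the primitive decomposition and works purely with the two filtrations. The only points you should make explicit are the standard facts you are implicitly using: that a weight vector $v\in IH^{-i}(X)$ satisfies $fv=0$ if and only if $\eta^{i+1}v=0$, so that $P_H^{-i}$ really is the space of lowest weight vectors of weight $-i$, and that the decomposition $IH^{-i}(X)=\bigoplus_{k\ge 0}\eta^{k}P_H^{-i-2k}$ follows from hard Lefschetz by the usual induction; both are routine, so there is no gap.
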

\begin{proof} 
Let $H'$ be another ample divisor, and 
consider the operator $\eta'(\ast)=H' \wedge \ast$. 
Let
$T_j, T_j' \subset IH^{\ast}(X)$
be the sub $\mathfrak{sl}_2$-representations
with respect to the operators $\eta, \eta'$ respectively, 
 consisting of direct sums of 
spin $(j')$-representations for $j' \ge j$. 
 Suppose that
 $T_j$, $T'_{j}$ 
have the same $\mathfrak{sl}_2$-represention types.
Then the minimal degrees of the following graded vector spaces, 
\begin{align}\label{minimal}IH^{\ast}(X)/T_j, \quad 
IH^{\ast}(X)/T_{j}',\end{align}
 are same, say $d\in (-2j, 0]$. 
   Also the 
   subspaces of degree $d$ elements in (\ref{minimal}) have 
   the same dimensions, say $l$. 
Then we see
$$T_{-\frac{d}{2}}\cong T_{j}\oplus \left( -\frac{d}{2} \right)^{\oplus l}, 
\quad T_{-\frac{d}{2}}'\cong T_{j}'\oplus
 \left( -\frac{d}{2} \right)^{\oplus l},$$
 as $\mathfrak{sl}_2$-representations. 
 Therefore $T_{-\frac{d}{2}}$ and $T_{-\frac{d}{2}}'$ have 
 the same $\mathfrak{sl}_2$-representation types. 
 By the induction we obtain the lemma. 
\end{proof}

   \subsection{Relative Lefschetz actions}\label{relative}
  Let $f\colon X\to A$ be a projective morphism between 
  normal projective varieties. 
  The idea of~\cite{HST} is to define the 
  $(\mathfrak{sl}_2)_{L}\times (\mathfrak{sl}_2)_{R}$-action 
  on $IH^{\ast}(X)$ using the Lefschetz operators in 
  fiber directions and base directions. 
   Let $H_A$, $H_{X/A}$ be an
  ample divisor on $A$, a relative ample divisor on $X$
  over $A$ respectively.
  We denote by $D(\mathbb{C}_X)$, $\mathrm{Perv}(\mathbb{C}_X)$
  the derived category of constructible sheaves on $X$
  (with its classical topology), 
  the heart of the middle perverse t-structure on $D(\mathbb{C}_X)$
  respectively. 
   We have the perverse Leray spectral 
  sequence, 
  $$E_2 ^{r,s}= H^r (A,  {^{p}R^{s}}f_{\ast}\iI C _X) 
  \Rightarrow IH^{r+s}(X), $$
  where $\iI C _X \in \mathrm{Perv}(\mathbb{C}_X)$
   is the intersection complex on $X$, and 
  ${^{p}R^{s}}f_{\ast}\iI C _X \in \mathrm{Perv}(\mathbb{C}_A)$
   is the $s$-th cohomology
   of $\dR f_{\ast}\iI C_X$ 
   with respect to the middle perverse
   t-structure on $D(\mathbb{C}_A)$.  
   It is known that the
    above spectral sequence degenerates at $E_2$-terms
    (cf.~\cite[Theorem 6.2.5]{BBD}), and we have 
   two operators, 
   $$
   \eta _L=H_{X/A} \wedge  \colon E_2 ^{r,s} \lr E_2 ^{r, s+2}, \quad
   \eta _R= H_A \wedge \colon E_2 ^{r,s} \lr E_2 ^{r+2,s},
   $$
   such that $\eta _L ^{s}\colon E_2^{r,-s}\stackrel{\cong}{\to}E_2^{r,s}$
   and $\eta _R ^{r} \colon E_2^{-r,s} \stackrel{\cong}{\to}E_2^{r,s}$. 
  As in Paragraph~\ref{rep}, these two actions define 
  an $(\mathfrak{sl}_2)_{L}\times (\mathfrak{sl}_2)_{R}$-action on
   $IH^{\ast}(X)$. (cf.~\cite[Corollary 2.1]{HST}.)
  Also by the same argument of Lemma~\ref{choice}, the 
  $(\mathfrak{sl}_2)_{L}\times (\mathfrak{sl}_2)_{R}$-representation type
  of $IH^{\ast}(X)$
  does not depend on $H_{X/A}$, $H_A$. 
  
  \subsection{Definition of HST (Hosono, Saito, Takahashi)
   invariants}\label{DGV}
  Let $X$ be a projective variety. 
  For $\beta \in N_1(X)$ and an ample 
  divisor $H$ on $X$, let $M^{\beta}$ be the 
  moduli space of $H$-Gieseker semistable sheaves $E$ on $X$
  (cf.~\cite{Hu}),
  pure of dimension one, 
  with numerical type $(\beta, 1)$. 
  Let $\widetilde{M}^{\beta}\to M^{\beta}$ be the 
  normalization. 
  By the same argument as in~\cite[Chapter 5, Section 4]{Mum}, 
  there is a natural map, 
  \begin{align*}
  \pi _{\beta}\colon M^{\beta}\ni E \longmapsto 
  s(E)
   \in \Chow_{\beta} (X),\end{align*}
   and
   let $S^{\beta}$ the normalization of the image of $\pi _{\beta}$. 
   The induced morphism $\pi _{\beta}\colon \widetilde{M}^{\beta} 
   \to S^{\beta}$
   is projective, hence defines an $(\mathfrak{sl}_2)_{L}\times (\mathfrak{sl}_2)_{R}$-action 
   on $IH^{\ast}(\widetilde{M} ^{\beta})$. One can rearrange its action 
   in the following formula, (cf.~\cite[Theorem 2.4]{HST}) 
   $$IH^{\ast}(\widetilde{M} ^{\beta})= \bigoplus _{g\ge 0}I_g \otimes 
   R_g(\beta), $$
   where $R_g(\beta)$ is a virtual $(\mathfrak{sl}_2)_{R}$-representation. 
   \begin{defi}\emph{\bf{\cite[Definition 3.6]{HST}}}\label{gvd}
   We define $\tilde{n}_{g}^{\beta}$ to be 
   \begin{align}\label{ng}
   \tilde{n}_{g}^{\beta}= \sum_j (-1)^{2j}(2j+1)\cdot N_j
   \in \mathbb{Z},\end{align}
   after writing $R_g(\beta)=\sum _{j}N_j \cdot (j)_{R}$ as a 
   virtual representation. 
   \end{defi}
   \begin{rmk}\emph{
   In~\cite[Definition 3.6]{HST}, the invariant $\tilde{n}_g^{\beta}$ 
   is defined as $\mathrm{Tr}_{R_g(\beta)}(-1)^{h_R}$, 
   which coincides with the formula (\ref{ng}). } 
   \end{rmk}
   
   \begin{rmk}\emph{As pointed out in~\cite{PT}, 
   the invariants $\tilde{n}_g^{\beta}$ are unlikely to be
   BPS-invariants discussed in~\cite{GV}. 
   We need to involve virtual classes to 
   define appropriate BPS-counting which are deformation invariant. 
    }   \end{rmk}
   There is an alternative way of defining $\tilde{n}_{g}^{\beta}$ 
   pointed out by~\cite{SS}, and it is much 
   more useful for our purposes. 
   Let $V$ be the space of an 
   $(\mathfrak{sl}_2)_{L}\times (\mathfrak{sl}_2)_{R}$-representation. 
   Then the operator $h_L +h_R$ defines the grading 
   $V=\oplus V_n$ with $e_RV_n \subset V_{n+2}$. 
   We can decompose $V$ into the direct sum of 
   the vector subspaces spanned by 
   $$v, e_Rv, \cdots, e_R ^{l-1}v,$$
   where $v\in V_{\alpha}$ for some $\alpha$, $e_R ^l v=0$, 
   and there is no $v' \in V$ with $e_R v'=v$. 
   Such a subspace is called a \textit{Jordan cell of size $l$ and 
   minimal degree $\alpha$}. Let $\nu _{l}^{\alpha}\in \mathbb{Z}_{\ge 0}$ be 
   \begin{align}\label{nu}
   \nu _l ^{\alpha}= \sharp \{ \mbox{Jordan cells of size }l \mbox{ and
   minimal degree }\alpha \mbox{ in }V\}.\end{align}
   Note that $\nu_l^{\alpha}$ depends only on the
   $e_{R}$-action and the grading induced by $h_L+h_R$. 
   We have the following. 
   \begin{prop}\label{alt}\emph{\bf{\cite{SS}}}
   For $V=IH^{\ast}(\widetilde{M} ^{\beta})$, we have 
   $$\tilde{n}_{g}^{\beta}= 
   \sum _{\alpha +l \ge 1}(-1)^{\alpha +g}l\nu _{l}^{\alpha}
   \left\{ \left( \begin{array}{c}\alpha +l+g \\ 2g +1 \end{array} \right)
   -\left( \begin{array}{c}\alpha +l+g-2 \\ 2g +1 \end{array} \right)
   \right\}.$$
   \end{prop}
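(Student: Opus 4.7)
My plan is to compute the Jordan cell data $\nu_l^\alpha$ of $V = IH^{\ast}(\widetilde{M}^\beta)$ explicitly using the HST decomposition
\[
V = \bigoplus_{g' \ge 0} I_{g'} \otimes R_{g'}(\beta)
\]
recalled in Paragraph~\ref{DGV}, and then to verify the claimed formula coefficient-by-coefficient against the multiplicities $N_{j'}^{g'}$ defined by $R_{g'}(\beta) = \sum_{j'} N_{j'}^{g'} (j')_R$. Since $e_R$ acts only on the right tensor factor and $(j')_R$ is a single $e_R$-Jordan block of length $2j'+1$, each summand $I_{g'} \otimes (j')_R$ decomposes into $\dim I_{g'}$ Jordan cells of common size $l = 2j'+1$, indexed by an $h_L$-eigenbasis of $I_{g'}$: a weight vector of $h_L$-weight $w$ produces a cell of minimal $(h_L+h_R)$-degree $\alpha = w - (l-1)$. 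Using $I_{g'} = [(\tfrac{1}{2}) \oplus 2(0)]^{\otimes g'}$, whose $h_L$-weight multiplicities are $\binom{2g'}{g'-w}$, one obtains
\[
\nu_l^\alpha = \sum_{g' \ge 0} N_{(l-1)/2}^{g'} \binom{2g'}{g' - \alpha - l + 1}.
\]

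Plugging this into the right-hand side of the proposition, interchanging the (finite) sums, and reindexing with $u = \alpha + l - 1$ (so the restriction $\alpha + l \ge 1$ becomes $u \ge 0$), the various powers of $(-1)$ and the factor $l = 2j'+1$ rearrange so that matching the coefficients of each $N_{j'}^{g'}$ on the two sides reduces the entire proposition to the single combinatorial identity
\[
(-1)^g \sum_{u \ge 0} (-1)^u \binom{2g'}{g' - u}\left\{ \binom{u+g+1}{2g+1} - \binom{u+g-1}{2g+1}\right\} = \delta_{g, g'},
\]
valid for all $g, g' \ge 0$ under the convention $\binom{n}{k} = 0$ when $n < 0$ or $n < k$.

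The main obstacle is establishing this last identity, and for this I intend a direct generating-function argument. By Pascal's rule the binomial difference equals $\binom{u+g}{2g} + \binom{u+g-1}{2g}$, with generating series $q^g(1+q)/(1-q)^{2g+1}$, while $\binom{2g'}{g'-u}$ has generating series $q^{-g'}(1+q)^{2g'}$. Implementing the $(-1)^u$ via $q \mapsto -q$ and extracting the constant term then reduces the identity to evaluating $[q^{g'-g}](1-q)(1+q)^{2g'-2g-1}$. This equals $1$ when $g=g'$; it vanishes for $g'>g$ by the elementary symmetry $\binom{2m-1}{m} = \binom{2m-1}{m-1}$; and for $g'<g$ the series $(1+q)^{2g'-2g-1}$ is a power series in $q$ with no term in negative degree, so the coefficient is also zero. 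Small numerical checks such as $(g,g') \in \{0,1\}^2$ confirm the sign conventions immediately.
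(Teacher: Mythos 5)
Your proposal is correct, but note that the paper contains no proof of this proposition at all: it is quoted from Schwarz--Shapiro \cite{SS}, so there is no internal argument to compare against, and what you have written is a complete self-contained verification. Your route is the natural one: since $e_R$-Jordan cells of size $l=2j'+1$ arise only from the $j_R=j'$ isotypic part of $V$ and are indexed by $h_L$-weight vectors, with a weight-$w$ vector giving minimal degree $\alpha=w-(l-1)$, the cell counts are $\nu_l^{\alpha}=\sum_{g'}N^{g'}_{(l-1)/2}\binom{2g'}{g'-\alpha-l+1}$; substituting, and using $(-1)^{\alpha+g}l=(-1)^{2j'}(2j'+1)(-1)^{u+g}$ with $u=\alpha+l-1$, the right-hand side of the proposition collapses to $\sum_{j'}(-1)^{2j'}(2j'+1)N_{j'}^{g}=\tilde n_g^{\beta}$ (Definition~\ref{gvd}) once one knows the identity $(-1)^g\sum_{u\ge 0}(-1)^u\binom{2g'}{g'-u}\bigl\{\binom{u+g+1}{2g+1}-\binom{u+g-1}{2g+1}\bigr\}=\delta_{g,g'}$. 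I checked your generating-function evaluation: Pascal's rule gives $\binom{u+g}{2g}+\binom{u+g-1}{2g}$ with series $q^g(1+q)/(1-q)^{2g+1}$, the Laurent polynomial $\sum_u\binom{2g'}{g'-u}q^{-u}=q^{-g'}(1+q)^{2g'}$ contributes only through $u\ge 0$ when paired with a power series, and the constant term is indeed $[q^{g'-g}](1-q)(1+q)^{2g'-2g-1}$, which equals $1$ for $g=g'$, vanishes for $g'>g$ by $\binom{2m-1}{m}=\binom{2m-1}{m-1}$, and vanishes for $g'<g$ since no negative powers occur.

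Two small points you should make explicit. First, $R_{g'}(\beta)$ is only a virtual $(\mathfrak{sl}_2)_R$-representation, so ``each summand $I_{g'}\otimes (j')_R$ decomposes into $\dim I_{g'}$ Jordan cells'' is literally meaningless; the clean statement is that $\nu_l^{\alpha}$ equals the $h_L$-weight-$(\alpha+l-1)$ multiplicity of the $j_R=(l-1)/2$ isotypic component of $V$, and weight multiplicities are additive, hence computable from the virtual expansion of that component in the basis $\{I_{g'}\}$; this gives exactly your formula. Second, your convention $\binom{n}{k}=0$ for $n<0$ is actually needed (the polynomial extension $\binom{-1}{2g+1}=-1$ would spoil the $g=0$, $\alpha+l=1$ terms), and it is the convention intended in the statement and in \cite{SS}, so stating it, as you did, is the right call.
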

   
   \section{Stability conditions on triangulated categories}\label{Stab}
   In this section we briefly recall the notion of stability conditions
   on triangulated categories~\cite{Brs1}, fix some notation
   and proves some fundamental properties. 
   \subsection{Generalities}
   The notion of stability conditions on a triangulated category 
   $\dD$ was introduced by T.~Bridgeland~\cite{Brs1} motivated by 
   M.~Douglas's work on $\Pi$-stability~\cite{Dou1}, \cite{Dou2}. 
   Here we only introduce its definition and some terminology used 
   in this paper, and do not explain its review
    too much. For the readers who are not familiar with~\cite{Brs1}, 
   we recommend 
   consulting the original paper~\cite{Brs1}. 
   \begin{defi}\label{stde}\emph{
   A stability condition on $\dD$ consists of data 
   $\sigma =(Z, \pP)$, 
   $$Z\colon K(\dD) \lr \mathbb{C}, \quad \pP (\phi)\subset \dD,$$
   where $Z$ is a group homomorphism, $\pP (\phi)$ is a subcategory 
   for each $\phi \in \mathbb{R}$ which satisfies,} 
   \begin{itemize}
 \item $\pP (\phi +1)=\pP (\phi)[1].$ 
\item  \emph{If $\phi _1 >\phi _2$ and $A_i \in \pP (\phi _i)$, then 
$\Hom (A_1, A_2)=0$.} 
\item \emph{ If $E\in \pP (\phi)$ is non-zero,
 then $Z(E)=m(E)\exp (i\pi \phi)$ for some 
$m(E)\in \mathbb{R}_{>0}$.} 
\item \emph{For a non-zero object $E\in \tT$, we have the 
following collection of triangles:
$$\xymatrix{
0=E_0 \ar[rr]  & &E_1 \ar[dl] \ar[rr] & & E_2 \ar[r]\ar[dl] & \cdots \ar[rr] & & E_n =E \ar[dl]\\
&  A_1 \ar[ul]^{[1]} & & A_2 \ar[ul]^{[1]}& & & A_n \ar[ul]^{[1]}&
}$$
such that $A_j \in \pP (\phi _j)$ with $\phi _1 > \phi _2 > \cdots >\phi _n$. }
\end{itemize}
\end{defi}
A non-zero object in $\pP(\phi)$ is called \textit{semistable of phase}
$\phi$. 
The mass of $E$ is defined to be 
$$m_{\sigma}(E)=\sum _{j}|Z(A_j)|.$$
   For an interval $I\subset \mathbb{R}$, we denote by $\pP (I)$ the 
   smallest extension closed subcategory of $\dD$ which 
   contains $\pP(\phi)$ with $\phi \in I$. 
   It is easy to see that $\pP ((0,1])$ is the heart of a
   bounded t-structure 
   on $\dD$. This gives an 
   alternative way of constructing a stability condition. 
   \begin{prop}\emph{\bf{\cite[Proposition 4.2]{Brs1}}}\label{equiv} 
   Giving a stability condition is equivalent to 
   giving the heart of a bounded t-structure $\aA \subset \dD$ together 
   with a group homomorphism $Z\colon K(\aA) \to \mathbb{C}$ such that 
   for a non-zero object $E\in \aA$ one has
\begin{align}\label{up} Z(E)\in \mathbb{H}\cneq 
\{ r\exp (i\pi \phi) \mid r>0, 0<\phi \le 1\},\end{align}
and the pair $(Z, \aA)$ 
satisfies the Harder-Narasimhan 
property. \end{prop}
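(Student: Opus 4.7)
The plan is to prove the two implications separately. For the forward direction, given $\sigma=(Z,\pP)$, set $\aA\cneq\pP((0,1])$; as noted just before the proposition this is the heart of a bounded t-structure on $\dD$. Any nonzero $E\in\aA$ admits its slicing HN filtration with factors $A_j\in\pP(\phi_j)$, $\phi_j\in(0,1]$, so $Z(A_j)=m(A_j)\exp(i\pi\phi_j)\in\mathbb{H}$. Since $\mathbb{H}$ is closed under sums of its nonzero elements, $Z(E)=\sum_j Z(A_j)\in\mathbb{H}$. The HN property for $(Z,\aA)$ inside $\aA$ itself is obtained by running the same slicing filtration on $E$ and observing that the cones, being semistable of phases in $(0,1]$, already lie in $\aA$ and realise $E$ as an iterated extension in $\aA$.

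For the converse, assume $(\aA,Z)$ is as in the statement. For nonzero $E\in\aA$, write $Z(E)=|Z(E)|\exp(i\pi\phi(E))$ with $\phi(E)\in(0,1]$, and call $E$ $Z$-semistable if $\phi(F)\le\phi(E)$ for every nonzero subobject $F\subset E$. For $\phi\in(0,1]$ I let $\pP(\phi)\subset\aA$ be the full subcategory of semistables of phase $\phi$ (together with zero), and extend to $\phi\in\mathbb{R}$ by declaring $\pP(\phi+n)\cneq\pP(\phi)[n]$ for $n\in\mathbb{Z}$. The axioms $\pP(\phi+1)=\pP(\phi)[1]$ and $Z(E)=m(E)\exp(i\pi\phi)$ on nonzero $E\in\pP(\phi)$ are immediate from the construction.

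The substantive axioms to verify are $\Hom$-vanishing and existence of HN filtrations for arbitrary objects of $\dD$. For the vanishing, let $A_i\in\pP(\phi_i)$ with $\phi_1>\phi_2$ and write $A_i=B_i[m_i]$ with $B_i\in\aA$ semistable of phase $\psi_i\in(0,1]$ and $m_i\in\mathbb{Z}$. The inequality $\psi_1+m_1>\psi_2+m_2$ together with $\psi_1-\psi_2\in(-1,1)$ forces $m_1\ge m_2$. If $m_1>m_2$ then $\Hom(A_1,A_2)=\Hom(B_1,B_2[m_2-m_1])=0$ by the bounded t-structure axioms for $\aA$; if $m_1=m_2$ (hence $\psi_1>\psi_2$), then a nonzero map $B_1\to B_2$ in $\aA$ would have image $I$ with $\phi(I)\ge\psi_1$ as a quotient of $B_1$ but $\phi(I)\le\psi_2<\psi_1$ as a subobject of $B_2$, a contradiction.

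For the HN filtration of an arbitrary $E\in\dD$, I would first apply the cohomology filtration $\tau^{\le a}E\subset\tau^{\le a+1}E\subset\cdots\subset\tau^{\le b}E=E$ coming from the bounded t-structure, whose graded pieces are $H^i(E)[-i]$, and then refine each graded piece by the HN filtration of $H^i(E)$ in $\aA$ to produce semistable subquotients. The concatenated filtration has strictly decreasing phases, since the HN factors of $H^i(E)[-i]$ lie in $\pP((-i,1-i])$ and every such phase strictly exceeds every phase arising from $H^{i+1}(E)[-i-1]\in\pP((-i-1,-i])$. The main obstacle is the bookkeeping of this splicing: repeated use of the octahedral axiom is required to assemble the partial HN refinements inside each $H^i(E)$, sitting inside the t-structure filtration of $E$, into a genuine single sequence of exact triangles of the shape demanded by Definition~\ref{stde}. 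Finiteness of the filtration is automatic from boundedness of the t-structure and finiteness of HN filtrations in $\aA$, so once the splicing is carried out carefully the remaining verifications are routine.
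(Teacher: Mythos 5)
The paper offers no proof of this statement at all---it is quoted directly from Bridgeland \cite{Brs1}---and your sketch reproduces Bridgeland's own argument: passing to the heart $\aA=\pP((0,1])$ in one direction, and in the other building the slicing from $Z$-semistable objects, proving the $\Hom$-vanishing by the phase comparison on the image, and producing HN filtrations of arbitrary objects by splicing the t-structure truncation filtration with the HN filtrations of the cohomologies via the octahedral axiom; this outline is correct. The only point left implicit is that ``equivalence'' also requires the two constructions to be mutually inverse, i.e.\ that the slicing-semistable objects of phase in $(0,1]$ are exactly the $Z$-semistable objects of $\aA$, a routine check in the same spirit as your $\Hom$-vanishing step.
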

The set of stability conditions which satisfy the 
local finiteness (cf.~\cite[Definition 5.7]{Brs1})
is denoted by $\Stab (\dD)$. 
It is shown in~\cite[Section 6]{Brs1} that $\Stab (\dD)$ has a 
natural topology. Furthermore 
for each connected component $\Sigma \subset \Stab (\dD)$, 
there exists a linear subspace $V(\Sigma )\subset \Hom _{\mathbb{Z}}(K(\dD), 
\mathbb{C})$ with a norm such that we have the local homeomorphism, 
(cf.~\cite[Theorem 1.2]{Brs1}) 
\begin{align*}
\zZ \colon \Sigma \ni (Z, \pP) \longmapsto Z\in V(\Sigma).\end{align*}

   \subsection{Stability conditions on Calabi-Yau 3-folds}
  For a Calabi-Yau 3-fold $X$, we consider 
   the following triangulated category as in the introduction, 
   $$\dD _X \cneq \{ E\in D(X) \mid \dim \Supp (E) \le 1\}.$$
   Here we introduce the subspace of $\Stab (\dD _X)$, coming from 
   the points corresponding to the neighborhood of 
   the large volume limit at $X$. 
   For $B +i\omega \in N^1 (X)_{\mathbb{C}}$, we set 
   $Z_{(B, \omega)}\colon K(\dD _X) \to \mathbb{C}$ as 
   $$Z_{(B, \omega)}(E)= -\ch _3 (E) +(B +i\omega)\cdot \ch _2 (E).$$
   \begin{rmk}\emph{
   Note that $Z_{(B,\omega)}$ factors as follows, 
   $$Z_{(B,\omega)}\colon K(\dD_X) \stackrel{(\ch_2,\ch_3)}{\lr}
    N_1(X)\oplus \mathbb{Z}
   \lr \mathbb{C}.$$
   Here the right arrow takes $v=(\beta,k)$ to $-k+(B+i\omega)\beta$, 
   which we write as $Z_{(B,\omega)}(v)$ by abuse of notation. }
   \end{rmk}

   We also set $\Coh _{\le 1}(X)$ as
   $\Coh _{\le 1}(X)\cneq \Coh (X) \cap \dD _X$. 
   Note that $\Coh _{\le 1}(X)$ is the heart of a bounded t-structure on $\dD _X$. 
   We have the following lemma. 
   \begin{lem}\label{LV}
   For $B +i\omega \in A(X)_{\mathbb{C}}$, the pair 
   $$\sigma _{(B, \omega)}\cneq (Z_{(B, \omega)}, \Coh _{\le 1}(X))$$
   determines a point in $\Stab (\dD _X)$
   in the sense of Proposition~\ref{equiv}.  
   \end{lem}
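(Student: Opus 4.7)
The plan is to apply Proposition~\ref{equiv} with heart $\Coh_{\le 1}(X)$ and central charge $Z_{(B,\omega)}$. Three things must be verified: that $\Coh_{\le 1}(X)$ is the heart of a bounded $t$-structure on $\dD_X$, that $Z_{(B,\omega)}$ sends non-zero objects of this heart into $\mathbb{H}$ as defined in (\ref{up}), and that the pair satisfies the Harder-Narasimhan property; after that, local finiteness must be checked to actually land inside $\Stab(\dD_X)$. The $t$-structure claim is immediate: $\Coh_{\le 1}(X)$ is a Serre subcategory of $\Coh(X)$ closed under extensions, and every object of $\dD_X$ has all its cohomology sheaves in $\Coh_{\le 1}(X)$, so the standard $t$-structure on $D(X)$ restricts to $\dD_X$ with the stated heart.

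For positivity I would split according to the dimension of the support. If $\dim\Supp(E)=0$, then $\ch_2(E)=0$ and $\ch_3(E)=\length(E)>0$, so $Z_{(B,\omega)}(E)=-\ch_3(E)$ lies on the negative real axis, giving phase exactly $1$. If $\dim\Supp(E)=1$, then $\ch_2(E)$ is a non-zero effective one-cycle class, so ampleness of $\omega$ yields $\Im Z_{(B,\omega)}(E)=\omega\cdot\ch_2(E)>0$, putting $Z_{(B,\omega)}(E)$ strictly in the open upper half plane. Either way $Z_{(B,\omega)}(E)\in\mathbb{H}$.

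The main obstacle is Harder-Narasimhan, which I would establish by exploiting Noetherianity of $\Coh_{\le 1}(X)$ together with the dichotomy above. The torsion pair whose torsion part consists of zero-dimensional sheaves and whose torsion-free part consists of pure one-dimensional sheaves already separates phase $1$ from phases in $(0,1)$: for any $E$ the maximal zero-dimensional subsheaf $T\subset E$ is the largest subobject of $Z$-phase $1$, and $E/T$ is pure of dimension one. On the pure one-dimensional part the reduced slope
\[
\mu(F)\cneq\frac{\ch_3(F)-B\cdot\ch_2(F)}{\omega\cdot\ch_2(F)}
\]
equals $-\Re Z_{(B,\omega)}/\Im Z_{(B,\omega)}$, differs from the usual slope underlying Simpson-Gieseker stability by an additive constant, and admits HN filtrations by the standard Shatz-type argument; the required boundedness of destabilising subsheaves is Simpson's boundedness for pure one-dimensional semistable sheaves of fixed numerical class. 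Concatenating the HN filtration of $E/T$ with $T$ placed at the top, where the highest phase $1$ sits, gives the filtration demanded in Definition~\ref{stde}.

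Finally, local finiteness in the sense of~\cite[Definition 5.7]{Brs1} follows because $Z_{(B,\omega)}$ factors through the finitely generated subgroup of $N_1(X)\oplus\mathbb{Z}$ generated by Chern characters of objects of $\Coh_{\le 1}(X)$, so its image is discrete in $\mathbb{C}$; combined with the structure of HN filtrations just established, each slice $\pP((\phi-\epsilon,\phi+\epsilon))$ is quasi-abelian of finite length for $\epsilon$ sufficiently small.
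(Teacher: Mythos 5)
Your overall strategy is the same as the paper's: the paper verifies exactly the positivity dichotomy you give (phase $1$ for zero-dimensional sheaves, $\Imm Z_{(B,\omega)}>0$ for one-dimensional support) and then refers to \cite[Lemma 4.1]{Tst} for the Harder--Narasimhan property, and your reduction via the torsion pair (zero-dimensional part of phase $1$, pure one-dimensional part with phases in $(0,1)$) is the standard way to carry that out. However, two of the justifications you give are false as stated and must be replaced. First, the twisted slope $\mu(F)=(\ch_3(F)-B\cdot\ch_2(F))/(\omega\cdot\ch_2(F))$ differs from the Gieseker--Simpson slope $\ch_3(F)/(\omega\cdot\ch_2(F))$ by the term $-B\cdot\ch_2(F)/(\omega\cdot\ch_2(F))$, which depends on the direction of $\ch_2(F)$ in $N_1(X)$; it is an additive constant only if $B$ is proportional to $\omega$ (or the Picard number is one). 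So you cannot literally quote the Shatz/Simpson theory. The repair is routine but has to be made: $\Coh_{\le 1}(X)$ is Noetherian, for a subsheaf $F\subset E$ the class $\ch_2(F)$ ranges over the finite set of effective classes $\gamma$ with $\ch_2(E)-\gamma$ effective, and $\ch_3(F)$ is bounded above for $F\subset E$ (e.g. via $\chi(F)\le h^0(F)\le h^0(E)$ after a fixed twist), so the $\mu$-values of subsheaves of $E$ are bounded above, a maximal destabilizing subsheaf exists, and the usual induction produces HN filtrations for $\mu_{(B,\omega)}$.

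Second, and more seriously, your local finiteness argument rests on a false claim: for general $B+i\omega\in A(X)_{\mathbb{C}}$ the classes $B,\omega$ are arbitrary real ample/real divisor classes, and the image under $Z_{(B,\omega)}$ of the finitely generated group of numerical classes is typically \emph{dense} in $\mathbb{C}$, not discrete (discreteness only holds for rational $B,\omega$). Local finiteness is nevertheless true, but it has to be argued as in the finiteness statement underlying Proposition~\ref{wallcham}: the set of integral classes in $\{c\in\overline{NE}(X)\mid \omega\cdot c\le \omega\cdot\ch_2(E)\}$ is finite, so along a chain of strict subobjects of $E$ in $\pP((\phi-\varepsilon,\phi+\varepsilon))$ with $\varepsilon<1/2$ the classes $\ch_2$ can change only finitely many times; once they stabilize, the successive subquotients are zero-dimensional, of phase exactly $1$ and of bounded total length, so the chain terminates (and dually for quotients). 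With these two corrections your proof is complete and agrees with the argument the paper imports from \cite[Lemma 4.1]{Tst}.
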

   \begin{proof}
   The proof of this lemma is exactly same as in~\cite[Lemma 4.1]{Tst}.
     In fact for a non-zero $E\in \Coh _{\le 1}(X)$, 
   we have $\Imm Z_{(B, \omega)}(E) >0$ when $\dim \Supp (E) =1$
   and $Z_{(B, \omega)}(E) \in \mathbb{R}_{<0}$ 
   when $\dim \Supp (E)=0$. 
   Thus (\ref{up}) holds. 
   One can also check the Harder-Narasimhan 
   property as in~\cite[Lemma 4.1]{Tst}. 
   \end{proof}
   
   \begin{rmk}\label{Gs}\emph{
   For an object $E\in \Coh_{\le 1}(X)$, it is 
   $\sigma_{(0, \omega)}$-semistable if and only if 
   for any non-trivial 
   subobject $F\subset E$ in $\Coh_{\le 1}(X)$, one has 
  $$\frac{\ch_3(F)}{\omega \cdot \ch_2(F)} \le 
  \frac{\ch_3(E)}{\omega \cdot \ch_2(E)}, $$
  i.e. $E$ is $\omega$-Gieseker semistable sheaf.}   
   \end{rmk}
   We define $\Stab(X)$ to be the following fiber product, 
   $$\xymatrix{
   \Stab(X) \ar[r]\ar[d] & \Stab (\dD _X) \ar[d] \\
   N^1 (X)_{\mathbb{C}} \ar[r] & \Hom (K(\dD _X), \mathbb{C}),
   }$$
   where the right arrow takes $(Z,\pP)$ to $Z$ and
   the bottom arrow takes $B +i\omega$ to $Z_{(B, \omega)}$. 
   Note that the stability conditions constructed in Lemma~\ref{LV} 
   are contained in $\Stab (X)$. Let  
   $U_X \subset \Stab (X)$ be
   $$U_X \cneq \{ \sigma _{(B, \omega)}\in \Stab(X) \mid B +i\omega \in 
   A(X)_{\mathbb{C}}\}.$$ 
  We have to check the following, whose proof will be postponed
  in Section~\ref{tech}. 
  \begin{lem}\label{op}
   The subset $U_X$ is an open connected subset of $\Stab (X)$. 
   \end{lem}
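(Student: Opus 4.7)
The plan is to exhibit $U_X$ as the image of a continuous open injection from $A(X)_{\mathbb{C}}$, namely
$$\Phi \colon A(X)_{\mathbb{C}} \lr \Stab(X), \quad B+i\omega \longmapsto \sigma_{(B,\omega)},$$
which is well-defined by Lemma~\ref{LV}. Connectedness of $U_X$ would then be immediate: $A(X)_{\mathbb{C}}$ is the sum of $N^1(X)_{\mathbb{C}}$'s real part with $i$ times the ample cone, which is a convex (hence path-connected) open subset of $N^1(X)_{\mathbb{C}}$. So the bulk of the work is to verify that $\Phi$ is continuous and open onto its image.

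For continuity, the point is that all the stability conditions $\sigma_{(B,\omega)}$ share the common heart $\Coh_{\le 1}(X)$, while the central charges $Z_{(B,\omega)}$ depend $\mathbb{C}$-linearly (hence continuously) on $B+i\omega$. In Bridgeland's topology on $\Stab(\dD_X)$, two stability conditions having a common heart and nearby central charges are automatically close (this is implicit in the construction in \cite[Section 6]{Brs1} and follows from the observation that the slicings $\pP((0,1])$ coincide, so the HN-filtrations can be compared directly). Combined with the tautological continuity of the projection $\Stab(X)\to N^1(X)_{\mathbb{C}}$ coming from the fiber-product definition, this gives continuity of $\Phi$.

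For openness, fix $\sigma=\sigma_{(B,\omega)}\in U_X$ lying in a connected component $\Sigma\subset \Stab(\dD_X)$. By Bridgeland's deformation theorem, the central charge map $\zZ$ restricts to a homeomorphism from some open neighborhood $W\subset \Sigma$ of $\sigma$ onto an open subset of $V(\Sigma)$. A point $\tau\in \Stab(X)$ sufficiently close to $\sigma$ has underlying stability condition in $W$ and projects to some $B'+i\omega'\in N^1(X)_{\mathbb{C}}$ close to $B+i\omega$. Since the ample cone is open, $\omega'$ is still ample after shrinking the neighborhood, so $\sigma_{(B',\omega')}\in U_X$ is defined, has central charge $Z_{(B',\omega')}$, and by the continuity of $\Phi$ already established, lies in $W$. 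Injectivity of $\zZ|_W$ then forces $\tau = \sigma_{(B',\omega')}\in U_X$, so $U_X$ contains an open neighborhood of each of its points.

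The main technical hurdle will be the continuity statement: one must pin down precisely why two stability conditions with the same heart and close central charges are close in the Bridgeland metric. Once this ``common-heart continuity'' is in hand, openness is a clean application of the local homeomorphism theorem together with the openness of the ample cone, and connectedness is immediate from convexity of $A(X)_{\mathbb{C}}$.
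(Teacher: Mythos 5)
Your overall architecture (connectedness from convexity of $A(X)_{\mathbb{C}}$, openness from local injectivity of $\zZ$) is sensible, but the entire proposal hangs on the ``common-heart continuity'' claim, which you explicitly leave unproved --- and that claim is exactly where all the content of the lemma lies. There are two concrete missing steps. First, ``nearby central charges'' has to mean small in Bridgeland's relative norm
$\lVert Z_{(B',\omega')}-Z_{(B,\omega)}\rVert_{\sigma}
=\sup\bigl\{ \lvert\{(B-B')+i(\omega-\omega')\}\cdot\ch_2(E)\rvert/\lvert Z_{(B,\omega)}(E)\rvert \ : \ E \mbox{ is } \sigma\mbox{-semistable}\bigr\}$,
and closeness of $B'+i\omega'$ to $B+i\omega$ in the finite-dimensional space $N^1(X)_{\mathbb{C}}$ does not formally give this, since the supremum runs over all semistable objects. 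The paper's proof makes this work by noting that the normalized classes $\pm\ch_2(E)/\lvert Z_{(B,\omega)}(E)\rvert$ lie in the set $\{c\in\overline{\NE}(X)\mid \omega\cdot c\le 1\}$, which is compact because the ample cone is open; the supremum is then bounded by a maximum over this compact set and tends to $0$ as $B'+i\omega'\to B+i\omega$. Without this estimate you cannot legitimately invoke Bridgeland's local homeomorphism or deformation theorems in the $N^1(X)_{\mathbb{C}}$-directions at all.

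Second, even granting the norm estimate, ``same heart plus close central charges implies close in $d(\cdot,\cdot)$'' is not a formal consequence of ``the HN-filtrations can be compared directly'': only the hearts coincide, not the slicings, the HN filtrations with respect to the two stability conditions are genuinely different, and a priori the phase differences are only bounded by $1$. What actually has to be shown is that the stability condition $\tau=(Z_{(B',\omega')},\qQ)$ produced by Bridgeland's deformation theorem near $\sigma_{(B,\omega)}$ satisfies $\qQ((0,1])\subset\Coh_{\le 1}(X)$, hence has heart $\Coh_{\le 1}(X)$ and equals $\sigma_{(B',\omega')}\in U_X$. The paper proves this by taking $E\in\qQ(\phi)$ inside a thin enveloping subcategory $\pP((a,b))$ and analyzing the triangle $H^{-1}(E)[1]\to E\to H^0(E)$, using $Z_{(B',\omega')}$-semistability to force $E\cong H^0(E)\in\Coh_{\le 1}(X)$. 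Your injectivity trick does not avoid this work, it relocates it: to place $\sigma_{(B',\omega')}$ inside the injectivity neighborhood $W$ you appeal to precisely the continuity you have not established, so the openness argument is circular as written. In short, the two load-bearing ingredients --- the compactness bound on $\lVert\cdot\rVert_{\sigma}$ and the identification of the deformed heart with $\Coh_{\le 1}(X)$ --- are missing, and they constitute the paper's actual proof.
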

    By~\cite[Theorem 1.1]{Brs1} and Lemma~\ref{op},  the 
  map    
   $$\zZ_X \colon \Stab(X) \ni (Z,\pP) \longmapsto 
   Z \in  N^1 (X)_{\mathbb{C}},$$
   restricts to a homeomorphism between 
   $U_X$ and $A(X)_{\mathbb{C}}$. 
   
   \subsection{Wall and chamber structures}
   In this paragraph, we recall the notion of 
   wall and chamber structures on the space of 
   stability conditions. 
   The wall and chamber
    structure is introduced in~\cite{Brs2} 
    on the space of stability conditions on K3 surfaces, 
     and we show that 
   our space $\Stab(X)$ also possesses such a structure.  
   Let 
   $$ \Stab^{\ast}(X) \subset \Stab(X), $$
   be the connected component which contains $U_X$. 
   Let $\sS \subset \dD_X$ be a set of objects. 
   For $\sigma \in \Stab^{\ast}(X)$, 
   we call $\sS$ has \textit{bounded mass} if 
   there is $m>0$ such that 
   $$m_{\sigma}(E) <m,$$
   for any $E\in \sS$. It is easy to show that if the above
   condition holds for $\sigma \in \Stab^{\ast}(X)$, 
   then it also holds for any $\sigma ' \in \Stab^{\ast}(X)$. 
   We have the following proposition. 
   (cf.~\cite[Proposition 9.3]{Brs2}.)
   \begin{prop}\label{wallcham}
For a fixed compact subset $\overline{\uU}
\subset \Stab^{\ast}(X)$, there is a finite 
number of real codimension one submanifolds 
$\{ \wW_{\gamma} \mid \gamma \in \Gamma \}$ 
such that each connected component  
$$\cC \subset \overline{\uU}
\setminus \bigcup _{\gamma \in \Gamma}\wW_{\gamma}$$
has the following property. If $E\in \sS$ is $\sigma$-semistable for 
some $\sigma \in \cC$,
 then $E$ is semistable in $\sigma'$ for all $\sigma'
\in \cC$. 
 \end{prop}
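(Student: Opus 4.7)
The plan is to adapt Bridgeland's wall-and-chamber argument from \cite[Proposition 9.3]{Brs2}, the essential input being a finiteness statement for the numerical classes of possible Harder--Narasimhan factors of objects in $\sS$, extracted from the bounded-mass hypothesis. The classes $(\ch_2, \ch_3)$ of objects in $\dD_X$ take values in a finitely generated sublattice $\Lambda \subset N_1(X) \oplus \mathbb{Z}$, and, by the very definition of $\Stab(X)$, the central charge of each $\sigma \in \Stab^{\ast}(X)$ factors through $\Lambda \otimes \mathbb{R}$. By the bounded-mass property, every HN factor $F$ of any $E \in \sS$ at any $\sigma \in \overline{\uU}$ satisfies $|Z_\sigma(F)| \leq m$. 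Combined with the compactness of $\overline{\uU}$ and the fact that $\sigma \mapsto Z_\sigma|_{\Lambda \otimes \mathbb{R}}$ is a continuous family of norms on a finite-dimensional space, this forces all such $v(F)$ into a finite subset $\Lambda_0 \subset \Lambda$.

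Next I would define the walls. For each pair $(v_1, v_2) \in \Lambda_0 \times \Lambda_0$ with $Z_\sigma(v_1)$ and $Z_\sigma(v_2)$ being $\mathbb{R}$-linearly independent at some $\sigma \in \overline{\uU}$, set
$$\wW_{v_1, v_2} \cneq \{\sigma \in \overline{\uU} \mid \Imm(Z_\sigma(v_1)\,\overline{Z_\sigma(v_2)}) = 0\}.$$
Via the local homeomorphism $\zZ_X \colon \Stab^{\ast}(X) \to V(\Sigma)$ guaranteed by \cite[Theorem 1.2]{Brs1}, each $\wW_{v_1, v_2}$ is cut out locally by a single real-analytic equation with non-vanishing differential, hence is a real codimension one submanifold. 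There are only finitely many such walls since $\Lambda_0$ is finite; label them $\{\wW_\gamma\}_{\gamma \in \Gamma}$.

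For the chamber-constancy, take a connected component $\cC$ of $\overline{\uU} \setminus \bigcup_\gamma \wW_\gamma$ and $E \in \sS$ which is $\sigma_0$-semistable for some $\sigma_0 \in \cC$. If $E$ failed to be $\sigma_1$-semistable at some $\sigma_1 \in \cC$, there would be a destabilizing short exact sequence $0 \to F \to E \to G \to 0$ in the heart of $\sigma_1$ with $\phi_{\sigma_1}(F) > \phi_{\sigma_1}(G)$, and with $v(F), v(G) \in \Lambda_0$ by the first step. Along a path in $\cC$ from $\sigma_0$ to $\sigma_1$, the continuous function $\Imm(Z_\sigma(v(F))\,\overline{Z_\sigma(v(G))})$ is $\leq 0$ at $\sigma_0$ (by semistability) and $> 0$ at $\sigma_1$, so vanishes somewhere on the path, placing the path on $\wW_{v(F), v(G)}$ and contradicting that $\cC$ avoids every wall.

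The main obstacle is the finiteness assertion of the first paragraph: the bound $|Z_\sigma(v)| < m$ uniformly in $\sigma \in \overline{\uU}$ must force $v \in \Lambda$ into a finite set. This requires a uniform lower bound for $|Z_\sigma(v)|$ in terms of a fixed norm on $\Lambda \otimes \mathbb{R}$ as $\sigma$ varies over $\overline{\uU}$, which should follow from compactness of $\overline{\uU}$ together with the support-type property built into our definition of $\Stab(X)$ as the fiber product over the finite-dimensional space $N^1(X)_{\mathbb{C}}$; once this lower bound is in hand, the rest of the argument is a standard assembly of walls from finitely many linear data.
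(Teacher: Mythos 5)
Your overall architecture (reduce to the finiteness of the numerical classes occurring in $\sS$, i.e.\ the analogue of \cite[Lemma 9.2]{Brs2}, then quote the wall-and-chamber assembly of \cite[Proposition 9.3]{Brs2}) is exactly the paper's strategy, and your wall construction and chamber-constancy argument are the standard ones. But the one step that carries the actual content has a genuine gap. You assert that $\sigma\mapsto Z_\sigma|_{\Lambda\otimes\mathbb{R}}$ is a ``continuous family of norms on a finite-dimensional space,'' so that $|Z_\sigma(v)|\le m$ over the compact set $\overline{\uU}$ forces $v$ into a finite set. This is false whenever $\rank\Lambda>2$: each $Z_\sigma$ is a linear map to $\mathbb{C}\cong\mathbb{R}^2$ (here $Z_{(B,\omega)}(\beta,k)=-k+(B+i\omega)\beta$), so it has a kernel of dimension $\rho(X)-1\ge 1$ as soon as the Picard number is at least $2$, and the bound $|Z_\sigma(v)|\le m$ only confines $v$ to a slab containing infinitely many lattice points. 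Your closing paragraph concedes that what is really needed is a uniform lower bound $|Z_\sigma(v)|\ge c\lVert v\rVert$ and hopes it is ``built into our definition of $\Stab(X)$''; it is not, and no such support-type estimate can hold for arbitrary $v\in\Lambda$, for the kernel reason just given.

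The missing idea is positivity of the classes that actually occur. Since bounded mass at one point of $\Stab^{\ast}(X)$ implies bounded mass everywhere in the component, it suffices to work at a single convenient point $\sigma_{(0,\omega)}$ with $\omega$ ample, where semistable objects are shifts of $\omega$-Gieseker semistable sheaves; hence for each Harder--Narasimhan factor $A_i$ of $E\in\sS$ one has $\ch_2(A_i)\in\overline{NE}(X)$ or $-\ch_2(A_i)\in\overline{NE}(X)$, and the bounded-mass condition bounds $|\ch_3(A_i)|$ and $|\ch_2(A_i)\cdot\omega|$. The lower bound you are missing holds precisely on this cone: the slice $\{c\in\overline{NE}(X)\mid c\cdot\omega\le 1\}$ is compact (by openness of the ample cone, i.e.\ $\omega$ lies in the interior of the dual cone), and combined with the discreteness of the image of $(\ch_2,\ch_3)$ this yields finitely many possible pairs $(\ch_2(A_i),\ch_3(A_i))$, hence finitely many classes in $\sS$. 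With that finiteness established, your second and third paragraphs go through as in \cite[Proposition 9.3]{Brs2}.
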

   \begin{proof}
   The same proof of~\cite[Proposition 9.3]{Brs2}
   applies once we show the analogue of~\cite[Lemma 9.2]{Brs2}
   in our case, i.e. the set of numerical classes, 
   \begin{align}\label{once}
   \{ (\ch_2(E), \ch_3(E)) \mid E\in \sS \} \subset N_1(X) \oplus 
   \mathbb{Z},
   \end{align}
   is a finite set. 
   For an ample divisor $\omega$, let us take 
   $\sigma=\sigma_{(0, \omega)}$. 
   Also for $E\in \sS$, let $A_1, \cdots, A_n$ be the 
   $\sigma$-semistable factors. 
   Then the bounded mass condition for $\sS$ implies that the values 
  $$\lvert \ch_3(A_i) \rvert, \quad \lvert \ch_2(A_i)\cdot \omega \rvert, $$
  are bounded. On the other hand, the following space, 
  $$\{ c\in \overline{NE}(X) \mid c\cdot \omega \le 1\}
  \subset N_1(X), $$
  is compact. In fact openness of the ample cone immediately 
  implies the compactness of the above space. 
  Since $\ch_2(A_i)$ or 
  $-\ch_2(A_i)$ is contained in $\overline{NE}(X)$, 
  we see that the pair $(\ch_2(A_i), \ch_3(A_i))$ 
  has finite number of possibilities. Hence 
  (\ref{once}) is also finite. 
  \end{proof}
 We call a connected component 
 $\cC \subset \overline{\uU}
\setminus \bigcup _{\gamma \in \Gamma}\wW_{\gamma}$
\textit{chamber}. 
   
\subsection{Moduli theory of semistable objects}
In this paragraph, we consider the moduli problem of $\sigma$-semistable 
objects $E\in \dD_X$ for $\sigma \in \Stab(X)$. 
Such a moduli theory is studied in~\cite{Tst3} for 
stability conditions on K3 surfaces and abelian surfaces, 
and we use some basic arguments developed there. 
Let $\mM$ be the moduli stack of objects $E\in \dD_X$, satisfying 
the following condition, 
\begin{align}\label{Ext}
\Ext^i(E, E)=0, \quad \mbox{ for } i<0.\end{align}
Then by the result of Lieblich~\cite{LIE}, 
the stack $\mM$ is an Artin stack of 
locally finite type over $\mathbb{C}$. 
More precisely, Lieblich showed that the stack 
of $E\in D(X)$ satisfying the above condition is an Artin 
stack of locally finite type. However for a family 
of objects $\eE \in D(X\times S)$, the condition $\eE_s \in \dD_X$
is obviously an open condition, thus $\mM$ is also an Artin stack 
of locally finite type. 

 For $\sigma=(Z, \pP) \in \overline{U}_X$, 
$v\in N_1(X) \oplus \mathbb{Z}$ and $\phi \in \mathbb{R}$, we 
can consider the substack, 
$$i \colon \mM^{(v, \phi)}(\sigma) \hookrightarrow \mM, $$
which is the moduli stack of $E\in \pP(\phi)$ of numerical type $v$. 
The purpose here is to show that $\mM^{(v, \phi)}(\sigma)$ is algebraic. 
Recall that a set of objects $\sS \subset \dD_X$ is called 
\textit{bounded} if there is a finite type $\mathbb{C}$-scheme $S$
together with an object $\eE \in D(X\times S)$ such that 
any object $E\in \sS$ is isomorphic to $\eE_s$ for some $s\in S$. 
We use the following lemma, whose proof will be postponed until 
Section~6. 

\begin{lem}\label{bounded}
Let us take $\sigma =(Z, \pP)\in \Stab^{\ast}(X)$ such 
that $Z$ is defined over $\mathbb{Q}$. Then the 
set of objects, \emph{
$$\mM^{(v, \phi)}(\sigma)(\Spec \mathbb{C}) =
\{ E\in \pP(\phi) \mid 
E\mbox{ is of numerical type }v\}, $$}
is bounded.
\end{lem}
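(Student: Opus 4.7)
The plan is to propagate boundedness from a rational reference point in $U_X$, where the claim reduces to classical boundedness of Gieseker semistable sheaves, to the given $\sigma\in\Stab^{\ast}(X)$ by travelling along a path and exploiting the wall-and-chamber structure of Proposition~\ref{wallcham} one wall at a time.

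As a base case, I would fix a rational point $\sigma_0=\sigma_{(B,\omega)}\in U_X$. By Remark~\ref{Gs}, the $\sigma_0$-semistable objects of numerical type $v=(\beta,k)$ are, up to shift, $\omega$-Gieseker semistable pure one-dimensional sheaves with fixed Chern character, and these form a bounded family by the classical theorem of Maruyama-Simpson-Langer. Now join $\sigma_0$ to $\sigma$ by a continuous path $\gamma\colon[0,1]\to\Stab^{\ast}(X)$ with compact image $\overline{\uU}$, and let $\sS\subset\dD_X$ be the collection of all objects that arise as HN or JH factors of $\gamma(t)$-semistable objects of type $v$ for some $t\in[0,1]$. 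Because the central charge of $\gamma(t)$ applied to $v$ varies continuously over the compact arc, the set $\sS$ has bounded mass, and Proposition~\ref{wallcham} then produces finitely many walls subdividing $\gamma$ into finitely many chambers; it therefore suffices to prove that boundedness propagates across a single wall.

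The inductive step proceeds by induction on the mass $\lvert Z_{\sigma_w}(v)\rvert$ at a chosen point $\sigma_w$ on the wall $\mathcal{W}$ separating adjacent chambers $\cC, \cC'$, assuming boundedness is known in $\cC$. If $E$ is $\sigma'$-semistable of type $v$ for some $\sigma'\in\cC'$, its $\sigma_w$-JH factors $A_1,\dots,A_n$ all have equal phase at $\sigma_w$, so $\sum_i\lvert Z_{\sigma_w}(v_i)\rvert=\lvert Z_{\sigma_w}(v)\rvert$ with $\sum v_i=v$, and for $n\ge 2$ each factor has strictly smaller mass; objects with $n=1$ are $\sigma_w$-stable and hence remain semistable throughout a neighbourhood of $\sigma_w$, so they already lie in the bounded family produced in $\cC$. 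Rationality of $Z$ together with the discreteness of $N_1(X)\oplus\mathbb{Z}$ forces only finitely many decomposition types $v=v_1+\cdots+v_n$ to arise. The factors $A_i$ then form bounded families by the inductive hypothesis, and $E$ is reconstructed as an iterated extension of these factors governed by the $\Ext^1$-groups between them.

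The main obstacle will be the last assertion: one must verify that the relevant $\Ext^1$-groups remain uniformly finite-dimensional as the factors vary in their bounded families, and that iterated extensions of bounded families are themselves bounded. The Calabi-Yau condition, together with the support constraint $\dim\Supp\le 1$, keeps the Ext-groups under control, after which standard algebraic-stack and relative-$\Ext$ arguments in the style of~\cite{Tst3} assemble the desired bounded family on $\cC'$, closing the induction.
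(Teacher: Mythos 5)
Your proposal is correct in outline and shares the paper's two pillars --- the anchor at a large--volume point, where semistable objects are Gieseker semistable sheaves and hence classically bounded, and the final step (implicit in both your argument and the paper's) that an object whose factors lie in finitely many bounded families of fixed numerical class is itself bounded --- but the propagation mechanism is genuinely different. The paper proves a stronger local statement: for any $\sigma=(Z,\pP)$ in the connected component of $\Stab(\dD_X)$ containing $U_X$ with $\Imm Z\subset\mathbb{C}$ discrete, the set $\{E\in\pP(\phi)\mid Z(E)=z\}$ is bounded; it propagates this by a direct deformation argument, with no walls at all: for $\tau=(W,\qQ)$ close to $\sigma$ and $E\in\qQ(1/2)$, the $\sigma$-HN factors $F_i$ of $E$ have phases in $(1/4,3/4)$ and $\Imm Z(F_i)$ bounded, so discreteness of $\Imm Z$ --- this is exactly where the rationality hypothesis enters --- leaves only finitely many possible values $Z(F_i)$, and boundedness at $\sigma$ applied to the factors gives boundedness at $\tau$. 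Your route replaces this with a compact path, Proposition~\ref{wallcham}, and a wall-crossing step using Jordan--H\"older factors at a wall point $\sigma_w$; this works, but it silently relies on two further standard facts that the paper's argument does not need: that an object semistable in a chamber stays semistable on the closure of that chamber (so the JH filtration at $\sigma_w$ exists), and that a $\sigma_w$-stable object remains semistable in a neighbourhood of $\sigma_w$ (so the factors are semistable somewhere in the previous chamber); once these are granted your auxiliary induction on mass is not really needed. Also, the finiteness of the decompositions $v=v_1+\cdots+v_n$ should be credited not to rationality of $Z$ but to the bounded-mass argument inside the proof of Proposition~\ref{wallcham} (the classes of objects of $\sS$ form a finite set); indeed your argument never genuinely uses rationality, so, modulo the standard facts above and the extension step you flag, it would prove the lemma for arbitrary $\sigma\in\Stab^{\ast}(X)$, whereas the paper's shorter argument needs discreteness of $\Imm Z$ but yields the stronger statement quoted above. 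One small slip: Remark~\ref{Gs} identifies $\sigma_{(0,\omega)}$-semistable objects with $\omega$-Gieseker semistable sheaves only for $B=0$, so take $B=0$ at your base point or argue separately for the $B$-twisted case.
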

Using the above lemma and the argument in~\cite{Tst3}, 
we show the following proposition. 
\begin{prop}\label{Artin}
The stack $\mM^{(v, \phi)}(\sigma)$ is an Artin stack of finite 
type over $\mathbb{C}$, and $i\colon \mM^{(v, \phi)}(\sigma)
   \hookrightarrow \mM$ is an open immersion. 
   \end{prop}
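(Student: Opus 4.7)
The plan is to establish the proposition in two steps corresponding to the two conclusions: first, that the inclusion $i$ is an open immersion, and second, that the domain is of finite type (not merely locally of finite type). Since $\mM$ is an Artin stack locally of finite type by the cited result of Lieblich, any open substack of $\mM$ is automatically an Artin stack locally of finite type, and finite type then follows from boundedness of the set of $\mathbb{C}$-points together with quasi-compactness on $\mM$. Thus everything reduces to proving (a) boundedness of $\mM^{(v,\phi)}(\sigma)(\Spec \mathbb{C})$ for arbitrary $\sigma \in \Stab^{\ast}(X)$, and (b) openness of the $\sigma$-semistability condition on families.

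For step (a), the difficulty is that Lemma~\ref{bounded} only handles central charges defined over $\mathbb{Q}$, whereas $\sigma =(Z,\pP)$ may have irrational $Z$. The remedy is to pick a small compact neighborhood $\overline{\uU} \subset \Stab^{\ast}(X)$ of $\sigma$ and apply the wall-and-chamber decomposition of Proposition~\ref{wallcham} to the set $\sS$ of objects of numerical type $v$, which has bounded mass in a neighborhood of $\sigma$ (the mass function is continuous). This yields a chamber $\cC$ containing $\sigma$ within which the set of semistable objects of numerical type $v$ is locally constant on the chamber. Perturbing $\sigma$ to a rational point $\sigma' \in \cC$ and invoking Lemma~\ref{bounded} for $\sigma'$ gives the desired boundedness for $\sigma$.

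For step (b), I would follow the argument of \cite{Tst3}. Given a family $\eE \in D(X \times S)$ representing an $S$-point of $\mM$, the locus in $S$ where $\eE_s$ has numerical type $v$ is a union of connected components of $S$, so we may assume the type is fixed. It remains to show that the locus where $\eE_s \in \pP(\phi)$ is open. Here one uses boundedness from step (a) applied to the finitely many possible numerical types of Harder–Narasimhan subobjects of phase strictly greater than $\phi$: by Proposition~\ref{wallcham} and the bounded mass property, only finitely many such classes $v'$ arise for potential destabilizing subobjects of $\eE_s$ across $s\in S$. For each such $v'$, the relative $\Quot$-type stack parametrizing pairs $(s, F\hookrightarrow \eE_s)$ with $F\in \pP(\phi')$ of numerical type $v'$ and $\phi' > \phi$ is of finite type over $S$, and its image in $S$ is constructible; excluding these finitely many closed images (after taking closures) yields the open locus of $\sigma$-semistable fibers.

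The main obstacle will be the openness argument in step (b), specifically verifying that the locus of $s\in S$ admitting a destabilizing subobject in the heart $\pP((\phi,\phi+1])$ is closed. This requires both the finiteness of possible destabilizing numerical types (which comes from Proposition~\ref{wallcham} and boundedness, propagated to the family via~\cite{LIE}) and the existence of relative Quot-type moduli in the derived/perverse setting, for which the arguments of~\cite{Tst3} carry over essentially verbatim because the heart $\pP((\phi,\phi+1])$ is, up to shift, the one appearing in the tilting construction used there. Once (b) is established, combining with (a) gives that $\mM^{(v,\phi)}(\sigma)$ is an open substack of $\mM$ with bounded set of $\mathbb{C}$-points, hence an Artin stack of finite type over $\mathbb{C}$.
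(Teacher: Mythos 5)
Your plan follows the same skeleton that the paper compresses into citations (boundedness via Lemma~\ref{bounded} for rational central charges, then the openness and deformation arguments of \cite[Lemma 3.13(ii), Proposition 3.18, Theorem 3.20 Step 1]{Tst3}), but as written both of your steps have genuine gaps. In step (a) you invoke ``a chamber $\cC$ containing $\sigma$'' and transfer semistability to a rational $\sigma'\in\cC$. However the proposition is needed for every $\sigma\in\overline{U}_X$, and the points that actually matter later (the boundary stability conditions of Proposition~\ref{pt}(iii), and any $\sigma$ at which strictly semistable objects of type $v$ exist) lie on walls, where no chamber contains $\sigma$ and where a $\sigma$-semistable object can be unstable at every nearby rational point; so ``perturb and invoke Lemma~\ref{bounded}'' does not apply directly. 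The repair is the one the paper itself uses inside the proof of Lemma~\ref{bounded}: take the Harder--Narasimhan factors at the nearby rational point, observe that their numerical classes range over a finite set by the bounded-mass/compactness argument, apply boundedness to each factor, and use that iterated extensions of bounded families are bounded. (A smaller slip in the same step: the set of \emph{all} objects of numerical type $v$ does not have bounded mass --- consider $E\oplus G\oplus G[1]$ --- so before applying Proposition~\ref{wallcham} you must restrict $\sS$ to objects that are semistable at some point of $\overline{\uU}$, e.g.\ with $\lvert Z'(E)\rvert \le \lvert Z'(v)\rvert$.)

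In step (b) the logic does not close: constructibility of the locus of fibres admitting a destabilizing subobject, followed by ``excluding these finitely many closed images (after taking closures)'', only produces some open set which may be strictly smaller than the semistable locus, so it does not show that $\mM^{(v,\phi)}(\sigma)\hookrightarrow\mM$ is an open immersion. What is required is that the destabilized locus itself is closed (equivalently, openness of semistability in families), which needs a specialization/properness argument for the relative filtration or Quot-type stack in the tilted heart --- and this is exactly the content you defer to ``the arguments of \cite{Tst3} carry over,'' i.e.\ the substance of \cite[Proposition 3.18, Theorem 3.20]{Tst3} that the paper's two-line proof simply invokes. Until that closedness is supplied (or those results are cited as the paper does, including the step that bootstraps from rational $\sigma_{(B,\omega)}$ to arbitrary $\sigma\in\overline{U}_X$), the argument is incomplete at its central point.
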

   \begin{proof}
   For $B$ and $\omega$ are rational, the set of 
   $\mathbb{C}$-valued points of $\mM^{(v, \phi)}(\sigma_{(B, \omega)})$
   is bounded by Lemma~\ref{bounded}. 
   Hence by~\cite[Lemma 3.13 (ii), Proposition 3.18]{Tst3}
   the result is true for such $\sigma_{(B,\omega)}$. 
   Then one can apply~\cite[Theorem 3.20, Step 1]{Tst3}, 
   and conclude the result for any $\sigma \in \overline{U}_X$.  
\end{proof}

   \section{Motivic Gopakumar-Vafa invariants}\label{Mot}
   The purpose of this section is to introduce the invariants
    $n_g^{\beta}(X)\in \mathbb{Z}$, from a certain motivic 
    invariant of varieties over a Chow variety. 
   \subsection{Motivic invariants of varieties}
  Let $A$ be a projective variety over $\mathbb{C}$. 
  First let us recall the Grothendieck group of varieties over $A$. 
  \begin{defi}\emph{
  We define the $\mathbb{Z}$-module $K_0 (\Var /A)$ to be
  $$K_0 (\Var /A)= \bigoplus \mathbb{Z}[(X, \pi)]/\sim, $$
  where $[(X,\pi)]$ is an isomorphism class of
   a pair of a quasi-projective variety $X$ together with a 
   morphism $\pi \colon X\to A$.  
  The equivalence relation $\sim$ is generated by 
  $$[(X, \pi)]=[(Z, \pi|_{Z})]+[(X\setminus Z, \pi |_{X\setminus Z})],$$
  for closed subvarieties $Z\subset X$.} 
  \end{defi}
  Let $\pi \colon X\to A$ be a projective 
  morphism with $X$ smooth and connected.
  There is an induced morphism $X \to \widetilde{A}$, where
  $\widetilde{A}$ is the normalization of $\pi (A)$. 
  Then as in Paragraph~\ref{relative}, $IH^{\ast}(X)$ 
  carries an
  $(\mathfrak{sl}_2)_{L}\times (\mathfrak{sl}_2)_{R}$-action 
 with respect to the morphism 
 $X\to \widetilde{A}$. Let $\nu _l^{\alpha} \in \mathbb{Z}$ 
  be the number of Jordan cells in $IH^{\ast}(X)$
  for this action, 
  defined in (\ref{nu}). 
  We set $P(X,\pi)\in \mathbb{Z}[t,s]$ as follows, 
 $$P(X, \pi)= t^{\dim X}\sum _{\alpha, l} 
 \nu _{l}^{\alpha}t^{\alpha}s^{l-1}.$$
  We show the following proposition. 
   \begin{prop}
   There exists a map, 
   $$\Upsilon _A \colon K_0(\Var /A) \lr \mathbb{Z}[t, s],$$
   such that for any projective morphism $\pi \colon X \to A$ with 
   $X$ smooth and connected, we have 
   $$\Upsilon _A ([(X, \pi)])= P(X, \pi).$$
   \end{prop}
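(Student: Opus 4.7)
The plan is to apply the relative form of Bittner's presentation of the Grothendieck group of varieties. By Bittner's theorem, $K_0(\Var /A)$ is generated by classes $[(X,\pi)]$ with $X$ smooth projective (and, by additivity over disjoint unions, we may reduce to the case $X$ connected), subject only to $[\emptyset]=0$ and the blow-up relation
$$[(\widetilde{X}, \widetilde{\pi})] - [(E, \widetilde{\pi}|_E)] = [(X, \pi)] - [(Y, \pi|_Y)],$$
where $Y \hookrightarrow X$ is a smooth closed subvariety, $\widetilde{X} = \Bl_Y X$, and $E = \mathbb{P}(N_{Y/X}) \subset \widetilde{X}$ is the exceptional divisor. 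It therefore suffices to set $\Upsilon_A([(X,\pi)]) \cneq P(X,\pi)$ on such generators, extend additively over components, and verify compatibility with this relation. Along the way I would record that $P(X,\pi)$ itself is independent of the auxiliary choices of $H_A$ and relative ample $H_{X/\widetilde{A}}$, by repeating the inductive argument of Lemma~\ref{choice} in the $(\mathfrak{sl}_2)_L \times (\mathfrak{sl}_2)_R$-setting.

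The heart of the proof is the identity
$$P(\widetilde{X}, \widetilde{\pi}) + P(Y, \pi|_Y) = P(X, \pi) + P(E, \widetilde{\pi}|_E).$$
The starting observation is the classical blow-up formula combined with the projective bundle formula for $E \to Y$: writing $r = \codim_X Y$, one has
$$H^{\ast}(\widetilde{X}) \cong H^{\ast}(X) \oplus \bigoplus_{i=1}^{r-1} H^{\ast -2i}(Y), \qquad H^{\ast}(E) \cong \bigoplus_{i=0}^{r-1} H^{\ast -2i}(Y),$$
so that the two sides of the identity agree as graded vector spaces. To upgrade this to equality of the $(\mathfrak{sl}_2)_R$-Jordan cell counts $\nu_l^{\alpha}$, I would use that $\eta_R = H_A \wedge$ is cup product with a class pulled back from $\widetilde{A}$, and that all the maps entering the blow-up and projective bundle decompositions (pullback along $\widetilde{X}\to X$ and $E\to Y$, together with cup product with the Chern class of the tautological bundle on $E$) commute with cup product by pullbacks from $\widetilde{A}$. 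Hence these isomorphisms are $\eta_R$-equivariant and preserve the $h_L + h_R$ grading, so the Jordan cells can be matched termwise.

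The main obstacle is upgrading this argument from ordinary cohomology to the perverse/relative Lefschetz picture, since the images of $Y \to A$ and $E \to A$ in general differ from that of $X \to A$, and the relative Lefschetz action is defined via the perverse Leray spectral sequence attached to each normalized image $\widetilde{A}$ separately. I would address this by applying the BBD decomposition theorem to $\dR \pi_{\ast} \iI C_X$ and $\dR \widetilde{\pi}_{\ast} \iI C_{\widetilde{X}}$, identifying the summands on the two sides via the blow-up formula at the level of perverse sheaves on $A$, from which $\eta_R$-equivariance is automatic. Once this is in place, additivity of $\nu_l^{\alpha}$ over direct sums of $(\mathfrak{sl}_2)_R$-modules yields the desired identity, and hence the existence of $\Upsilon_A$.
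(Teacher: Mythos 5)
Your proposal is correct and follows the same overall route as the paper: Bittner's presentation reduces everything to the blow-up relation, and that relation is verified through the classical blow-up and projective-bundle decompositions of cohomology, upgraded to be $e_R$-equivariant (this is exactly the content of the paper's Lemma~\ref{blow}, proved via the Griffiths--Harris formula and the projection formula, followed by the bookkeeping with the $t^{\dim X}$-normalization). The one place you diverge is the ``main obstacle'' paragraph. The paper does not invoke the decomposition theorem for $\dR\pi_{\ast}\iI C_X$ at all: it observes that $\nu_l^{\alpha}$ depends only on the total ($h_L+h_R$) grading and the single operator $e_R$, and that since the map from the normalization of each image to $A$ is finite, one may (by the independence of the choice of ample divisor, the relative analogue of Lemma~\ref{choice}, together with Lemma~\ref{finite}) compute every term $P(X^{\dag},\pi^{\dag})$, $P(E,\pi^{\dag}\circ j)$, $P(X,\pi)$, $P(Z,\pi\circ i)$ using cup product with the pullback of one fixed ample divisor $H_A$ on $A$, regardless of what the images are; equivariance of the blow-up and bundle isomorphisms for this one operator is then immediate from the projection formula. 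So where you reach for the BBD decomposition theorem and a matching of perverse summands, the paper gets by with a finite-morphism reduction and an elementary equivariance check; your route would also work, but it is strictly heavier machinery for a step that, with the observation about what $\nu_l^{\alpha}$ actually depends on, does not require it.
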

   \begin{proof}
    Let $X$ be a connected smooth projective 
   variety with a morphism $\pi \colon X\to A$.  
   Let $Z\subset X$ be a smooth closed subvariety and 
   take the blow-up $p\colon X^{\dag} \to X$ along $Z$, 
   \begin{align}\label{diag}
   \xymatrix{
   E \ar[r]^{j} \ar[d]_{q} & X^{\dag}\ar[d]_{p} \ar[dr]^{\pi^{\dag}} &\\
   Z \ar[r]_{i} & X \ar[r]_{\pi} & A.
   }\end{align}
   By the result of~\cite[Theorem 5.1]{Bit}, 
   $K_0(\Var/A)$ is generated by such $[(X,\pi)]$ with relation 
    $$[(X^{\dag},\pi^{\dag})]-[(E, \pi^{\dag}\circ j)]=
     [(X, \pi)]-[(Z,\pi\circ i)].$$
  Hence it is enough 
   to show that 
   \begin{align}\label{en}
   P(X^{\dag},\pi^{\dag})-P(E, \pi^{\dag}\circ j)= P(X, \pi)-P(Z,\pi\circ i).\end{align}
   Let $d=\dim X$ and $r$ be the codimension of $Z$ in $X$. 
   Note that $q\colon E\to Z$ is a $\mathbb{P}^{r-1}$-bundle. 
   By the isomorphism (\ref{iso1}) in Lemma~\ref{blow} below,
we have 
   \begin{align*}
  \frac{P(X^{\dag},\pi^{\dag})}{t^d}=\frac{P(X, \pi)}{t^d}+\frac{t^{-r+2}P(Z,\pi\circ i)}{t^{d-r}}
  +\cdots + \frac{t^{r-2}P(Z,\pi\circ i)}{t^{d-r}},
  \end{align*}
   since $\mu_{l}^{\alpha}$
 is determined by the grading and $e_R$-actions.  
 Thus we have 
  \begin{align*}
  P(X^{\dag}, \pi^{\dag})=P(X,\pi)+t^{2}P(Z,\pi\circ i)+ \cdots +t^{2(r-1)} P(Z,\pi\circ i).\end{align*}
  Similarly the isomorphism (\ref{iso2}) in Lemma~\ref{blow}
  yields,  
  $$P(E,\pi^{\dag}\circ j)= P(Z,\pi\circ i)+t^2 P(Z,\pi\circ i)+ \cdots + t^{2(r-1)}P(Z,\pi\circ i).$$
   Hence the equation (\ref{en}) holds. 
      \end{proof}
      
  We have used the following lemma. 
  \begin{lem}\label{blow}
  In the diagram (\ref{diag}), we have the
  following isomorphisms, 
  \begin{align}
  \label{iso1}
   IH^{\ast}(X^{\dag}) & \cong IH^{\ast}(X) \oplus 
   \bigoplus _{i=0}^{r-2}IH^{\ast}(Z)[-r+2+2i], \\
    \label{iso2}
   IH^{\ast}(E)&\cong \bigoplus_{i=0}^{r-1}
   IH^{\ast}(Z)[-r+1+2i], 
  \end{align}
  which preserve $e_R$-actions. 
  \end{lem}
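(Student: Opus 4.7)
My plan is to reduce both isomorphisms to classical blow-up and projective bundle statements in singular cohomology, and then check $e_R$-compatibility by noting that $e_R$ acts as cup product with a class pulled back from the base $A$ (or its normalization $\widetilde{A}$).

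Since $X$, $X^{\dag}$, $Z$, and $E$ are all smooth and connected, intersection cohomology coincides with ordinary singular cohomology (up to the overall degree shift built into the convention $IH^{i}(Y)=H^{i+\dim Y}(Y,\mathbb{C})$). For the isomorphism (\ref{iso2}), the projective bundle formula applied to $E=\mathbb{P}(N_{Z/X}) \to Z$ gives
\[
H^{\ast}(E,\mathbb{C})=\bigoplus_{i=0}^{r-1} q^{\ast}H^{\ast}(Z,\mathbb{C})\cdot \xi^{i},
\]
with $\xi=c_{1}(\oO_{E}(1))\in H^{2}(E)$. Translating through the dimension shifts $\dim E=\dim Z+r-1$, the $i$-th summand on the right becomes exactly $IH^{\ast}(Z)[-r+1+2i]$, giving (\ref{iso2}) as graded vector spaces. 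For the isomorphism (\ref{iso1}), I would invoke the classical blow-up formula (a special case of the BBD decomposition theorem applied to $p$),
\[
\dR p_{\ast}\mathbb{C}_{X^{\dag}}\cong \mathbb{C}_{X}\oplus \bigoplus_{k=1}^{r-1} i_{\ast}\mathbb{C}_{Z}[-2k],
\]
and take hypercohomology; the resulting identification $H^{\ast}(X^{\dag})\cong H^{\ast}(X)\oplus\bigoplus_{k=1}^{r-1} H^{\ast-2k}(Z)$ then translates (via the same dimension bookkeeping, with $k$ parameterised as $k=r-1-i$) into (\ref{iso1}).

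For the $e_{R}$-equivariance I use the functoriality of these decompositions. By the construction of Paragraph~\ref{relative}, $e_{R}$ acts on $IH^{\ast}(X^{\dag})$ as cup product with $(\pi^{\dag})^{\ast}H_{A}=p^{\ast}\pi^{\ast}H_{A}$ and on $IH^{\ast}(E)$ as cup product with $(\pi^{\dag}\circ j)^{\ast}H_{A}=q^{\ast}(\pi\circ i)^{\ast}H_{A}$. In the projective bundle case, $q^{\ast}(\pi\circ i)^{\ast}H_{A}$ lies in $q^{\ast}H^{\ast}(Z)$, so cup product with it preserves each summand $q^{\ast}H^{\ast}(Z)\cdot\xi^{i}$ and acts there as cup product with $(\pi\circ i)^{\ast}H_{A}$ on $H^{\ast}(Z)$; this matches the $e_{R}$-action on the right-hand side. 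In the blow-up case, since $p^{\ast}\pi^{\ast}H_{A}$ is the pullback of a class from $X$, multiplication by it is a morphism in $D^{b}(\mathbb{C}_{X})$ between $\dR p_{\ast}\mathbb{C}_{X^{\dag}}$ and its shift, and hence respects the decomposition theorem splitting; on the $\mathbb{C}_{X}$ factor it induces cup product with $\pi^{\ast}H_{A}$, while on each $i_{\ast}\mathbb{C}_{Z}[-2k]$ factor the projection formula forces it to restrict to cup product with $i^{\ast}\pi^{\ast}H_{A}=(\pi\circ i)^{\ast}H_{A}$ on $Z$.

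The main obstacle is making the $e_{R}$-compatibility in (\ref{iso1}) precise: one needs to know that the BBD splitting in the blow-up case can be chosen so as to intertwine cup product by a pulled-back class. The cleanest way to handle this is to argue entirely inside $D^{b}(\mathbb{C}_{X})$, where multiplication by $\pi^{\ast}H_{A}$ is a natural transformation between a complex and its shift, and any direct-sum decomposition is automatically preserved by such natural transformations when the summands are pairwise Hom-disjoint after appropriate shifts. I would record this as a short categorical observation and then read off the desired $e_{R}$-equivariant splitting on hypercohomology, completing the proof of both isomorphisms.
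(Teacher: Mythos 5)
Your treatment of (\ref{iso2}) is exactly the paper's: the projective bundle formula with $\xi=c_1(\oO_E(1))$, and $e_R$-equivariance because the relevant class is pulled back from $Z$. For (\ref{iso1}) you take a genuinely different route. The paper never passes through $\dR p_{\ast}\mathbb{C}_{X^{\dag}}$: it uses the explicit Griffiths--Harris isomorphism (\ref{shown}), $v\mapsto (p_{\ast}v, j^{\ast}v)$, onto $H^{\ast}(X)\oplus H^{\ast}(E)/q^{\ast}H^{\ast}(Z)$, checks $e_R$-equivariance by the two identities $\pi^{\ast}H_A\wedge p_{\ast}v=p_{\ast}(p^{\ast}\pi^{\ast}H_A\wedge v)$ and $j^{\ast}\pi^{\dag\ast}H_A\wedge j^{\ast}v=j^{\ast}(\pi^{\dag\ast}H_A\wedge v)$, and then combines this with (\ref{iso2}); this is elementary and never requires choosing a sheaf-level splitting equivariantly, which is precisely the point you yourself flag as the main obstacle.

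The flaw in your write-up is the resolution you offer for that obstacle. The ``categorical observation'' you plan to record --- that a map $F\to F[2]$ respects a direct-sum decomposition provided the summands are pairwise Hom-disjoint after the shift --- cannot be applied here, because the hypothesis fails for $\dR p_{\ast}\mathbb{C}_{X^{\dag}}\cong \mathbb{C}_X\oplus\bigoplus_{k=1}^{r-1}i_{\ast}\mathbb{C}_Z[-2k]$: for instance $\Hom\bigl(\mathbb{C}_X, (i_{\ast}\mathbb{C}_Z[-2])[2]\bigr)=\Hom(\mathbb{C}_X,i_{\ast}\mathbb{C}_Z)\cong H^0(Z)\neq 0$, and similarly between consecutive $i_{\ast}\mathbb{C}_Z[-2k]$ summands; likewise the earlier phrase ``it is a morphism between $\dR p_{\ast}\mathbb{C}_{X^{\dag}}$ and its shift, and hence respects the splitting'' is not valid for an arbitrary morphism. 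Fortunately no Hom-vanishing is needed: cup product with $\pi^{\ast}H_A$ is the value at $F=\dR p_{\ast}\mathbb{C}_{X^{\dag}}$ of a natural transformation $\alpha\colon \id\Rightarrow[2]$ (tensoring with $\pi^{\ast}H_A\in\Hom(\mathbb{C}_X,\mathbb{C}_X[2])$), and naturality applied to the inclusions $\iota_i$ and projections $\mathrm{pr}_j$ of any decomposition $F\cong\bigoplus_i F_i$ gives $\mathrm{pr}_j[2]\circ\alpha_F\circ\iota_i=(\mathrm{pr}_j\circ\iota_i)[2]\circ\alpha_{F_i}$, which vanishes for $i\neq j$ and equals the intrinsic action of $\pi^{\ast}H_A$ on $F_i$ for $i=j$. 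By the projection formula that intrinsic action on $i_{\ast}\mathbb{C}_Z[-2k]$ is cup product with $(\pi\circ i)^{\ast}H_A$ on $H^{\ast-2k}(Z)$, and on hypercohomology of $\dR p_{\ast}\mathbb{C}_{X^{\dag}}$ it is cup product with $p^{\ast}\pi^{\ast}H_A=(\pi^{\dag})^{\ast}H_A$. With this substitution (naturality instead of Hom-disjointness) your argument for (\ref{iso1}) is complete; alternatively, the paper's explicit $(p_{\ast},j^{\ast})$ argument sidesteps the issue entirely.
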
    
  \begin{proof}
  Let $\oO_E(1)$ be the tautological line bundle 
  on $E$ and set $\xi=c_1(\oO_E(1)) \in H^2(E, \mathbb{C})$. 
  We have the following isomorphism, 
 \begin{align}\label{iso3}
 H^{\ast}(Z, \mathbb{C})\oplus 
  H^{\ast}(Z, \mathbb{C})[-2] \oplus 
  \cdots \oplus H^{\ast}(Z, \mathbb{C})[-2(r-1)] 
  \stackrel{\cong}{\lr} H^{\ast}(E, \mathbb{C}), \end{align}
  which sends $(v_0, v_1, \cdots, v_{r-1})$ to 
  $\sum \xi^i q^{\ast}v_i$. 
  It is obvious that (\ref{iso3}) preserves $e_R$-actions, 
  hence (\ref{iso2}) follows. 
  
  Next in~\cite[p605]{GH}, it is shown that we have the isomorphism, 
  \begin{align}\label{shown}
  H^{\ast}(X^{\dag}, \mathbb{C}) \stackrel{\cong}{\lr}
  H^{\ast}(X, \mathbb{C}) \oplus 
  \left( H^{\ast}(E, \mathbb{C})/q^{\ast}H^{\ast}(Z, \mathbb{C}) \right),
  \end{align}
  which sends $v\in H^{\ast}(X, \mathbb{C})$ to 
  $(p_{\ast}v, j^{\ast}v)$. For an ample divisor $H_A$ on $A$, 
  we have 
  $$\pi^{\ast}H_A \wedge p_{\ast}v=p_{\ast}(p^{\ast}\pi^{\ast}H_A \wedge v), 
  \quad j^{\ast}\pi^{\dag \ast}H_A \wedge j^{\ast}v=
  j^{\ast}(\pi^{\dag \ast}H_A \wedge v), $$
  Hence the isomorphism (\ref{shown}) preserves $e_R$-actions. 
  Using (\ref{shown}), (\ref{iso2}), 
  we obtain the isomorphism (\ref{iso1}).   
  \end{proof}
 In the following, we compute some examples of our 
 invariant $P(X, \pi)$. 
 \begin{exam}\emph{
 (i) Let $A$ be a 
 $d$-dimensional smooth projective variety and consider the element
 $[(A, \id)] \in K_0(\Var /A)$.  
 Let $b_i(A) \in \mathbb{Z}$ be the $i$-th betti number of $A$.
 Then one can easily see  
$$\nu_{l}^{\alpha}= \left\{ \begin{array}{cl}
b_{d+\alpha}(A)-b_{d+\alpha -2}(A), & \mbox{ if }l=-\alpha +1, \\
0, & \mbox{ otherwise. }
\end{array}\right. $$ 
Hence we have 
$$P(A, \id)=
\sum_{\alpha=0}^{d}(b_{\alpha}(A)-b_{\alpha-2}(A))t^{\alpha}s^{-\alpha+d}.$$
(ii) Let $\pi \colon X \to A$ be a projective bundle with fiber 
$\mathbb{P}^{r-1}$. Then applying (i) and (\ref{iso2}), 
we have 
\begin{align*}
P(X, \pi)&=P(A, \id)(1+ t^2 +\cdots +t^{2(r-1)}), \\
&= \sum_{\alpha=0}^{d}\sum_{k=0}^{r-1}
(b_{\alpha}(A)-b_{\alpha-2}(A))t^{\alpha+2k}s^{-\alpha+d}.
\end{align*}
(iii) Let $i\colon Z\hookrightarrow A$
 be a smooth subvariety of codimension $r$. 
Let $\pi \colon X \to A$ be a blow-up along $Z$. Using (\ref{iso1}), we have
\begin{align*}
P(X, \pi)&= P(A, \id)+ P(Z, i)(t^2+t^4+ \cdots + t^{2(r-1)}), \\
&=\sum_{\alpha=0}^{d}(b_{\alpha}(A)-b_{\alpha-2}(A))t^{\alpha}s^{-\alpha+d}
+ \sum_{\alpha=0}^{d-r}\sum_{k=1}^{r-1}
(b_{\alpha}(Z)-b_{\alpha-2}(Z))t^{\alpha+2k}s^{-\alpha+d-r}.\end{align*}
 Here we have used Lemma~\ref{finite} below, which 
 shows $P(Z, i)=P(Z, \id)$. 
 Lemma~\ref{finite} will also be used in Lemma~\ref{comm}}
 \end{exam}
 \begin{lem}\label{finite}
 Let
 $u\colon A \to A'$ be a finite morphism between 
 projective varieties. Then the following diagram commutes, 
 $$\xymatrix{
   K_0(\Var/A) \ar[r]^{\Upsilon_{A}}\ar[d]_{u_{\ast}} & \mathbb{Z}[t,s] \\
   K_0(\Var/A')
   \ar[ru]_{\Upsilon_{A'}}. & }$$
   Here $u_{\ast}$ takes $\pi \colon X\to A$ to 
   $u\circ \pi \colon X\to A \to A'$. 
 \end{lem}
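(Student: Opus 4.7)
I would first reduce the question to the generators. By definition of $\Upsilon_A$, for any smooth projective connected variety $X$ with a morphism $\pi \colon X \to A$ we have $\Upsilon_A([(X,\pi)]) = P(X,\pi)$, and $u_{\ast}[(X,\pi)] = [(X, u\circ \pi)]$. Hence commutativity of the diagram reduces to the equality
$$P(X,\pi) = P(X, u\circ \pi).$$
By the remark following the definition of $\nu_l^{\alpha}$, the count $\nu_l^{\alpha}$ depends only on the $e_R$-action and the grading by $h_L+h_R$ on $IH^{\ast}(X)$. So it suffices to show that these two pieces of structure coincide whether computed via $\pi$ or via $u\circ \pi$.

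Next I would set up the comparison using normalizations of images. As $X$ is normal, the map $X \to \pi(X) \subset A$ factors through the normalization $\widetilde{A}$ of $\pi(X)$; likewise the map $X \to u(\pi(X)) \subset A'$ factors through the normalization $\widetilde{A'}$ of $u(\pi(X))$. Since $u$ is finite, the surjection $\pi(X) \twoheadrightarrow u(\pi(X))$ is finite, and taking normalizations yields a finite morphism $\nu \colon \widetilde{A} \to \widetilde{A'}$ compatible with the two maps from $X$, giving the commutative triangle
$$\xymatrix{X \ar[r] \ar[rd] & \widetilde{A} \ar[d]^{\nu} \\ & \widetilde{A'}.}$$

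Finally I would choose any ample divisor $H'$ on $\widetilde{A'}$ and set $H \cneq \nu^{\ast}H'$; since the pullback of an ample divisor by a finite morphism is ample, $H$ is ample on $\widetilde{A}$. The Lefschetz operator defining the $e_R$-action on $IH^{\ast}(X)$ via $X \to \widetilde{A}$ is cup product with the pullback of $H$ to $X$, which by construction agrees with the operator defined via $X \to \widetilde{A'}$ using $H'$. Moreover, on the $E_2$-terms $E_2^{r,s} \subset IH^{r+s}(X)$ of the perverse Leray spectral sequence, the hard Lefschetz isomorphisms $\eta_R^r \colon E_2^{-r,s} \stackrel{\cong}{\to} E_2^{r,s}$ and $\eta_L^s \colon E_2^{r,-s} \stackrel{\cong}{\to} E_2^{r,s}$ force $h_R$ and $h_L$ to act by the scalars $r$ and $s$ respectively, so $h_L+h_R$ acts by $r+s$, i.e.\ by the cohomological degree. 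Thus the grading by $h_L+h_R$ on $IH^{\ast}(X)$ is intrinsic and independent of the base variety. It follows that the Jordan cell counts $\nu_l^{\alpha}$ coincide in the two cases, and $P(X,\pi) = P(X,u\circ\pi)$.

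The main subtlety I anticipate is confirming that the $h_L+h_R$-grading really is intrinsic to $IH^{\ast}(X)$, so that switching between the base varieties $\widetilde{A}$ and $\widetilde{A'}$ does not shift it; once this is in hand, matching the $e_R$-operators by means of a common pulled-back ample class along the finite morphism $\nu$ is routine, and the equality of Jordan cell counts follows formally.
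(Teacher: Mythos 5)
Your proof is correct and follows essentially the same route as the paper: reduce to smooth generators, note that $\nu_l^{\alpha}$ depends only on the $e_R$-operator and the intrinsic $h_L+h_R$ (total) grading, and make the two $e_R$-actions literally coincide by pulling back an ample divisor along the finite map. Your extra care with the normalizations $\widetilde{A}\to\widetilde{A'}$ just fills in a detail the paper elides by working with $H_A=u^{\ast}H_{A'}$ directly and invoking independence of the choice of ample divisor.
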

  \begin{proof}
  It is enough to check the commutativity for 
  $[(X, \pi)] \in K_0(\Var/A)$, where
  $X$ is a smooth projective variety and 
   $\pi\colon X\to A$ is a morphism. 
  Let $H_{A'}$ be an ample divisor on $A'$, and set 
  $H_A=u^{\ast}H_{A'}$. Note that the 
  divisor $H_A$ is also ample because $u$ is finite. 
  Since the definition of $\Upsilon_A$ does not depend on 
  a choice of an ample divisor on $A$, one may compute 
  $\Upsilon_A(X, \pi)$ using the $e_R$-action 
  on $IH^{\ast}(X)$ given by 
  $\pi^{\ast}H_A \wedge \ast$.  
  Also one may compute $\Upsilon_{A'}(X, u\circ \pi)$
  by the $e_R$-action 
  given by $(u\circ \pi)^{\ast}H_{A'}\wedge \ast$. 
  Since $H_A=u^{\ast}H_{A'}$, both $e_R$-actions are same, 
  hence $\Upsilon_A(X, \pi)=\Upsilon_{A'}(X, u\circ \pi)$. 
  \end{proof}
      \begin{rmk}\label{high}\emph{
  For $Q =\sum q^{a,b}t^a s^b\in \mathbb{Z}[t,s]$, let $m(Q)$ be the integer 
  $$m(Q) = \max \{a+2b \mid q^{a,b}\neq 0 \}.$$
    Let 
   $\pi \colon X\to A$ be a projective morphism 
   with $X$ smooth and connected.  
   Then it is clear that $m(Q)=2\dim X$
   for $Q=\Upsilon _A([(X,\pi)])$. 
   In particular one can recover 
   $\nu _{l}^{\alpha}$ from $t^{-m(Q)/2}Q$. 
   Using this and the motivic property of $\Upsilon _A$,
    one can easily show that 
   for any quasi-projective variety $X$
    with a morphism $\pi \colon X \to A$ and
  $Q=\Upsilon _A([(X,\pi)])$, the integer $m(Q)$ is even.   }
   \end{rmk}
   Based on Remark~\ref{high} we introduce the following notation,
   which will be used in Paragraph~\ref{CouIn}.  
   \begin{defi}\label{daggar}\emph{
   We define the subset $\mathbb{Z}[t,s]^{\dag} \subset \mathbb{Z}[t,s]$
   to be the set of polynomials $Q$ such that
   $m(Q)$ is even. 
   For $Q \in \mathbb{Z}[t,s]^{\dag}$, we define 
   the operation $Q^{\flat}$ by 
   $$Q^{\flat}\cneq t^{-m(Q)/2}Q \in \mathbb{Z}[t^{-1}, t, s].$$}
   \end{defi} 
   When $A=\Spec \mathbb{C}$, we write $K_0(\Var/A)$ as 
  $K_0(\Var/\mathbb{C})$, and just write its elements as
  $[X]\in K_0(\Var/\mathbb{C})$ omitting the structure morphism 
  $X\to \Spec \mathbb{C}$. 
     Also we write $\Upsilon_{\Spec \mathbb{C}}$ 
   as $\Upsilon$ for simplicity.  
   \begin{rmk}\emph{
   If $A=\Spec \mathbb{C}$, all the Jordan cells have length one. 
   Hence $\Upsilon(X) \in \mathbb{Z}[t]$
   is nothing but the virtual Poincar\'e polynomial of $X$.} 
   \end{rmk}
   There is a ring structure on $K_0(\Var /\mathbb{C})$ 
   defined by 
   $$([X_1], [X_2]) \longmapsto [X_1 \times X_2],$$
   with unit $[\Spec \mathbb{C}]$.    
   Also we have the following natural map, 
   $$\Pi \colon 
   K_0(\Var /\mathbb{C}) \times K_0(\Var /A) 
   \lr K_0(\Var /A),$$
   which takes the pair $([T], [(X,\pi)])$ to 
   $$T\times X \stackrel{\mathrm{pr}}{\lr} X \stackrel{\pi}{\lr} A,$$
   where $\mathrm{pr}$ is the projection to $X$. 
   The operation $\Pi$ makes $K_0(\Var /A)$ a $K_0(\Var /\mathbb{C})$-algebra. 
   We have the following lemma. 
   \begin{lem}\label{fol}
   The following diagram is commutative. 
   $$\xymatrix{
  \Pi\colon
   K_0(\Var /\mathbb{C}) \times K_0(\Var /A) \ar[d]_{\Upsilon
   \times \Upsilon _A} \ar[r]
   &K_0(\Var /A) \ar[d]_{\Upsilon _A} \\
   \mathbb{Z}[t] \times \mathbb{Z}[t,s] \ar[r] & \mathbb{Z}[t,s],
   }$$
   where the bottom arrow takes $(Q_1(t), Q_2(t,s))$ to 
   $Q_1(t)Q_2(t,s)$. 
      \end{lem}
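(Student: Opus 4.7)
The plan is to reduce the identity to generators by a motivic argument, then verify it by an explicit Künneth-type computation on intersection cohomology.

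Both composites around the diagram are linear in each argument and respect Bittner's blow-up relation in each variable. Indeed, if $X^{\dag}\to X$ is the blow-up along a smooth center $Z$ with exceptional divisor $E$, then base-changing by $T$ produces the blow-up of $T\times X$ along the smooth center $T\times Z$ with exceptional divisor $T\times E$, and similarly in the other variable; since $\Upsilon_{A}$ and $\Upsilon$ already factor through these relations by construction, it suffices to check the identity on generator pairs $[T]$ and $[(X,\pi)]$ with $T$ and $X$ smooth, connected and projective, and $\pi\colon X\to A$ projective.

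For such generators, the composition $\pi\circ\mathrm{pr}\colon T\times X\to A$ factors through $\mathrm{pr}\colon T\times X\to X$, so the normalization of its image coincides with that of $\pi(X)$, and the $e_{R}$-operator on $IH^{\ast}(T\times X)$ (cup product with the pullback of an ample divisor from this base) acts only on the second factor. Combining the Künneth isomorphism $\iI C_{T\times X}\cong \iI C_{T}\boxtimes \iI C_{X}$ (valid since $T$ and $X$ are smooth) with the projection and proper base change formulas in the perverse Leray spectral sequence for $\pi\circ\mathrm{pr}$ yields a decomposition
\begin{align*}
{}^{p}\!R^{s}(\pi\circ\mathrm{pr})_{\ast}\iI C_{T\times X} \;=\; \bigoplus_{i} IH^{i}(T)\otimes {}^{p}\!R^{s-i}\pi_{\ast}\iI C_{X},
\end{align*}
which induces an $e_{R}$-equivariant isomorphism $IH^{\ast}(T\times X)\cong IH^{\ast}(T)\otimes IH^{\ast}(X)$ in which $e_{R}=\id\otimes e_{R,X}$ and the $(h_{L}+h_{R})$-grading is the sum of the intersection cohomology gradings on the two factors.

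Consequently, every Jordan cell of $e_{R}$ on $IH^{\ast}(T\times X)$ is of the form $v\otimes C$, where $v$ is a homogeneous element of a chosen basis of $IH^{\ast}(T)$ and $C$ is a Jordan cell of $e_{R,X}$ on $IH^{\ast}(X)$; its size equals that of $C$ and its minimal degree is shifted by the intersection cohomology degree of $v$. Since $[T]\in K_{0}(\Var/\mathbb{C})$ corresponds to $T\to\Spec\mathbb{C}$, the operator $e_{R}$ is trivial on $IH^{\ast}(T)$, so all Jordan cells there have size $1$ with $\nu_{1}^{i}(T)=\dim IH^{i}(T)=b_{i+\dim T}(T)$, and one obtains
\begin{align*}
\nu_{l}^{\alpha}(T\times X,\,\pi\circ\mathrm{pr}) \;=\; \sum_{i+\alpha'=\alpha} b_{i+\dim T}(T)\cdot \nu_{l}^{\alpha'}(X,\pi).
\end{align*}
Multiplying by $t^{\dim T+\dim X}\,t^{\alpha}s^{l-1}$ and summing over $\alpha,l$ factors the right-hand side as $\Upsilon(T)\cdot P(X,\pi)$, which proves the identity. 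The main technical obstacle is the $e_{R}$-equivariant Künneth isomorphism together with its compatible $(h_{L}+h_{R})$-grading, which requires tracking perverse shifts carefully when tensoring $\dR\pi_{\ast}\iI C_{X}$ with the graded vector space $IH^{\ast}(T)$; modulo this, the argument is elementary polynomial bookkeeping.
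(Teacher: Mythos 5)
Your proof is correct and takes essentially the same route as the paper: reduce to smooth projective generators and apply the K\"unneth isomorphism $IH^{\ast}(T\times X)\cong IH^{\ast}(T)\otimes IH^{\ast}(X)$ with $e_R$ acting trivially on the $IH^{\ast}(T)$ factor. Your perverse Leray decomposition and explicit Jordan-cell convolution merely spell out what the paper asserts in one line, namely that this isomorphism is compatible with the gradings and $e_R$-actions, from which the product formula follows.
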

  \begin{proof}
  It is enough to check
  \begin{align}\label{rin}
  \Upsilon _A([T\times X, \pi \circ \mathrm{pr}])
  =\Upsilon(T)\cdot
   \Upsilon _A ([(X,\pi)]),\end{align}
   for smooth projective $T$ and $X$
   with a morphism $\pi \colon X\to A$. 
  We have the isomorphism as graded vector spaces, 
  $$IH^{\ast}(T\times X) \cong IH^{\ast}(T) \otimes IH ^{\ast}(X).$$
  Let us introduce the $e_R$-action
   on the right hand side, 
  by letting $e_R$ act on $IH^{\ast}(T)$ trivially. 
 Then the above isomorphism preserves the $e_R$-actions,
 which shows the 
 equality (\ref{rin}) immediately. 
  \end{proof}
  
  \begin{rmk}\label{thank}\emph{
  More generally a motivic invariant is defined to be
  a ring homomorphism $\Upsilon \colon K(\Var/ \mathbb{C}) \to 
  \Lambda$ for some ring $\Lambda$. 
  Then a motivic invariant relative to $A$ is defined 
  to be a map, 
  $$K_0(\Var/A) \otimes _{K_0(\Var/ \mathbb{C})} \Lambda 
  \lr M, $$
  for some $\Lambda$-module $M$. Lemma~\ref{fol}
  implies that $\Upsilon_A$ is obtained in this way.}
  \end{rmk}
  
  \begin{rmk}\label{fib}\emph{
  Let $\pi \colon X\to A$ be a morphism and
 $p \colon Z\to X$ be a Zariski locally trivial fibration with fiber $F$. 
 Then we have 
 $[(Z, \pi\circ p)]=\Pi ([F], [(X,\pi)])$,
 which yields
 the equality $\Upsilon _A([(Z, \pi\circ p)])=\Upsilon(F)\cdot
  \Upsilon _{A}([(X,\pi)])$
  by Lemma~\ref{fol}.
  (cf.~\cite[Lemma 4.2]{Joy5}.)} \end{rmk}

  \subsection{Motivic invariants of Artin stacks}
  Here we extend the invariant constructed in the
  previous paragraph to the invariant of Artin stacks over a
  projective variety $A$. 
  The material of this paragraph is a slight generalization of 
  Joyce's work~\cite{Joy5}. In \textit{loc.cite,} he 
  works on the motivic invariants over $K_0(\Var/\mathbb{C})$
  such as virtual Poincar\'e polynomials. For our purpose we have to 
  extend the results in~\cite{Joy5} to invariants over 
  $K_0(\Var/A)$ such as $\Upsilon _A$. However the proofs are straightforward 
  generalizations and 
  we will leave some details to readers. 
  Let $\rR$ be an Artin stack of locally finite type over 
  $\mathbb{C}$.  
  Following Joyce~\cite{Joy5}, we introduce the Grothendieck 
  group of Artin stacks over $\rR$, 
  denoted by $K_0(\St/\rR)$ in this paper. 
   It was denoted by $\SF(\rR)$ 
  in Joyce's papers~\cite{Joy2}, \cite{Joy3}, \cite{Joy4}, \cite{Joy5}. 
  \begin{defi}\label{SFD}\emph{\bf{\cite[Definition 3.1]{Joy5}}}\emph{
  We define $K_0(\St/\rR)$ to be the $\mathbb{Q}$-vector space 
  generated by equivalence classes of 
  pairs $[(\xX, \rho)]$, where $\rho \colon \xX \to \rR$ is 
  a 1-morphism of Artin stacks, $\xX$ is of finite type over $\mathbb{C}$ 
  with affine geometric stabilizers, such that for each closed 
  substack $\yY \subset \xX$, one has }
  $$[(\xX, \rho)]=[(\yY, \rho |_{\yY})]+[(\xX \setminus \yY, \rho |_{\xX 
  \setminus \yY})].$$
  \end{defi}
  
  The following lemma is a generalization of~\cite[Theorem 4.10]{Joy5}.
  Below we set $\Lambda=\mathbb{Q}(t,s)$. 
  \begin{lem}\label{general}
  Let $A$ be a projective variety. Then 
  $\Upsilon _{A}\colon K_0(\Var /A) \to \mathbb{Z}[t,s]$ 
  extends to the map, 
  $$\Upsilon _{A}' \colon K_0(\St/A) \lr \Lambda=\mathbb{Q}(t,s),$$
  such that for a 1-morphism $\rho \colon \xX \to A$
  with $\xX \cong [X/G]$, $X$ is a quasi-projective variety 
  and $G$ a special algebraic 
  $\mathbb{C}$-group (cf.~\cite[Definition 2.1]{Joy5}), we have 
  \begin{align}\label{quot}
  \Upsilon _A '([(\xX, \rho)])=\frac{\Upsilon _A([(X, \pi)])}
  {\Upsilon([G])}.\end{align}
  Here $\pi$ is the composition, 
  $\pi \colon X \to [X/G] \cong \xX \stackrel{\rho}{\to} A$. 
  \end{lem}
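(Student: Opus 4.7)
The plan is to follow Joyce's strategy from~\cite[Theorem 4.10]{Joy5} essentially verbatim, using Remark~\ref{fib} (which is the relative analogue of the multiplicative property Joyce needs) as the key input. First I would define $\Upsilon_A'$ on a pair $[(\xX,\rho)]$ with $\xX \cong [X/G]$ by the right hand side of~(\ref{quot}). The denominator makes sense: any special algebraic group $G$ is a successive extension of copies of $\mathbb{G}_m$ and $\mathbb{G}_a$, hence $\Upsilon([G])$ is a non-zero polynomial in $\mathbb{Z}[t]$ and therefore invertible in $\Lambda=\mathbb{Q}(t,s)$.

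The next step is to check that~(\ref{quot}) does not depend on the presentation $\xX \cong [X/G]$. Given two presentations $[X/G] \cong \xX \cong [X'/G']$ over $A$, form $Y\cneq X\times _{\xX}X'$. Then $Y\to X$ is a principal $G'$-bundle and $Y\to X'$ is a principal $G$-bundle. Since $G$ and $G'$ are special, both bundles are Zariski locally trivial, so applying Remark~\ref{fib} to the composed morphism $\pi_Y\colon Y\to A$ we obtain
\begin{align*}
\Upsilon([G'])\cdot \Upsilon_A([(X,\pi_X)]) \;=\; \Upsilon_A([(Y,\pi_Y)]) \;=\; \Upsilon([G])\cdot \Upsilon_A([(X',\pi_{X'})]),
\end{align*}
and dividing by $\Upsilon([G])\Upsilon([G'])$ gives the required equality in $\Lambda$. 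One also has to verify that~(\ref{quot}) is compatible with the scissor relations of Definition~\ref{SFD}; this reduces to the relations in $K_0(\Var/A)$ because a closed substack $\yY \subset [X/G]$ is the quotient of a $G$-invariant closed subvariety $Y\subset X$, and additivity of $\Upsilon_A$ on $K_0(\Var/A)$ gives what we need after dividing by $\Upsilon([G])$.

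To extend $\Upsilon_A'$ to an arbitrary $\xX$ of finite type with affine geometric stabilizers, I would invoke the stratification theorem used in~\cite[Section~4]{Joy5}: every such stack admits a finite stratification by locally closed substacks of the form $[X_i/G_i]$ with $X_i$ quasi-projective and $G_i$ special. Setting
\begin{align*}
\Upsilon_A'([(\xX,\rho)]) \cneq \sum_i \Upsilon_A'\bigl([([X_i/G_i],\rho|_{[X_i/G_i]})]\bigr)
\end{align*}
defines the desired extension provided this sum is independent of the chosen stratification. This well-definedness is the main obstacle, and it is handled exactly as in~\cite[Theorem 4.10]{Joy5}: given two such stratifications, pass to a common refinement, and within each stratum use the additivity established in the previous paragraph together with~(\ref{quot}).

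The verification that our $\Upsilon_A$ satisfies every axiom on which Joyce's argument relies has already been collected: additivity comes from the definition of $K_0(\Var/A)$, compatibility with products and with Zariski locally trivial fibrations comes from Lemma~\ref{fol} and Remark~\ref{fib}, and non-vanishing of $\Upsilon([G])$ for special $G$ was checked above. Because Joyce's proof uses only these formal properties, no new input beyond what has been developed is needed, and the resulting $\Upsilon_A'\colon K_0(\St/A)\to \Lambda$ satisfies~(\ref{quot}) by construction and restricts to $\Upsilon_A$ on $K_0(\Var/A)\subset K_0(\St/A)$ (via $G=\{1\}$).
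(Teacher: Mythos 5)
Your proposal is correct and follows essentially the same route as the paper: define $\Upsilon_A'$ on quotient stacks by the formula (\ref{quot}), check independence of the presentation via the fiber product $X\times_{\xX}X'$ and the Zariski local triviality of principal bundles under special groups (i.e.\ Remark~\ref{fib} plus the argument of~\cite[Proposition 4.8]{Joy5}), and then extend by stratification as in~\cite[Theorem 4.10]{Joy5}, exactly as the paper does by citation. The only slip is your justification of $\Upsilon([G])\neq 0$: a special group need not be an iterated extension of $\mathbb{G}_a$'s and $\mathbb{G}_m$'s (e.g.\ $\SL_n$ is special but not solvable), but the non-vanishing itself is still true and is what the paper quotes from~\cite[Lemma 4.7]{Joy5}.
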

  \begin{proof}
  When $A=\Spec \mathbb{C}$, Lemma is proved in~\cite[Theorem 4.10]{Joy5}. 
  Also 
  $\Upsilon([G])$
  is non-zero in $\mathbb{Z}[t]$ 
   by~\cite[Lemma 4.7]{Joy5},
   hence the RHS of (\ref{quot}) makes sense. 
   We have to check that
  for $\rho \colon \xX \to A$ with $\xX$ 1-isomorphic to 
  $[X/G]$, the value $\Upsilon _{A}([(X,\pi)])/\Upsilon([G])$
  does not depend on a choice of $X$, $G$, 
  and an isomorphism $\xX \cong [X/G]$.  This follows from 
  Remark~\ref{fib} and exactly the same 
  proof of~\cite[Proposition 4.8]{Joy5}.  
  Finally as in the proof of~\cite[Theorem 4.10]{Joy5}, any 
  Artin stack with affine geometric stabilizers $\xX$ 
  is stratified by global quotient stacks, and can 
  define $\Upsilon _{A}'([(\xX, \rho)])$ 
  by the formula (\ref{quot}) and the linearity. 
  Then the same proof of~\cite[Theorem 4.10]{Joy5} shows that
  $\Upsilon _{A}'([(\xX, \rho)])$
  does not depend on a choice of such a stratification.
  \end{proof}
   We will use a $\Lambda$-module with more relations than $K_0(\St/\rR)$, 
   denoted by $K_0(\St/\rR)$. 
   It was denoted by $\SF(\rR, \Upsilon, \Lambda)$ in~\cite{Joy5}. 
   \begin{defi}\emph{\bf{\cite[Definition 4.11]{Joy5}}}\label{relate}
   \emph{We define 
   $K_0(\St/\rR)_{\Upsilon}$ 
   to be the $\mathbb{\Lambda}$-module
  generated by equivalence classes of 
  pairs $[(\xX, \rho)]$, where $\rho \colon \xX \to \rR$ is 
  a 1-morphism of Artin stacks,
   $\xX$ is of finite type over $\mathbb{C}$ 
  with affine geometric stabilizers, such that}
  
  \emph{(i) For each closed 
  substack $\yY \subset \xX$, one has 
  $$[(\xX, \rho)]=[(\yY, \rho |_{\yY})]+[(\xX \setminus \yY, \rho |_{\xX 
  \setminus \yY})].$$
   \quad (ii) Let $\xX$ be a finite type Artin $\mathbb{C}$-stack with 
  affine geometric stabilizers together with a 1-morphism 
  $\rho\colon \xX \to \rR$. Let
  $T$ be a quasi-projective variety, 
   and $\mathrm{pr} \colon T\times \xX \to \xX$ the projection. Then 
   $$[(T\times \xX, \rho \circ \mathrm{pr} )]=
   \Upsilon([T])[(\xX, \rho)].$$
  \quad (iii) Let $\rho \colon \xX \to \rR$ be as above and $\xX \cong [X/G]$
   with $X$ quasi-projective, $G$ a special algebraic group 
   acting on $X$. Then we have
   $$[(\xX, \rho)]=\Upsilon([G])^{-1}[(X, \pi)],$$
   where $\pi$ is the composition $\pi \colon 
   X \to [X/G] \cong \xX \stackrel{\rho}{\to}\rR$. 
   } 
   \end{defi}

   \begin{rmk}\label{gene}\emph{
   As in the proof of~\cite[Theorem 4.10]{Joy5},
   any Artin stack of finite type is stratified by 
   global quotient stacks. Then
    using (i), (iii) in Definition~\ref{relate}, 
   one can show that
    $\Lambda$-module $K_0(\St/\rR)$
   is spanned over $\Lambda$ 
   by $[(X, \pi)]$, where $X$ is a variety and 
   $\pi \colon X\to \rR$ is a 1-morphism. 
   }
   \end{rmk}
  
   Now we descend the map $\Upsilon _A '$ to 
   $K_0(\St/A)_{\Upsilon}$. 
   \begin{lem}\label{descend}
   There is a $\Lambda$-module 
   homomorphism $\bar{\Upsilon} _A \colon 
   K_0(\St/A)_{\Upsilon} \to \Lambda$
   such that the following diagram commutes, 
   $$\xymatrix{
   K_0(\St/A) \ar[r]^{\Upsilon_{A}'}\ar[d] & \Lambda \\
   K_0(\St/A)_{\Upsilon}
   \ar[ru]_{\bar{\Upsilon} _A}. & }$$
   Here the left arrow is the natural quotient map. 
   \end{lem}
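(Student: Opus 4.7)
The plan is to verify that the map $\Upsilon_A' \colon K_0(\St/A) \to \Lambda$ constructed in Lemma~\ref{general} respects, after $\mathbb{Q}$-linear extension of scalars to $\Lambda$, the two additional relations (ii) and (iii) imposed in Definition~\ref{relate}. Once this is shown, the quotient map $K_0(\St/A) \otimes_{\mathbb{Q}} \Lambda \twoheadrightarrow K_0(\St/A)_{\Upsilon}$ produces a unique $\Lambda$-linear factorization $\bar{\Upsilon}_A$ making the triangle commute.

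First I would observe that relation (iii) is tautological: by the defining formula (\ref{quot}) of Lemma~\ref{general}, whenever $\xX \cong [X/G]$ with $G$ special, one has
\[
\Upsilon_A'\bigl([(\xX, \rho)]\bigr) = \frac{\Upsilon_A\bigl([(X, \pi)]\bigr)}{\Upsilon([G])} = \Upsilon([G])^{-1}\cdot \Upsilon_A'\bigl([(X, \pi)]\bigr),
\]
where $\pi \colon X \to \xX \stackrel{\rho}{\to} A$. Thus the class $[(\xX, \rho)] - \Upsilon([G])^{-1}[(X, \pi)]$ imposed in Definition~\ref{relate}(iii) lies in the kernel.

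Next I would verify relation (ii). By the stratification argument of~\cite[Theorem 4.10]{Joy5} (as in the proof of Lemma~\ref{general}), any finite type Artin stack $\xX$ with affine geometric stabilizers can be stratified by global quotient stacks $[X_i/G_i]$; using relation (i) (which $\Upsilon_A'$ already satisfies since it descends from $K_0(\Var/A)$) and relation (iii) just established, it is enough to check (ii) when $\xX = [X/G]$. In this case $T\times \xX \cong [(T\times X)/G]$ with $G$ acting trivially on $T$, so
\[
\Upsilon_A'\bigl([(T\times \xX, \rho \circ \mathrm{pr})]\bigr) = \frac{\Upsilon_A\bigl([(T\times X, \pi \circ \mathrm{pr}_X)]\bigr)}{\Upsilon([G])}.
\]
By Lemma~\ref{fol} (equivalently Remark~\ref{fib} applied to the trivial fibration $T\times X \to X$), the numerator equals $\Upsilon([T])\cdot \Upsilon_A([(X, \pi)])$, hence the right-hand side equals $\Upsilon([T])\cdot \Upsilon_A'([(\xX, \rho)])$, as required.

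Having verified both (ii) and (iii), the $\mathbb{Q}$-linear map $\Upsilon_A'$ extends $\Lambda$-linearly to $K_0(\St/A)\otimes_{\mathbb{Q}} \Lambda$ and kills the $\Lambda$-submodule generated by (ii) and (iii), therefore descends to the desired $\Lambda$-module homomorphism $\bar{\Upsilon}_A$. The only delicate point is the passage from global quotient stacks to arbitrary Artin stacks in relation (ii), and this is handled by the same stratification technique used in Lemma~\ref{general}, which already guarantees independence from the chosen presentation.
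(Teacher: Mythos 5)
Your proposal is correct and follows essentially the same route as the paper: the paper's (terser) proof likewise checks compatibility with relation (ii) via Lemma~\ref{fol} together with the construction of $\Upsilon_A'$ in Lemma~\ref{general}, and notes that relation (iii) is immediate from the formula (\ref{quot}). Your extra detail (stratifying to reduce (ii) to the global quotient case $[X/G]$ and using $T\times[X/G]\cong[(T\times X)/G]$) is just an unpacking of what the paper leaves implicit in citing the construction of $\Upsilon_A'$.
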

   \begin{proof}
   For the relation (ii) of Definition~\ref{relate}, we have 
   $$\Upsilon _A '([(T\times \xX , \rho \circ \mathrm{pr})])=
   \Upsilon(T)\cdot\Upsilon _A '([\xX, \rho]),$$
   by Lemma~\ref{fol} and the construction of $\Upsilon _A'$
   in Lemma~\ref{general}. 
   The compatibility with Definition~\ref{relate} (iii) 
   follows from (\ref{quot}). 
   \end{proof}
   Let $p \colon \rR ' \to \rR$ be a 1-morphism 
   between Artin stacks of locally finite type. 
    Then there is the notion of its 
   push-forward, 
   \begin{align}\label{forward}
   p _{\ast}\colon K_0(\St/\rR') \lr K_0(\St/\rR),\end{align}
   by taking a 1-morphism $\rho \colon \xX \to \rR'$ 
   to $p \circ \rho \colon \xX \to \rR$. 
   Moreover if $p$ is of finite type, there is the
   notion of its pull-back, 
   \begin{align}\label{back}
   p ^{\ast}\colon K_0(\St/\rR) \lr K_0(\St/\rR'),\end{align}
   by taking a 1-morphism $\rho \colon \xX \to \rR$
   to the fiber product $\xX \times _{\rR}\rR ' \to \rR '$. 
   (cf.~\cite[Definition 3.4]{Joy5}.)
   These operations descend to $\Lambda$-module 
   homomorphisms between $K_0(\St/\rR)_{\Upsilon}$
   and $K_0(\St/\rR')_{\Upsilon}$.
   (cf.~\cite[Theorem 4.13]{Joy5}.)

   \subsection{Ringel-Hall product}
   Let $\mM$ be the moduli stack of $E\in \dD_X$ satisfying 
   (\ref{Ext}), which we discussed in Section~3. 
   Let $\aA \subset \dD _X$ be the heart of a bounded t-structure, 
   and take $v\in N_1(X)\oplus \mathbb{Z}$. 
   We consider the substacks 
   $$\mathfrak{Obj}(\aA)\subset \mM, \quad \mathfrak{Obj}^v(\aA)\subset 
   \mM$$
   the stack of objects in 
   $\aA$, the stack of objects in $\aA$ of numerical type $v$ respectively. 
    Suppose that $\mathfrak{Obj}(\aA)$ is an open substack of $\mM$, 
  hence in particular $\mathfrak{Obj}(\aA)$, $\mathfrak{Obj}^v(\aA)$ are
   Artin stacks of locally finite type. 
   (This condition holds when $\aA =\Coh _{\le 1}(X)$.)
   Then there is an associative product 
   $\ast$ on $K_0(\St/\mathfrak{Obj}(\aA))$
    and $K_0(\St/\mathfrak{Obj}(\aA))_{\Upsilon}$, 
    based on \textit{Ringel-Hall 
   algebras}~\cite{Joy2}. 
   Let $\mathfrak{Ex}(\aA)$ the stack of the exact sequences, 
   $0\to E_1\to E_2 \to E_3 \to 0$ in $\aA$. We have the 
   diagram, 
   \begin{align}\label{Hall}
   \xymatrix{
   \mathfrak{Ex}(\aA) \ar[r]^{p_{2}}
    \ar[d]_{(p_1, p_3)} & \mathfrak{Obj}(\aA) \\
   \mathfrak{Obj}(\aA)\times \mathfrak{Obj}(\aA),}\end{align}
   where $p_{i}$ takes $0\to E_1 \to E_2 \to E_3 \to 0$ to 
   $E_i$. For elements $f_i \in K_0(\St/\mathfrak{Obj}(\aA))$
    with $i=1,2$, one can define $f_1 \ast f_2$ as 
    \begin{align}
    \label{ast}f_1 \ast f_2 =p_{2\ast}(p_1 \times p_3)^{\ast}(f_1 \times f_2)
    \in K_0(\St/\mathfrak{Obj}(\aA)),\end{align}
    where $p_{2\ast}$, $(p_1 \times p_3)^{\ast}$ are defined in 
    (\ref{forward}), 
    (\ref{back}) respectively. 
    Note that if $f_i \in K_0(\St/\mathfrak{Obj}^{v_i}(\aA))$, the element
    $f_1 \ast f_2$ is contained in $K_0(\St/\mathfrak{Obj}^{v_1 +v_2}(\aA))$. 
    See~\cite[Section 5]{Joy2} for the detail. 
    \begin{thm}\emph{\bf{\cite[Theorem 5.2]{Joy2}}}
    The operation $\ast$ makes $K_0(\St/\mathfrak{Obj}(\aA))$ and 
    $K_0(\St/\mathfrak{Obj}(\aA))_{\Upsilon}$ associative 
   algebras with unit $[(\Spec \mathbb{C}, \rho _0)]$, 
   where $\rho _0 \colon \Spec \mathbb{C} \to \mathfrak{Obj}(\aA)$
   corresponds to the zero object. 
    \end{thm}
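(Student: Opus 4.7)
The plan is to follow Joyce's strategy from~\cite{Joy2} in our triangulated setting, verifying well-definedness, associativity and the unit axiom, and then checking compatibility with the extra relations in Definition~\ref{relate}. First I would verify that the diagram (\ref{Hall}) consists of the right kind of morphisms for the operations in (\ref{forward}), (\ref{back}) to apply: since $\mM$ is an Artin stack of locally finite type by Lieblich's theorem~\cite{LIE} and $\mathfrak{Obj}(\aA)\subset \mM$ is open, the stack $\mathfrak{Ex}(\aA)$ of short exact sequences in $\aA$ is also an Artin stack of locally finite type, the projection $(p_1,p_3)$ is of finite type (the fibre over $(E_1,E_3)$ is the stack-theoretic quotient $[\Ext^1(E_3,E_1)/\Hom(E_3,E_1)]$), and $p_2$ is representable of finite type (the fibre over $E_2$ parametrises subobjects of $E_2$ with prescribed numerical class, which is a quasi-projective scheme). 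Hence the formula (\ref{ast}) makes sense, and the numerical additivity observation that $f_1 \ast f_2 \in K_0(\St/\mathfrak{Obj}^{v_1+v_2}(\aA))$ when $f_i \in K_0(\St/\mathfrak{Obj}^{v_i}(\aA))$ follows from additivity of $(\ch_2,\ch_3)$ in short exact sequences.

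Next I would establish associativity by introducing the auxiliary stack $\mathfrak{Ex}^{(2)}(\aA)$ of two-step filtrations $0\subset F_1 \subset F_2 \subset F_3$ in $\aA$, with natural projections $q_i \colon \mathfrak{Ex}^{(2)}(\aA) \to \mathfrak{Obj}(\aA)$ sending such a filtration to $\gr_i=F_i/F_{i-1}$, together with a projection $q_{\mathrm{top}}$ sending it to $F_3$. The two iterated products $(f_1 \ast f_2)\ast f_3$ and $f_1 \ast (f_2\ast f_3)$ can both be written as $q_{\mathrm{top}\ast}(q_1\times q_2\times q_3)^{\ast}(f_1\times f_2 \times f_3)$ by repeatedly applying base change in cartesian squares built from (\ref{Hall}) and from the evident maps $\mathfrak{Ex}^{(2)}(\aA) \to \mathfrak{Ex}(\aA)$ that forget either the sub-step $F_1\subset F_2$ or the quotient step $F_2\subset F_3$. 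Associativity then reduces to the standard base-change and functoriality properties of push-forward and pull-back developed in~\cite[Section~3]{Joy5}.

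For the unit, I would observe that the zero object is a separated $\mathbb{C}$-point $\rho_0\colon \Spec\mathbb{C} \hookrightarrow \mathfrak{Obj}(\aA)$, and that restricting the diagram (\ref{Hall}) to sequences of the form $0\to 0 \to E \to E \to 0$ identifies the corresponding substack of $\mathfrak{Ex}(\aA)$ with $\mathfrak{Obj}(\aA)$ itself, with $p_2$ becoming the identity and $(p_1,p_3)$ becoming the pair $(\rho_0,\id)$; the analogous statement holds for $0 \to E \to E \to 0 \to 0$. From this it follows formally that $[(\Spec\mathbb{C},\rho_0)]\ast f = f = f\ast [(\Spec\mathbb{C},\rho_0)]$ for every generator $f$, hence for all of $K_0(\St/\mathfrak{Obj}(\aA))$ by linearity.

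Finally, to descend to $K_0(\St/\mathfrak{Obj}(\aA))_{\Upsilon}$ I would verify that the product $\ast$ respects the three relations of Definition~\ref{relate}: additivity in closed-open decompositions follows because $(p_1\times p_3)^{\ast}$ and $p_{2\ast}$ preserve stratifications; the multiplicative relation (ii) follows because product with a trivial $T$-family commutes with both $p_{2\ast}$ and $(p_1\times p_3)^{\ast}$; and the quotient relation (iii) follows from the projection formula, since special groups act freely up to stabiliser in the Hall product construction. The main obstacle I foresee is the technical verification that all the cartesian squares appearing in the associativity argument satisfy the base-change hypothesis used in~\cite{Joy5} in the relative setting over $A$; once that is granted, the proof is a straightforward transcription of~\cite[Theorem 5.2]{Joy2} with the relative group $K_0(\St/-)$ replacing Joyce's absolute $\SF(-)$.
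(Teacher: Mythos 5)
The paper does not actually prove this statement---it quotes it from Joyce's \cite[Theorem 5.2]{Joy2} and refers to \cite[Section 5]{Joy2} for details---and your outline is a faithful reconstruction of that argument in the present setting: the finite-type verification for $(p_1\times p_3)$ (with fibre $[\Ext^1(E_3,E_1)/\Hom(E_3,E_1)]$), associativity via the stack of two-step filtrations together with base change for the operations of \cite[Section 3]{Joy5}, the unit computation at the zero object, and descent to $K_0(\St/\mathfrak{Obj}(\aA))_{\Upsilon}$ via compatibility of push-forward and pull-back with the relations of Definition~\ref{relate} (cf.~\cite[Theorem 4.13]{Joy5}). The only inessential point is your claim that $p_2$ is representable of finite type with quasi-projective fibres: the push-forward $p_{2\ast}$ of (\ref{forward}) needs no finiteness hypothesis (only the pull-back along $(p_1\times p_3)$ does), so nothing has to be checked there, which is just as well since that claim would require extra care for a general heart $\aA$.
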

    In order to simplify the expositions in 
   the following sections, 
  we introduce 
    \textit{Hall algebras of derived categories} 
   $(\hH (X), \ast)$, introduced by To$\ddot{\textrm{e}}$n~\cite{Toen}.
   We emphasize that in our proof, we will not use the 
   algebra $(\hH(X), \ast)$ essentially. 
   The algebra $(\hH(X), \ast)$ 
   contains $(K_0(\St/\mathfrak{Obj}(\aA)), \ast)$ as a subalgebra, 
   and all the computations in the proof will be made in the 
   latter algebra for suitable hearts of bounded t-structures 
   $\aA \subset \dD_X$. 
     As we do not need its actual definition, 
   we only give its rough explanation and properties. 
   Let
  $\widetilde{\dD}$ be a dg-category 
  of finite type (cf.~\cite[Definition 2.4]{Toen2}) 
   whose 
  homotopy category is $D(X)$. 
  Then To$\ddot{\textrm{e}}$n and Vaqui{\'e}~\cite[Theorem 0.1]{Toen2} 
  showed that the 
  stack of objects in $\widetilde{\dD}$, which they
  denote $\underline{Perf}(X)$ in~\cite[Definition 3.28]{Toen2},  
  is an $\infty$-stack of 
  locally geometric and locally of finite presentation.
  Then $\hH (X)$ is defined by
   the $\infty$-stack 
   version of our notion of stack functions over 
   $\underline{Perf}(X)$,
   (in~\cite[Paragraph 3.3]{Toen},
    it is denoted by $\hH_{abs}(\widetilde{\dD})$,)
   and the $\ast$-product is defined in the similar way of (\ref{ast}). 
   Furthermore by~\cite[Corollary 3.21]{Toen2},
   the stack $\mM$ is realized as an open substack 
   of $\underline{Perf}(X)$.  
  Thus if there is a 1-morphism,
   $$\rho \colon \xX \lr \mM,$$
   with $\xX$ an Artin stack of finite type, 
   it defines an element $[(\xX, \rho)]\in \hH (X)$. 
   In particular for the heart of a bounded t-structure 
   $\aA \subset \dD_X$ with $\mathfrak{Obj}(\aA)\subset \mM$
   open, the algebra $(K_0(\St/\mathfrak{Obj}(\aA)),\ast)$ 
   is realized as a subalgebra of $\hH(X)$. 
  (See~\cite{Toen}, \cite[Paragraph 3.3]{Toen3} for the detail.)

   \subsection{Counting invariants of moduli stacks}\label{CouIn}
   Below we assume $X$ is a Calabi-Yau 3-fold, and use the 
   space of stability conditions $\Stab(X)$ 
   and the open subset $U_X$ 
   in Section~\ref{Stab}. 
    Let us take $v\in N_1(X)\oplus \mathbb{Z}$,
    $\sigma \in \overline{U}_X$ and $\phi \in \mathbb{R}$. 
   We consider the substack 
   $$i\colon \mM ^{(v,\phi)}(\sigma)\hookrightarrow \mM,$$
   which is the moduli stack of $E\in \pP (\phi)$
  of numerical type $v$. 
  By Proposition~\ref{Artin}, $\mM^{(v, \phi)}(\sigma)$ is an Artin stack 
  of finite type. 
   Also for an interval $I\subset \mathbb{R}$, 
   let $C_{\sigma}(I) \subset N_1(X) \oplus \mathbb{Z}$ be the image 
   of the map, 
   $$(\ch _2, \ch _3) \colon \pP (I)\setminus \{0\}
     \lr N_1 (X) \oplus \mathbb{Z}.$$
  \begin{defi}
 \label{inv}\emph{
   For $\sigma \in \overline{U}_X$, we
    define $\delta ^{(v, \phi)}(\sigma)\in \hH(X)$
   to be 
   $$\delta^{(v,\phi)}(\sigma)=[(\mM ^{(v, \phi)}(\sigma),i)]
   \in \hH(X).$$
  Also
   define $\epsilon ^{(v, \phi)}(\sigma)\in 
   \hH(X)$ as follows, 
   \begin{align}\label{eps}\epsilon ^{(v, \phi)}(\sigma)
   =\sum _{v_1 + \cdots +v_n=v}\frac{(-1)^{n-1}}{n}
   \delta^{(v_1, \phi)}(\sigma)\ast \cdots \ast 
   \delta^{(v_n, \phi)}(\sigma),\end{align}
   where $v_i \in C_{\sigma}(\phi)$.} 
   \end{defi}  
  We have to check the following, whose proof will 
  be given in Section~\ref{tech}.   
 \begin{lem}\label{fin}
 The sum (\ref{eps}) is a finite sum.
 \end{lem}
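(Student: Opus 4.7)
The plan is to reduce finiteness of the sum in (\ref{eps}) to finiteness of the set
\[
C_\sigma(\phi)_{\le m}\cneq\{v'\in C_\sigma(\phi):|Z_\sigma(v')|\le m\},\qquad m\cneq|Z_\sigma(v)|.
\]
Each $v_i\in C_\sigma(\phi)$ is the numerical type of a nonzero $E_i\in\pP(\phi)$, so $Z_\sigma(v_i)=m_i\exp(i\pi\phi)$ with $m_i>0$; additivity of $Z_\sigma$ together with $v=\sum_i v_i$ gives $\sum_i m_i=m$, so each $v_i$ lies in $C_\sigma(\phi)_{\le m}$. Once that set is shown to be finite, its minimum positive norm $c$ bounds $n\le m/c$, and the set of ordered tuples of bounded length in a finite set is itself finite.

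For the finiteness of $C_\sigma(\phi)_{\le m}$ I would adapt the argument producing (\ref{once}) in the proof of Proposition~\ref{wallcham}. Writing $v'=(\beta',k')$, any $E'\in\pP(\phi)$ lies, up to an integer shift, in $\Coh_{\le 1}(X)$, so there is a sign $\varepsilon\in\{\pm 1\}$ depending only on $\phi$ such that $\varepsilon\beta'$ is the fundamental class of an effective one-cycle; hence $\varepsilon\beta'\in\overline{NE}(X)\cap N_1(X)_{\mathbb{Z}}$. The imaginary part of $Z_\sigma(v')$ equals $\omega\cdot\beta'$, which is bounded in absolute value by $m$. Because $\omega$ is ample and $\overline{NE}(X)$ is pointed, the slab $\{c\in\overline{NE}(X):\omega\cdot c\le m\}$ is compact, so its intersection with the discrete lattice $N_1(X)_{\mathbb{Z}}$ is finite, bounding the possible $\beta'$. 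Given $\beta'$, the phase condition together with integrality of $k'$ and the bound $|Z_\sigma(v')|\le m$ leaves only finitely many $k'$ (at most one when $\beta'\neq 0$), and $C_\sigma(\phi)_{\le m}$ is accordingly finite.

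The main technical hurdle will be justifying, for $\sigma\in\overline{U}_X\setminus U_X$, that the heart of $\sigma$ remains a shift of $\Coh_{\le 1}(X)$, so that effectivity (up to a fixed sign) of $\ch_2$ for semistable objects of phase $\phi$ continues to hold on the boundary. Inside $U_X$ this is immediate from Lemma~\ref{LV}; on the boundary I would obtain it by a limiting argument along a path in $U_X$ approaching $\sigma$, combined with the wall-and-chamber structure of Proposition~\ref{wallcham} to control the semistable classes. Granted this input, the compactness-plus-discreteness argument of the middle paragraph goes through without further modification.
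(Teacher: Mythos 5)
Your first reduction (masses add along a fixed ray, so every $v_i$ has $\lvert Z(v_i)\rvert\le \lvert Z(v)\rvert$, and finiteness of the set of such classes bounds both the parts and the length $n$) is fine, and for $\sigma\in U_X$ your compactness-plus-discreteness argument is essentially the same as the one the paper uses in Proposition~\ref{wallcham}. The genuine gap is in the step you yourself flag as the "main technical hurdle": the claim that for boundary points $\sigma\in\overline{U}_X\setminus U_X$ the heart is still a shift of $\Coh_{\le 1}(X)$, so that $\ch_2$ of a semistable object of phase $\phi$ is effective up to a sign depending only on $\phi$. This is false, and no limiting argument can rescue it: by Proposition~\ref{pt}~(iii), the boundary points for which the lemma is actually needed (they are the ones entering Definition~\ref{abo}) are the pairs (\ref{pair}) whose hearts are the perverse hearts $\pPPer(\dD_X)$, tilts of $\Coh_{\le 1}(X)$ and not shifts of it. Concretely, for $\sigma=(Z_{(-\delta H,f^{\ast}\omega')},\oPPer(\dD_X))$ both $S_i=\oO_{C_i}(-1)$ (with $\ch_2=[C_i]$ effective) and $S_0=\omega_{f^{-1}(0)}[1]$ (with $\ch_2=-[f^{-1}(0)]$ anti-effective) are objects of the heart with $Z\in\mathbb{R}_{<0}$, i.e.\ of the same phase $\phi=1$, so there is no single sign $\varepsilon$ attached to the phase, and your slab $\{c\in\overline{NE}(X):\omega\cdot c\le m\}$ does not contain the relevant classes. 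Since arbitrarily large cancelling combinations of $\pm[C_i]$ are not excluded by your bound on $\Imm Z$ alone (the curves $C_i$ are contracted, so $f^{\ast}\omega'\cdot C_i=0$), the finiteness of $C_\sigma(\phi)_{\le m}$ is exactly what still has to be proved in this case.

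The paper avoids assuming anything about the boundary heart: using $\sigma\in\overline{U}_X$ it picks a nearby interior point $\tau=(Z_{(B,\omega)},\Coh_{\le 1}(X))$ with $C_{\sigma}(\phi)\subset C_{\tau}((\phi-\varepsilon,\phi+\varepsilon))$ and decomposes each $v_i$ into classes of $\tau$-semistable objects, which do have $\pm$effective $\ch_2$. For $0<\phi<1$ one can take the interval inside $(0,1)$, so all pieces are effective and your compactness argument applies verbatim; for $\phi=1$ the pieces have mixed signs, and the extra input is the estimate $\lvert\beta_{ij}\cdot\omega\rvert\le\lvert\Ree Z_{(B,\omega)}(v)\rvert\cdot\tan\pi\varepsilon$ coming from the phases lying in $(1-\varepsilon,1+\varepsilon)$, which re-establishes the boundedness needed to apply compactness of (\ref{comp}). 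Your proposal is missing this mechanism for controlling classes of both signs at the boundary, which is the only nontrivial content of the lemma.
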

  \begin{rmk}\label{alg}\emph{
  Suppose that for $\phi \in \mathbb{R}$, 
  there is the heart of a t-structure $\aA \subset \dD _X$
  such that $\mathfrak{Obj}(\aA)\subset \mM$ is open and 
  $\pP (\phi)\subset \aA$. Then $\delta ^{(v, \phi)}(\sigma)$
  is contained in the subalgebra
  $K_0(\St/\mathfrak{Obj}(\aA))$ and
  $\epsilon ^{(v, \phi)}(\sigma)$ coincides with the one 
  defined in the algebra $K_0(\St/\mathfrak{Obj}(\aA))$
  as in~\cite[Definition 8.1]{Joy3}.  
  } \end{rmk}
  
  \begin{rmk}\label{ed}\emph{
  Suppose any $\sigma$-semistable object of numerical type $v$ 
  is stable. Then we must have $\epsilon^{(v,\phi)}(\sigma)=\delta^{(v,\phi)}(\sigma)$. }  
  \end{rmk}
  
  For $v=(\beta, k)\in N_1(X)\oplus \mathbb{Z}$, let
   us consider the following open substacks of $\mM$, 
   $$\mathfrak{Coh}(X) \cneq \mathfrak{Obj}(\Coh _{\le 1}(X)), \quad 
   \mathfrak{Coh}^v(X) \cneq \mathfrak{Obj}^v(\Coh_{\le 1}(X)).$$
   By Remark~\ref{alg}, for $0< \phi \le 1$ and $\sigma \in U_X$, we have 
   \begin{align}\label{sf}
   \epsilon ^{(v, \phi)}(\sigma) \in K_0(\St/\mathfrak{Coh}^v(X)).
   \end{align}
   
   Using the same argument as in~\cite[Chapter 5, Section 4]{Mum},
   we have the following 1-morphism, 
   $$\pi \colon \mathfrak{Coh}^{v}(X)\ni E \longmapsto s(E) \in
   \Chow _{\beta}(X). $$
   Now we define the element $P(v, \sigma) \in \Lambda$. 
   \begin{defi}\label{mode}\emph{
   For $v\in N_1(X) \oplus \mathbb{Z}$ and $\sigma \in U_X$
   as above, we
   define $P(v, \sigma)\in \Lambda$ as follows. }
   \begin{itemize}
   \item \emph{If $v\in C_{\sigma}(\phi)$ with 
   $0<\phi \le 1$,
   we define 
   $$P(v, \sigma)\cneq (\mathbb{L}-1)\Upsilon _A '
   (\pi_{\ast}\epsilon ^{(v,\phi)}(\sigma)),$$
   where 
   $\mathbb{L}=\Upsilon(\mathbb{A}^1)$ and
   $A=\Chow _{\beta}(X)$.
   Note that $\pi_{\ast}\epsilon^{(v,\phi)}(\sigma)\in K_0(\St/A)$ makes sense 
   by (\ref{sf}). }
   \item \emph{If $v\in C_{\sigma}(\phi)$ with 
   $1<\phi \le 2$, we define 
   $$P(v, \sigma)\cneq P(-v, \sigma).$$
   Note that in this case $-v \in C_{\sigma}(\phi -1)$, hence 
   the RHS makes sense.}
   \item \emph{Otherwise we 
   define $P(v, \sigma)=0$. }
   \end{itemize} 
   \begin{rmk}\label{observe}
   \emph{The definition of $P(v, \sigma)$ 
   for $v\in C_{\sigma}(\phi)$ with $1<\phi \le 2$ is motivated 
   by the following observation. Let $\mathfrak{Coh}^v(X)[1]$ be 
   the stack of objects $E\in \Coh_{\le 1}(X)[1]$ of 
   numerical type $v$. Then we have 
   $$\epsilon^{(v, \phi)}(\sigma) \in K_0(\St/\mathfrak{Coh}^v(X)[1]).$$
   Also we have the 1-morphism, 
   $$\pi' \colon \mathfrak{Coh}^{v}(X)[1] \ni E \longmapsto 
   -s(E) \in A'\cneq \Chow_{-\beta}(X).$$
   Hence it is reasonable to define 
   \begin{align}\label{rea}
   P(v, \sigma)=(\mathbb{L}-1)\Upsilon_{A'}'(\pi'_{\ast}\epsilon^{(v, \phi)}
   (\sigma)).\end{align}
   However we can easily see that the RHS of (\ref{rea}) is equal to 
   $P(-v, \sigma)$, hence we define $P(v, \sigma)$ as in Definition~\ref{mode}
   to reduce the exposition. 
   }   
   \end{rmk}
   \end{defi}
    We have the following proposition. (Recall that
    we have defined $\mathbb{Z}[t,s]^{\dag}$
    and $Q^{\flat}$ for $Q\in \mathbb{Z}[t,s]^{\dag}$
    in Definition~\ref{daggar}.)
    \begin{prop}\label{dag}
    If $\sigma =\sigma _{(0, \omega)}$
    with $\omega$ ample
    and $v=(\beta, 1)$, we have 
    $P(v, \sigma)\in \mathbb{Z}[t,s]^{\dag}$.    
    \end{prop}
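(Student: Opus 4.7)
The plan is to apply Joyce's theory of virtually indecomposable stack functions. I would first establish that $(\mathbb{L}-1)\epsilon^{(v,\phi)}(\sigma)$ lies in the image of the natural map
$$K_0(\Var/\mathfrak{Coh}^v(X)) \lr K_0(\St/\mathfrak{Coh}^v(X))_{\Upsilon}.$$
Once this is done, applying $\pi_{\ast}$ followed by $\Upsilon_A'$ produces an element of $\mathbb{Z}[t,s]$ that is a $\mathbb{Q}$-linear combination $\sum_i c_i \Upsilon_A([(X_i, \pi_i)])$ with each $X_i$ a quasi-projective variety. Each summand has even $m$ by Remark~\ref{high} (after stratification into smooth connected pieces), so $P(v,\sigma)$ itself has even $m$ and lies in $\mathbb{Z}[t,s]^{\dag}$.

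First I would observe that Remark~\ref{alg} applies here: since $\sigma=\sigma_{(0,\omega)}$ and the heart $\Coh_{\le 1}(X)=\pP((0,1])$ has $\mathfrak{Coh}(X)\subset\mM$ open, the element $\epsilon^{(v,\phi)}(\sigma)$ may be computed entirely inside the Hall subalgebra $K_0(\St/\mathfrak{Coh}(X))$, placing us exactly in the abelian-category setting of Joyce~\cite{Joy3, Joy4}. By Remark~\ref{Gs}, $\sigma$-semistability on $\Coh_{\le 1}(X)$ coincides with $\omega$-Gieseker semistability, so the standard Harder--Narasimhan and Jordan--H\"older theory is available, and the finiteness of the sum (\ref{eps}) is guaranteed by Lemma~\ref{fin}.

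The crucial step is the virtual indecomposability of $(\mathbb{L}-1)\epsilon^{(v,\phi)}(\sigma)$. This is the content of~\cite[Theorem 8.7]{Joy3}: the M\"obius-inversion coefficients $(-1)^{n-1}/n$ appearing in (\ref{eps}) combine with the stabilizer factors $\prod_i \Upsilon([\GL_{m_i}])^{-1}$, contributed by polystable objects of the shape $E\cong\bigoplus_i E_i^{\oplus m_i}$, to yield an element divisible by $(\mathbb{L}-1)$ whose quotient lies in the image of $K_0(\Var/-)$. I would import Joyce's combinatorial argument verbatim; its only inputs beyond abelian-category axioms are the existence of Harder--Narasimhan/Jordan--H\"older filtrations and the finiteness of the sum (\ref{eps}), both of which hold in our setting.

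Once virtual indecomposability is in hand, the remaining steps are formal. Lemmas~\ref{general} and~\ref{descend} send each variety class $[(X_i,\pi_i)]$ to $\Upsilon_A([(X_i,\pi_i)])\in\mathbb{Z}[t,s]$; stratifying each $X_i$ into smooth locally closed strata of complex dimension $d_i$, Remark~\ref{high} gives $m=2d_i$ for each stratum, so the maximum $m$ over the finite combination is even. The main technical obstacle is verifying that Joyce's abelian-category arguments apply cleanly here, in particular that the passage between the Hall algebra $\hH(X)$ of the triangulated category $\dD_X$ and the abelian Hall subalgebra $K_0(\St/\mathfrak{Coh}(X))$ respects the virtual indecomposability conclusion; this follows from Remark~\ref{alg} and is the only non-routine verification.
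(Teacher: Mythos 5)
Your argument misses the key point of the hypothesis $v=(\beta,1)$, $B=0$, and because of that it cannot reach the stated conclusion. The paper's proof rests on the elementary observation that with $Z_{(0,\omega)}(E)=-\ch_3(E)+i\omega\cdot\ch_2(E)$ and $\ch_3(E)=1$, there are no strictly semistable objects: in a destabilizing sequence $0\to E_1\to E\to E_2\to 0$ of equal phase one would need $\Ree Z(E_i)<0$, i.e.\ $\ch_3(E_i)\ge 1$ for both $i$, contradicting $\ch_3(E_1)+\ch_3(E_2)=1$. Hence $\epsilon^{(v,\phi)}(\sigma)=\delta^{(v,\phi)}(\sigma)=[([M^v/\mathbb{G}_m],\rho)]$ with $M^v$ the projective moduli space of $\omega$-Gieseker stable sheaves and $\mathbb{G}_m$ acting trivially, so the factor $\mathbb{L}-1=\Upsilon(\mathbb{G}_m)$ cancels exactly and $P(v,\sigma)=\Upsilon_A([(M^v,\rho^{\dag})])$ is honestly the class of a single projective variety over $A$; Remark~\ref{high} then gives $P(v,\sigma)\in\mathbb{Z}[t,s]^{\dag}$. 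Your proposal never uses $\ch_3=1$ or $B=0$, which is a warning sign: the route you propose would ``prove'' the same statement for arbitrary $v$, where it is not expected to hold in this integral form.

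Concretely, the gap is that Joyce's virtual-indecomposability (no-pole) theorem only yields that $(\mathbb{L}-1)\epsilon^{(v,\phi)}(\sigma)$ is a $\mathbb{Q}$-linear combination $\sum_i c_i[(U_i,\rho_i)]$ of variety classes, so after applying $\pi_{\ast}$ and $\Upsilon_A'$ you get at best an element of $\mathbb{Q}[t,s]$; integrality of the coefficients, which is part of the definition of $\mathbb{Z}[t,s]^{\dag}$ and is needed later to read off the integers $\nu_l^{\alpha}(\beta)$ in Definition~\ref{mgd}, does not follow. Moreover your parity argument is invalid as stated: Remark~\ref{high} says $m$ is even for each individual variety class, but in a linear combination with rational and possibly negative coefficients the leading monomials can cancel, and the subleading monomials $t^{a}s^{b}$ of a variety class do not all satisfy $a+2b$ even (only the maximum does), so the maximum of the combination need not be even. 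Both defects disappear precisely when the combination collapses to a single variety class, which is what the absence of strictly semistable objects guarantees; that observation, not the heavy Joyce machinery, is the content of Proposition~\ref{dag}.
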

    \begin{proof}
    We may assume
    $v\in C_{\sigma}(\phi)$ for
     $0<\phi \le 1$. 
   If $\sigma =\sigma _{(0,\omega)}$
   and $v=(\beta, 1)$, any semistable object in $\sigma$ of 
   type $v$ is stable.
   In fact suppose there is a $\sigma$-semistable object 
   $E\in \Coh _{\le 1}(X)$ which is not stable. 
   Then there is an exact sequence  
   $0 \to E_1 \to E \to E_2 \to 0$ in $\Coh _{\le 1}(X)$
   such that
   \begin{align}\label{arg1}\arg Z_{(0,\omega)}(E)=\arg Z_{(0,\omega)}(E_1)
   =\arg Z_{(0,\omega)}(E_2).\end{align}
   Because $\ch _3(E)=1$, we have $\Ree Z_{(0, \omega)}(E)<0$, 
    hence (\ref{arg1}) implies 
    $\Ree Z_{(0, \omega)}(E_i)<0$
    for $i=1,2$.
    Since $-\ch _3(E_i)=\Ree Z_{(0, \omega)}(E_i)$, this
      contradicts to $1=\ch _3(E)=\ch _3(E_1)+\ch _3(E_2)$.
      
    By Remark~\ref{ed} we have  
   $$\epsilon^{(v,\phi)}(\sigma)=\delta^{(v,\phi)}(\sigma)=
   [([M^v /\mathbb{G}_m], \rho)],$$
   for some projective variety $M^v$, $\mathbb{G}_m$ acting on 
   $M^v$ trivially and 
   $\rho$ is a 1-morphism $[M^v /\mathbb{G}_m] \to \mathfrak{Coh}(X)$. 
    (The factor $\mathbb{G}_m$ comes from 
   the stabilizers $\Aut(E) \cong \mathbb{G}_m$ for stable objects
   $E$.)
   In fact $M^v$ is the moduli space of $\omega$-Gieseker stable 
   sheaves of Chern character $v$. 
    Thus by Remark~\ref{high}
    and noting $\mathbb{L}-1 =\Upsilon(\mathbb{G}_m)$, 
    we have $P(v,\sigma)\in \mathbb{Z}[t,s]^{\dag}$. 
    \end{proof}
    
     Now we define the notion of 
  motivic Gopakumar-Vafa invariant. 
  \begin{defi}\label{mgd}\emph{
  For $\sigma =\sigma _{(0, \omega)}\in U_X$ and $v=(\beta, 1)$, 
  write $P(v, \sigma)^{\flat}$ as 
  $$P(v, \sigma)^{\flat}=
  \sum _{\alpha, l}\nu_{l}^{\alpha}(\beta)t^{\alpha}s^{l-1}.$$
  By Proposition~\ref{dag}, it is possible to 
  define $P(v, \sigma)^{\flat}$. Then define the 
  \textit{motivic Gopakumar-Vafa invariant}
  $n_{g}^{\beta}(X)$ as follows, }
  \begin{align}\label{gvde}n_{g}^{\beta}(X)=
  \sum _{\alpha +l \ge 1}(-1)^{\alpha +g}l\nu _{l}^{\alpha}(\beta)
   \left\{ \left( \begin{array}{c}\alpha +l+g \\ 2g +1 \end{array} \right)
   -\left( \begin{array}{c}\alpha +l+g-2 \\ 2g +1 \end{array} \right)
   \right\}.\end{align}
   \end{defi}
   \begin{rmk}\emph{
   The motivation of (\ref{gvde}) in 
   Definition~\ref{mgd} comes from Proposition~\ref{alt}. 
  Obviously the invariant $n_g^{\beta}(X)$ coincides with 
  $\tilde{n}_g ^{\beta}$ if
  $\beta$ is represented by an effective one cycle, and
  the moduli space $M ^{\beta}$ in Paragraph~\ref{DGV} is smooth. }
  \end{rmk}
  \begin{rmk}\emph{
  Still the definition of $n_g^{\beta}$ does not involve 
  virtual classes, so $n_g^{\beta}$ is unlikely to be deformation 
  invariant. On the other hand, our construction of BPS-count
  has a possibility to involve 
  virtual classes using Behrend's constructible functions~\cite{Beh}. 
  As in the proof of Proposition~\ref{dag}, 
  $P(v, \sigma)$ in Definition~\ref{mgd}
  is equal to $\Upsilon_A([(M^v, \rho)])$
  for some projective variety over $A$, $\rho \colon M^v \to A$.
  Then using Behrend's constructible function $\nu\colon 
  M^v \to \mathbb{Z}$, 
  one might try to define $P'(v, \sigma)$ something like 
  $$P'(v, \sigma)=\sum_{n}\pm 
  n\Upsilon_A([(\nu^{-1}(n), \rho|_{\nu^{-1}(n)}]),$$
  and construct $n_g^{\beta '}$ by the same way as in Definition~\ref{mgd}. 
  In this paper we stick to Definition~\ref{mgd} in order to show 
  birational invariance, though 
  it seems interesting to pursue this construction.  }
    \end{rmk}
  
  In the next paragraph, we will show that $n_g ^{\beta}(X)$
  does not depend on a choice of $\omega$. 
    
    \subsection{Local transformation formula of the counting invariants}
    The aim of this paragraph is to give the transformation formula 
    of $\epsilon^{(v, \phi)}(\sigma)$ under 
    small deformations of $\sigma$. 
    Again we assume $X$ is a Calabi-Yau 3-fold. 
    Let us fix the following data, 
    $$v\in N_1(X)\oplus \mathbb{Z}, \quad \phi \in \mathbb{R}, \quad
    \sigma =(Z, \pP) \in \overline{U}_X.$$
    Furthermore we fix an open neighborhood $\sigma \in \uU$
    in $\Stab (X)$ such that $\overline{\uU}$ is compact. 
    We set $\sS \subset \dD _X$ to be the set of objects, 
   $$\sS \cneq \{ E\in \dD _{X} \mid 
   E \mbox{ is semistable in some }
   \sigma '=(Z', \pP')\in \overline{\uU} \mbox{ with }
   \lvert Z'(E) \rvert \le \lvert Z'(v) \rvert \}.$$
   Then $\sS$ has bounded mass, 
   hence there is a wall and chamber structure on 
   $\overline{\uU}$ as in Proposition~\ref{wallcham}. 
     Let $\cC \subset \overline{\uU}$ be a chamber with 
   $\sigma \in \overline{\cC}$. 
   By the definition of the topology on $\Stab (X)$
   (cf.~\cite[Section 6]{Brs1}), 
   we can take $\tau =(W, \qQ) \in U_X \cap \cC$ and $0 <\varepsilon <1/6$ 
   such that 
   $$\pP (\phi) \subset \qQ((\phi -\varepsilon, \phi +\varepsilon)), \quad
   \qQ (\phi) \subset \pP((\phi -\varepsilon, \phi +\varepsilon)),$$
   for any $\phi \in \mathbb{R}$.
   Furthermore we can take $W$ to be defined over $\mathbb{Q}$. 
   Note that for any $v\in C_{\tau}((\phi -\varepsilon, \phi +\varepsilon))$, 
   there are uniquely determined $\phi (v), \phi '(v)
   \in (\phi -\frac{1}{2}, \phi +\frac{1}{2})$
   such that 
   \begin{align}\label{half}W(v) \in \mathbb{R}_{>0}\exp(\pi i \phi(v)),\quad 
   Z(v) \in \mathbb{R}_{>0}\exp(\pi i \phi'(v)),
   \end{align}
   by our choice of $\varepsilon$. Recall the definition 
   of
   $\mathbb{H}\subset
   \mathbb{C}$ in (\ref{up}). 
   \begin{prop}\label{trans}
   Let $\sigma$, $\tau$ and $\varepsilon >0$ be as above. 
   There is a unique sequence of 
   functions $u_n \colon \mathbb{H}^{2n} \to \mathbb{Q}$
   such that we have the following in $\hH(X)$, 
   \begin{align}\label{formula}\epsilon ^{(v, \phi)}(\sigma) &=
   \sum _{v_1 +\cdots +v_n=v}u_n(z_1, \cdots, z_n, w_1, 
   \cdots, w_n)\epsilon^{(v_1, \phi _1)}(\tau)
   \ast \cdots \ast \epsilon ^{(v_n, \phi _n)}(\tau), \\ \label{formula2}
   &=\epsilon ^{(v, \phi)}(\tau)+[\emph{\mbox{ multiple commutators of }}
   \epsilon ^{(v_i, \phi _i)}(\tau)],
    \end{align}
   where $v_i \in C_{\tau}((\phi -\varepsilon, \phi +\varepsilon))$,
   $\phi _i=\phi (v_i)$ and 
    $$z_i=\exp(-\pi i(\phi -1/2))Z(v_i), \quad
    w_i =\exp(-\pi i(\phi -1/2))W(v_i).$$
    Here (\ref{formula}) is a finite sum, and $[\cdots]$ in (\ref{formula2})
    is a finite $\mathbb{Q}$-linear combination of multiple commutators 
    of $\epsilon ^{(v_i, \phi _i)}(\tau)$. 
   \end{prop}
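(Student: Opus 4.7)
The plan is to adapt Joyce's transformation formula~\cite[Theorem 5.2 and Theorem 5.4]{Joy4} from stability conditions on abelian categories to our triangulated-category setting. The underlying principle is that once $\tau$ is close enough to $\sigma$, the Harder-Narasimhan filtration with respect to $\tau$ of any $\sigma$-semistable object fits inside a single fixed abelian heart, which reduces the problem to Joyce's abelian-category framework, in line with Remark~\ref{alg}.

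First I would identify a heart $\aA \subset \dD_X$ such that $\mathfrak{Obj}(\aA)\subset \mM$ is open and $\aA$ contains both $\pP(\phi)$ and the $\qQ$-HN factors of every object of $\pP(\phi)$ of numerical type $v$. Since $\varepsilon<1/6$, the category $\aA=\qQ((\phi-1/2,\phi+1/2])$ (which is the heart associated with $\tau$ after rotating to phase $\phi$) does the job, and Proposition~\ref{Artin} together with Lemma~\ref{fin} guarantees the required finite-type and boundedness properties. Working inside the subalgebra $K_0(\St/\mathfrak{Obj}(\aA))\subset \hH(X)$, I would then stratify $\mM^{(v,\phi)}(\sigma)$ by the numerical types of $\qQ$-HN factors to obtain the identity
\begin{align*}
\delta^{(v,\phi)}(\sigma)= \sum_{v_1+\cdots+v_n=v}\delta^{(v_1,\phi_1)}(\tau)\ast\cdots\ast\delta^{(v_n,\phi_n)}(\tau),
\end{align*}
where $v_i\in C_{\tau}((\phi-\varepsilon,\phi+\varepsilon))$, $\phi_i=\phi(v_i)$ and $\phi_1>\cdots>\phi_n$. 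Finiteness of this sum follows from the wall-and-chamber structure (Proposition~\ref{wallcham}), whose proof exactly bounds the pairs $(\ch_2,\ch_3)$ of semistable factors in the bounded-mass set $\sS$. Substituting Joyce's Möbius-type inversion relating $\delta$'s and $\epsilon$'s on both sides of the above formula yields expression~(\ref{formula}), and the coefficients $u_n$ become explicit rational functions of the signs of the phase comparisons determined by $Z(v_i),W(v_i)$; rescaling by $\exp(-\pi i(\phi-1/2))$ as in the statement records this dependence as arguments $z_i,w_i\in\mathbb{H}$.

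The main obstacle is the passage from the bare identity~(\ref{formula}) to the multiple-commutator form~(\ref{formula2}). I would follow Joyce's combinatorial argument~\cite[Theorem 5.4]{Joy4}: the functions $u_n$ satisfy an alternating/symmetry identity which, combined with the Jacobi identity for the Lie bracket on $\hH(X)$, forces the sum minus the leading term $\epsilon^{(v,\phi)}(\tau)$ to collapse into a $\mathbb{Q}$-linear combination of iterated commutators. The uniqueness of $\{u_n\}$ is then a linear-independence statement for the $\epsilon^{(w,\psi)}(\tau)$ across distinct phases in $\qQ$, obtained by evaluating on $\tau$-semistable objects whose numerical type uniquely determines the phase. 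The only genuinely new ingredient beyond Joyce's abelian-category treatment is the verification that the relevant boundedness and openness hypotheses transfer from $\dD_X$ to the chosen heart $\aA$, which is precisely what Proposition~\ref{Artin}, Lemma~\ref{bounded} and Lemma~\ref{fin} supply; modulo this input the entire computation lives in $K_0(\St/\mathfrak{Obj}(\aA))_{\Upsilon}$ and matches Joyce's formulas term by term.
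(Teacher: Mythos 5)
Your proposal follows essentially the same route as the paper: reduce everything to a single heart of a bounded t-structure associated with $\tau$ that contains all the relevant semistable objects, work inside the Hall subalgebra $K_0(\St/\mathfrak{Obj}(\aA))\subset\hH(X)$, and run Joyce's change-of-stability argument (HN stratification giving the $\delta$-identity, then the $\delta$--$\epsilon$ inversion) to obtain (\ref{formula}), with the commutator form (\ref{formula2}) and the universality of the coefficients $u_n$ imported from \cite{Joy4} (the paper does this by citing the adaptation already carried out in \cite{Tst3}). The one point where the paper is more careful than you are is the choice of heart: rather than asserting that $\aA=\qQ((\phi-1/2,\phi+1/2])$ works, it invokes \cite[Proposition 3.18]{Tst3} to find $\psi$ with $\psi-1<\phi-\varepsilon<\phi+\varepsilon<\psi$ for which $\mathfrak{Obj}(\qQ((\psi-1,\psi]))\subset\mM$ is actually \emph{open} --- a property not supplied by Proposition~\ref{Artin} or Lemma~\ref{fin}, so that verification needs this external input (or a direct proof) before your subalgebra argument can start.
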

   \begin{proof}
   This is an application of the arguments in~\cite[Theorem 5.2]{Joy4}
   to Bridgeland's stability conditions, and 
   the proof is same as in~\cite[Equation (68)]{Tst3}.
   Note that (\ref{half}) implies $z_i, w_i \in \mathbb{H}$, thus 
   (\ref{formula}) makes sense. 
   By~\cite[Proposition 3.18]{Tst3}, there is $\psi \in \mathbb{R}$ such that
   $$\psi -1< \phi -\varepsilon <\phi +\varepsilon < \psi,$$
   and $\mathfrak{Obj}(\aA _{\psi})\subset \mM$ is open
    for $\aA _{\psi}=\qQ ((\psi -1, \psi])$.
    Since all the terms in (\ref{formula}) are contained in 
    $K_0(\St/\mathfrak{Obj}(\aA_{\psi}))$, it is enough 
    to show (\ref{formula}) in $K_0(\St/\mathfrak{Obj}(\aA_{\psi}))$. 
    Then the straightforward adaptation of the arguments 
    in~\cite[Proposition 5.23]{Tst3}
    which deduces~\cite[Equation (68)]{Tst3} gives the desired 
    equality. (Note that in \textit{loc.cite,} we worked over 
    an algebra $A(\aA _{\psi}, \Lambda, \chi)$, not over 
    $K_0(\St/\mathfrak{Obj}(\aA_{\psi}))$. However readers can find that
    the same argument is applied. Moreover in \textit{loc.cite,}
    the coefficients are given in the form 
    $U(\{v_i\}_{1\le i\le n}, \tau, \sigma)$. These are
    obviously rephrased in terms of the functions 
    $u_n \colon \mathbb{H}^{2n}\to \mathbb{Q}$, 
    as in~\cite[Paragraph 3.1]{Joy}.)   
    Finally the formula (\ref{formula2}) follows from~\cite[Theorem 5.2]{Joy4}.
   \end{proof}
   
   \begin{rmk}\emph{
   Suppose that there is the heart of a t-structure 
   $\aA \subset \dD _X$ with $\mathfrak{Obj}(\aA)\subset \mM$
   open, and all the terms $\epsilon^{(v,\phi)}(\sigma)$,
    $\epsilon^{(v_i,\phi_i)}(\tau)$
   in (\ref{formula}) are contained in $K_0(\St/\mathfrak{Obj}(\aA))$. 
   Then (\ref{formula}), (\ref{formula2}) hold in the algebra
   $K_0(\St/\mathfrak{Obj}(\aA))$. 
   }\end{rmk}
   
   The explicit formula of $u_n$ (cf.~\cite[Definition 4.4]{Joy4})
    is complicated and we do not 
   need this. 
    Now we show the following proposition. 
    \begin{prop}\label{choice}
    For any $v=(\beta,k)\in N_1(X) \oplus \mathbb{Z}$, the
     element $P(v,\sigma)\in \Lambda$ does not depend on 
    a choice of $\sigma \in U_X$. 
    \end{prop}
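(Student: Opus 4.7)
The plan is to prove that $P(v,\sigma)$ is locally constant on $U_X$ and then conclude from the connectedness of $U_X$ established in Lemma~\ref{op}. First I would fix $\sigma\in U_X$, choose a relatively compact open neighborhood $\uU\ni\sigma$ inside $\Stab^{\ast}(X)$, and consider the set $\sS\subset\dD_X$ of objects that are $\sigma'$-semistable with $|Z'(E)|\le |Z'(v)|$ for some $\sigma'=(Z',\pP')\in\overline{\uU}$. This set has bounded mass, so Proposition~\ref{wallcham} supplies a finite wall-chamber decomposition of $\overline{\uU}$. Within a single chamber $\cC$, the substacks $\mM^{(v,\phi)}(\sigma')$ and hence the elements $\epsilon^{(v,\phi)}(\sigma')$ are literally constant in $\sigma'$, so $P(v,\cdot)$ is constant there.

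The wall-crossing step is where the transformation formula of Proposition~\ref{trans} enters. I would fix $\sigma$ on a wall and take $\tau\in\cC\cap U_X$ nearby, then apply that proposition to write $\epsilon^{(v,\phi)}(\sigma)$ as $\epsilon^{(v,\phi)}(\tau)$ plus a finite $\mathbb{Q}$-linear combination of multiple commutators of terms $\epsilon^{(v_i,\phi_i)}(\tau)$. Applying $(\mathbb{L}-1)\cdot\bar{\Upsilon}_A\circ\pi_{\ast}$ to both sides, the leading pair of terms produces $P(v,\sigma)-P(v,\tau)$, so local constancy is reduced to showing that every multiple commutator appearing on the right has vanishing image under this composition. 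The sign convention in Definition~\ref{mode} together with Remark~\ref{observe} reduces the case $\phi\in(1,2]$ to $\phi\in(0,1]$, in which $\epsilon^{(v,\phi)}(\sigma)\in K_0(\St/\mathfrak{Coh}^v(X))$ and $\pi_{\ast}$ is literally defined.

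The main obstacle, and the heart of the argument, is this vanishing of commutator contributions. I expect the following mechanism: the Chow projection satisfies $\pi\circ p_2=\mathrm{add}\circ(\pi\times\pi)\circ(p_1,p_3)$ on $\mathfrak{Ex}(\Coh_{\le 1}(X))$, so after push-forward each of $\pi_{\ast}(f_1\ast f_2)$ and $\pi_{\ast}(f_2\ast f_1)$ is obtained by pushing $\pi_{\ast}f_1\boxtimes\pi_{\ast}f_2$ through the commutative addition map $\Chow_{\beta_1}\times\Chow_{\beta_2}\to\Chow_{\beta_1+\beta_2}$, each weighted by the motivic class of the appropriate extension stack. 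The Calabi-Yau 3-fold condition together with $\dim\Supp(E_i)\le 1$ forces the Euler pairing $\chi(v_1,v_2)$ to vanish, and Serre duality then identifies $\dim\Ext^1(E_1,E_3)-\dim\Hom(E_1,E_3)$ with $\dim\Ext^1(E_3,E_1)-\dim\Hom(E_3,E_1)$; once the stacky quotient by automorphisms is incorporated, which is responsible for the $(\mathbb{L}-1)$ factor in $P(v,\sigma)$, the two motivic weights agree and the commutator maps to zero in $\Lambda$. A straightforward induction on commutator length handles the multiple commutators, and combined with the chamber step and Lemma~\ref{op} this gives $P(v,\sigma)$ independent of $\sigma\in U_X$.
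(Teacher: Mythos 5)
Your proposal is correct and follows essentially the same route as the paper: reduce to comparing $\sigma$ with a nearby $\tau\in U_X$ in a chamber via Proposition~\ref{trans}, then kill the commutator terms after applying $(\mathbb{L}-1)\Upsilon_A'\circ\pi_{\ast}$, using the additivity $s(E)=s(E_1\oplus E_2)$ of the cycle map together with the Calabi--Yau/Serre-duality identity $\dim\Ext^1(E,F)-\dim\Hom(E,F)=\dim\Ext^1(F,E)-\dim\Hom(F,E)$ — this is exactly the content of Lemma~\ref{gensitu} and Sublemma~\ref{cy} in the paper. The only cosmetic differences are that the paper treats the phase $\phi=1$ case directly (where $\pP(1)=\qQ(1)$ consists of zero-dimensional sheaves, so no wall-crossing is needed) and disposes of the case $v\notin C_{\sigma}(\phi)$ by the closedness of semistability on chambers, both of which your framework absorbs with no essential change.
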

    \begin{proof}
    For $\sigma \in U_X$, let us take
    $\tau \in U_X$ and $\varepsilon >0$ 
    as in Proposition~\ref{trans}. 
    It is enough to show $P(v,\sigma)=P(v, \tau)$
    in this situation. First assume 
    $v\notin C_{\sigma}(\phi)$ for any $\phi \in \mathbb{R}$, 
    thus $P(v, \sigma)=0$. If $P(v, \tau)\neq 0$, 
    there is some $\tau$-semistable object 
    $E\in \dD _X$ of numerical type $v$. 
    Because $\tau$ is contained in a chamber, 
    $E$ must be also semistable in $\sigma$,
     which is 
    a contradiction.
     (See the comment in~\cite{Brs1} after~\cite[Proposition 8.1]{Brs1}.)
     Hence $P(v, \tau)=0$ follows. 
    
    Next suppose $v\in C_{\sigma}(\phi)$ for some $\phi$. 
    We may assume $0<\phi \le 1$. If $\phi =1$, then $\beta =0$ and 
    $v\in C_{\tau}(1)$. Since we have 
    $$\pP(1)=\qQ(1)=\{\mbox{zero dimensional sheaves}\},$$
    it follows that $\delta ^{(v_i,1)}(\sigma)=\delta ^{(v_i,1)}(\tau)$
    for any $v_i \in C_{\sigma}(1)=C_{\tau}(1)$.  
    Hence $\epsilon ^{(v,1)}(\sigma)=\epsilon ^{(v,1)}(\tau)$
    and $P(v, \sigma)=P(v, \tau)$ follows. 
    
    Finally suppose $0<\phi <1$. We can take $\varepsilon>0$ 
   sufficiently small such that
    $0<\phi -\varepsilon <\phi +\varepsilon <1$. Then all the terms 
    $\epsilon ^{(v_i, \phi _i)}(\tau)$ in (\ref{formula})
    are contained in $K_0(\St/\mathfrak{Coh}(X))$, and $(\ref{formula2})$
    holds in $K_0(\St/\mathfrak{Coh}(X))$. 
    Then applying
    Lemma~\ref{descend} and Remark~\ref{gene}, 
    it is enough to show the following:
    for two varieties $U_1$, $U_2$ with 
   1 morphisms $\rho _i \colon U_i \to 
   \mathfrak{Coh} ^{v_i}(X)$, 
   where 
   $v_i \in C_{\tau}((\phi-\varepsilon, \phi +\varepsilon))$
   with $v_1 +v_2 =v$,
    we have 
   \begin{align}\label{ups}
   \Upsilon _A '(\pi _{\ast}[f_1, f_2])=0, \quad 
   f_i=[(U_i, \rho _i)] \in K_0(\St/\mathfrak{Coh}(X)),\end{align}
   where $A=\Chow _{\beta}(X)$. 
   Note that if there is an exact sequence $0\to E_1 \to E \to E_2 \to 0$
   in $\Coh _{\le 1}(X)$, we have 
   $s(E)=s(E_1 \oplus E_2)$. Using this and 
   Lemma~\ref{gensitu} below, we can conclude (\ref{ups}) holds. 
   \end{proof}
   
   \begin{lem}\label{gensitu}
   Let $\aA \subset \dD _X$ be the heart of a t-structure with 
   $\mathfrak{Obj}(\aA) \subset \mM$ open.
   For $v\in N_1(X)\oplus \mathbb{Z}$, 
    assume that 
   there is a 1-morphism $\pi\colon \mathfrak{Obj}^v(\aA) \to A$, 
   where $A$ is a projective variety, which satisfies,
   \begin{align}\label{dirsum}
   \pi([E])=\pi([E_1 \oplus E_2]), \quad \mbox{for any exact 
   sequence } 0 \to E_1 \to E \to E_2 \to 0 \mbox{ in }\aA,\end{align}
   where $E \in \aA$ is of numerical type $v$. 
   Then 
   we have 
   \begin{align}\label{ups2}\Upsilon _A '(\pi _{\ast}[f_1, f_2])=0, \quad 
   f_i=[(U_i, \rho _i)] \in K_0(\St/\mathfrak{Obj}(\aA)), \end{align}
   where $U_1$, $U_2$ are quasi-projective varieties. 
   \end{lem}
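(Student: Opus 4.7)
The plan is to compute the Ringel-Hall commutator $[f_1, f_2] = f_1 \ast f_2 - f_2 \ast f_1$ explicitly, use hypothesis (\ref{dirsum}) to factor everything through a single morphism $U_1\times U_2\to A$, and then invoke Calabi-Yau threefold Serre duality to identify the two resulting motivic classes. By (\ref{ast}) we may write $f_1\ast f_2 = [(V_{12}, \sigma_{12})]$, where $V_{12}$ parametrizes triples $(u_1, u_2, 0\to \rho_1(u_1)\to E\to \rho_2(u_2)\to 0)$ and $\sigma_{12}$ sends this datum to the middle term $E$; analogously $f_2\ast f_1 = [(V_{21}, \sigma_{21})]$ comes from exact sequences $0\to \rho_2(u_2)\to E\to \rho_1(u_1)\to 0$. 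By (\ref{dirsum}) both $\pi\circ \sigma_{12}$ and $\pi\circ \sigma_{21}$ factor through the single morphism
$$\mu\colon U_1\times U_2 \lr A, \qquad (u_1, u_2)\longmapsto \pi(\rho_1(u_1)\oplus \rho_2(u_2)).$$
Hence it suffices to prove $[V_{12}] = [V_{21}]$ in $K_0(\St/U_1\times U_2)_{\Upsilon}$, then push forward along $\mu$ and apply Lemma~\ref{descend}.

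To establish this equality I would stratify $U_1\times U_2$ into locally closed subvarieties $\{S_k\}$ on which the dimensions of $\Ext^i(\rho_1(u_1), \rho_2(u_2))$ and $\Ext^i(\rho_2(u_2), \rho_1(u_1))$ are constant for $i=0,1$; upper semi-continuity makes this stratification finite. On each $S_k$ the Ext groups globalize to vector bundles, and $V_{12}|_{S_k}$ admits a presentation as the quotient of an $\Ext^1(\rho_2(u_2), \rho_1(u_1))$-bundle by the additive action of a $\Hom(\rho_2(u_2), \rho_1(u_1))$-group scheme, following the Ringel-Hall bookkeeping of~\cite[Section 5]{Joy2}. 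In $K_0(\St/S_k)_{\Upsilon}$, whose defining relations in Definition~\ref{relate} are tailored precisely to this kind of rewriting, the classes of $V_{12}|_{S_k}$ and $V_{21}|_{S_k}$ therefore differ only by the factor
$$\mathbb{L}^{(\dim \Ext^1(E_2, E_1)-\dim \Hom(E_2, E_1))-(\dim \Ext^1(E_1, E_2)-\dim \Hom(E_1, E_2))},$$
where $E_i \cneq \rho_i(u_i)$, since the remaining automorphism contributions from $\Aut(E_1)$ and $\Aut(E_2)$ are common to both sides. The desired equality of classes then reduces to the numerical identity
$$\dim\Ext^1(E_2, E_1)-\dim\Hom(E_2, E_1)=\dim \Ext^1(E_1, E_2)-\dim \Hom(E_1, E_2),$$
which follows from Serre duality $\Ext^i(E_a, E_b)\cong \Ext^{3-i}(E_b, E_a)^\ast$ on the Calabi-Yau threefold $X$, together with the Hirzebruch-Riemann-Roch vanishing $\chi(E_1, E_2) = 0$: since $\ch(E_i)$ lives in codimension $\ge 2$, every term of total codimension $3$ in $\ch(E_1)^\vee\ch(E_2)\td(X)$ vanishes on a threefold. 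Summing over strata and pushing forward along $\mu$ yields $\mu_\ast[V_{12}]=\mu_\ast[V_{21}]$ in $K_0(\St/A)_{\Upsilon}$, and applying $\bar\Upsilon_A$ gives (\ref{ups2}).

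The main technical obstacle is the stratum-wise quotient-stack presentation of $V_{12}|_{S_k}$: the Ext groups are a priori the cohomology of a two-term complex of vector bundles on $S_k$, not themselves vector bundles, so realizing $V_{12}|_{S_k}$ as a genuine global quotient to which Definition~\ref{relate}(iii) applies requires either shrinking $S_k$ and choosing a local resolution, or working with a universal resolution of the Ext complex. Once this presentation is in place, the motivic cancellation in the exponent of $\mathbb{L}$ is automatic, and the Calabi-Yau identity above closes the argument.
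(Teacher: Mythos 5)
Your proposal is correct and follows essentially the same route as the paper's own proof: stratify $U_1\times U_2$ by the dimensions of $\Hom$ and $\Ext^1$ in both directions, present the Hall product over each stratum as a quotient of the $\Ext^1$-space by the trivially acting $\Hom$-space so that the relations in $K_0(\St/\cdot)_{\Upsilon}$ yield powers of $\mathbb{L}$, factor the map to $A$ through $(u_1,u_2)\mapsto \pi(\rho_1(u_1)\oplus\rho_2(u_2))$ via (\ref{dirsum}), and cancel the exponents by the Calabi--Yau identity $\dim\Ext^1(E,F)-\dim\Hom(E,F)=\dim\Ext^1(F,E)-\dim\Hom(F,E)$ (the paper's Sublemma~\ref{cy}, from Serre duality and $\chi(E,F)=0$). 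The technical point you flag about the $\Ext$-sheaves is handled in the paper exactly as you suggest, by refining the stratification until the relevant families become (trivial) bundles.
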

   \begin{proof}
   For $\mathbb{C}$-valued points 
   $p_i \in U_i$, let $E(p_i) \in \aA$ be the 
   objects corresponding to $\rho _i(p_i)$. Let us 
    decompose $U_1 \times U_2$ into finite
   locally closed pieces, 
   $$U_1 \times U_2 =\coprod _k W_k,$$
   such that the dimensions of
   $\Ext ^j(E(p_1), E(p_2))$, 
   $\Ext ^j(E(p_2), E(p_1))$ are 
   constant on each $W_k$
    for $j=0,1$. Furthermore
   we may assume that the bundles
   $$
   \bigcup _{(p_1, p_1)\in W_k}\Ext ^j(E(p_2), E(p_1)) \to W_k,\quad
   \bigcup _{(p_1, p_2)\in W_k}\Ext ^j(E(p_1), E(p_2)) \lr W_k, $$
   are trivial bundles with fibers $V^j _k$, $\bar{V}^{j}_k$ for $j=0,1$
   respectively. 
   Let us consider the diagram (\ref{Hall}) which defines $\ast$-product
   on $K_0(\St/\mathfrak{Obj}(\aA))$. Then 
   the set of $\mathbb{C}$-valued points of the following stack, 
   \begin{align}\label{value}
   W_k \times_{\mathfrak{Obj}(\aA) \times \mathfrak{Obj}(\aA)}
   \mathfrak{Ex}(\aA), \end{align}
   is identified with the $\mathbb{C}$-valued points of 
   $W_k \times V_k^1$. Let 
   $0 \to E(p_1) \to E \to E(p_2) \to 0$
   be an exact sequence in $\aA$ which represents a
    $\mathbb{C}$-valued point of (\ref{value}). 
   Then the stabilizers at this point in (\ref{value}) is 
   identified with  
   the fiber at $(\id, \id)$ of the following morphism, 
   $$\Aut (0 \to E(p_1) \to E \to E(p_2) \to 0) \lr 
   \Aut(E(p_1)) \times \Aut(E(p_2)),$$
   which is isomorphic to $V_k^0=\Hom (E(p_2), E(p_1))$. 
   Hence we have 
   $$(p_1, p_3)^{\ast}\{(f_1, f_2)|_{W_k}\}
   =[W_k \times V^1_k /V^0_k],$$
   where $V^0 _k$ acts on 
   $W_k \times V^1 _k$ trivially. 
   Therefore we can write $f_1 \ast f_2$ in the following form,  
   $$f_1 \ast f_2 =\sum  _k
   [([W_k \times V^1 _k/V^0 _k], \rho ' _k)],$$
   for some 1-morphism $\rho _k' \colon [W_k \times V^1_k/V^0_k] \to 
   \mathfrak{Obj}^v(\aA)$.
    (Also see~\cite[Theorem 5.18]{Joy4}.)
   Let us consider the composition, 
   \begin{align}\label{compo}
   W_k \times V^1 _k \lr [W_k \times V^1 _k/V^0 _k]
   \stackrel{\rho _k '}{\lr}\mathfrak{Obj}^v(\aA) 
   \stackrel{\pi}{\lr} A.\end{align}
   By the assumption (\ref{dirsum}), the above morphism
    is nothing but the following map, 
   $$W_k \times V^1 _k \ni (p_1, p_2, v) \longmapsto \pi([E(p_1) 
   \oplus E(p_2)]).$$
   Hence the morphism (\ref{compo}) descends to the morphism,
   $\rho _k ^{\dag} \colon W_k \to A$, 
   which takes $(p_1, p_2)$ to $\pi([E(p_1) 
   \oplus E(p_2)])$.
   Therefore by Lemma~\ref{fol} and using (\ref{quot}), we have 
   $$\Upsilon _A'(\pi _{\ast}(f_1 \ast f_2))=\sum _k
   \mathbb{L}^{\dim V_k ^1 -\dim V_k ^0}
   \Upsilon _A([(W_k, \rho _k ^{\dag})]).$$
   (Recall that we have defined $\mathbb{L}=
   \Upsilon(\mathbb{A}^1)$ in Definition~\ref{mode}.)
    Arguing as in the same way for $\Upsilon _A'(\pi _{\ast}(f_2 \ast f_1))$
    and taking their difference, we obtain
    $$\Upsilon _A'(\pi _{\ast}[f_1, f_2])=\sum _k
   (\mathbb{L}^{\dim V_k ^1 -\dim V_k ^0}-\mathbb{L}^{\dim \bar{V}_k ^{1} -\dim \bar{V}_k ^{0}})
   \Upsilon _A([(W_k, \rho _k ^{\dag})]).$$
   Then Sublemma~\ref{cy} below shows $\dim V_k ^1 -\dim V_k ^0=
   \dim \bar{V}_k ^{1} -\dim \bar{V}_k ^{0}$, hence 
   (\ref{ups}) follows.  
    \end{proof}
    We have used the following sublemma. 
   \begin{sublem}\label{cy}
   Let $\aA \subset \dD _X$ be the heart of a t-structure and
   take $E, F\in \aA$. Then one has 
   $$\dim \Ext ^1(E,F)-\dim \Hom (E,F)=\dim \Ext ^1(F,E)-\dim \Hom (F,E).$$
   \end{sublem}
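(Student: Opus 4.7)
The plan is to reduce the claim to the combination of two ingredients: Serre duality on the Calabi--Yau 3-fold $X$, and the vanishing of the Euler pairing $\chi(E,F)$ for $E,F\in\dD_X$. Write $h^i(E,F)\cneq \dim\Ext^i(E,F)$ for brevity.

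First I would observe that since $\aA$ is the heart of a bounded t-structure on $\dD_X$, $h^i(E,F)=0$ for $i<0$, and since $X$ is a smooth 3-fold, $h^i(E,F)=0$ for $i>3$. Therefore
$$\chi(E,F)=h^0(E,F)-h^1(E,F)+h^2(E,F)-h^3(E,F),$$
and analogously for $\chi(F,E)$. The key computational step is the vanishing $\chi(E,F)=0$. This follows from Hirzebruch--Riemann--Roch: $\chi(E,F)=\int_X\ch(E^\vee)\ch(F)\td(X)$. Since $E,F\in\dD_X$ are supported in dimension $\le 1$ on the 3-fold $X$, one has $\ch_i(E)=\ch_i(F)=0$ for $i\le 1$, so both $\ch(E^\vee)$ and $\ch(F)$ live in $H^{\ge 4}(X,\mathbb{C})$. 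Their product thus lies in $H^{\ge 8}(X,\mathbb{C})=0$, forcing $\chi(E,F)=0$.

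Finally, since $\omega_X\cong\oO_X$, Serre duality provides $\Ext^i(E,F)\cong\Ext^{3-i}(F,E)^\ast$; in particular $h^2(E,F)=h^1(F,E)$ and $h^3(E,F)=h^0(F,E)$. Substituting into $\chi(E,F)=0$ yields
$$h^0(E,F)-h^1(E,F)+h^1(F,E)-h^0(F,E)=0,$$
which rearranges to exactly the claimed equality. I do not expect a serious obstacle here, as both Riemann--Roch and Serre duality are standard tools; the only real content is the use of the support bound built into $\dD_X$. Without it, the weaker identity $\chi(E,F)=-\chi(F,E)$, which holds automatically from Serre duality on any Calabi--Yau 3-fold, would be all one could extract, and it is insufficient to deduce the sublemma.
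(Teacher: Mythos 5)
Your proposal is correct and follows essentially the same route as the paper: Serre duality on the Calabi--Yau 3-fold to identify $\Ext^2(E,F)$ with $\Ext^1(F,E)^\ast$ and $\Ext^3(E,F)$ with $\Hom(F,E)^\ast$, combined with the Riemann--Roch vanishing $\chi(E,F)=0$ coming from $\ch_0=\ch_1=0$ for objects of $\dD_X$. The only cosmetic remark is that the vanishing of $\Ext^{i}(E,F)$ for $i>3$ in an arbitrary heart is itself most cleanly seen via Serre duality together with the vanishing of negative Ext's in a heart, but this does not affect the argument.
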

   \begin{proof}
   Since $X$ is a Calabi-Yau 3-fold, we have 
   \begin{align*}
   \chi (E,F) &\cneq \sum _{k}(-1)^k \dim \Ext ^k(E,F)\\
   &=-\dim \Ext ^1(E,F)+\dim \Hom (E,F)+\dim \Ext ^1(F,E)-\dim \Hom (F,E),
   \end{align*}
   by Serre duality. On the other hand
   Riemann-Roch implies $\chi (E,F)=0$ because $\ch _i(E)=\ch _i(F)=0$
   for $i=0,1$.  
   \end{proof}
   Combined with Lemma~\ref{dag}, we have the following. 
   \begin{cor}\label{dep}
   For any $\sigma \in U_X$ and $v=(\beta,1)$, 
   we have $P(v,\sigma) =P(v,\sigma _{(0,\omega)})\in \mathbb{Z}[t,s]^{\dag}$, 
   and $n_g^{\beta}(X)$ does not depend on a choice of $\omega$. 
   \end{cor}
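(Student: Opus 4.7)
The plan is to deduce the corollary directly by combining Proposition~\ref{choice} with Proposition~\ref{dag}. First I would note that both pieces of input have already been established in a form which makes the statement essentially formal: Proposition~\ref{choice} asserts that $P(v,\sigma)$ is constant as $\sigma$ ranges over $U_X$ for any fixed $v=(\beta,k)$, while Proposition~\ref{dag} shows that at the distinguished points $\sigma_{(0,\omega)}\in U_X$ with $\omega$ ample and with the specific choice $v=(\beta,1)$, the resulting element of $\Lambda=\mathbb{Q}(t,s)$ actually lies in the subset $\mathbb{Z}[t,s]^{\dag}$.

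More precisely, the first step is to apply Proposition~\ref{choice} to the two stability conditions $\sigma$ and $\sigma_{(0,\omega)}$, which both lie in $U_X$ (for any choice of ample $\omega \in N^1(X)$), and conclude the equality
\[ P(v,\sigma) = P(v,\sigma_{(0,\omega)}). \]
The second step is to invoke Proposition~\ref{dag} for the right-hand side, which gives $P(v,\sigma_{(0,\omega)})\in \mathbb{Z}[t,s]^{\dag}$; combining the two yields the first assertion. The third step is to observe that for two different ample classes $\omega_1,\omega_2$, the corresponding stability conditions $\sigma_{(0,\omega_1)},\sigma_{(0,\omega_2)}$ both lie in $U_X$, so another application of Proposition~\ref{choice} gives $P(v,\sigma_{(0,\omega_1)})=P(v,\sigma_{(0,\omega_2)})$.

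Finally, for the statement about $n_g^{\beta}(X)$, I would unwind Definition~\ref{mgd}: the integer $n_g^{\beta}(X)$ is computed from the coefficients $\nu_l^{\alpha}(\beta)$ of the polynomial $P(v,\sigma_{(0,\omega)})^{\flat}$ via the combinatorial formula (\ref{gvde}), and the operation $Q\mapsto Q^{\flat}$ of Definition~\ref{daggar} depends only on $Q\in \mathbb{Z}[t,s]^{\dag}$. Since the $\omega$-independence of $P(v,\sigma_{(0,\omega)})$ was already shown, the coefficients $\nu_l^{\alpha}(\beta)$ and hence $n_g^{\beta}(X)$ are independent of $\omega$.

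There is essentially no obstacle here: all the content sits in Propositions~\ref{choice} and~\ref{dag}, and the corollary is a bookkeeping combination of these. The only minor point worth verifying is that $\sigma_{(0,\omega)}$ genuinely belongs to $U_X$ for every ample $\omega$, which is immediate from the definition $U_X=\{\sigma_{(B,\omega)}\mid B+i\omega\in A(X)_{\mathbb{C}}\}$ by taking $B=0$.
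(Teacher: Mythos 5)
Your proposal is correct and matches the paper's own route: the corollary is stated there as an immediate consequence of Proposition~\ref{choice} (constancy of $P(v,\sigma)$ over $U_X$) combined with Proposition~\ref{dag} (membership in $\mathbb{Z}[t,s]^{\dag}$ at $\sigma_{(0,\omega)}$), exactly as you argue. Your final bookkeeping step, that $n_g^{\beta}(X)$ depends only on $P(v,\sigma_{(0,\omega)})^{\flat}$ via Definition~\ref{mgd}, is the same observation the paper leaves implicit.
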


   \section{Birational invariance of the counting invariants}\label{Bir}
   Now we state our main theorem. 
   
   \begin{thm}\label{goal}
   Let $\phi \colon W\dashrightarrow X$ be a 
   birational map between smooth projective Calabi-Yau 3-folds. 
   Then for $\beta \in N_1(W)$,
   one has 
   $$n_{g}^{\beta}(W)=n_{g}^{\phi _{\ast}\beta}(X).$$
   \end{thm}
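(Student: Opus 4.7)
The plan is to follow the strategy sketched in \S 1.3: transport $n_g^\beta(W)$ to $X$ using a Bridgeland equivalence $\Phi \colon \dD_W \to \dD_X$, and then prove that the counting invariant $P(v,\sigma)$ is constant on the connected component $\Stab^{\ast}(X) \subset \Stab(X)$ containing both $U_X$ and $\Phi_{\ast} U_W$. Since $W$ and $X$ are both Calabi-Yau, $\phi$ is automatically an isomorphism in codimension one, so $\phi_{\ast} \beta$ makes sense; and by the factorization of a birational map between Calabi-Yau 3-folds into a sequence of flops together with Bridgeland's result on derived equivalences under flops, one obtains $\Phi$. The first step is to check the numerical compatibility: under the induced $\Phi_{\ast} \colon K(\dD_W) \to K(\dD_X)$, the class $v=(\beta,1) \in N_1(W) \oplus \mathbb{Z}$ corresponds to $(\phi_{\ast}\beta, 1)$, since $\Phi$ acts as $\phi_{\ast}$ on one-cycle classes and preserves the class of a generic point.

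Next, by Lemma~\ref{pt} the closures $\overline{\Phi_{\ast}U_W}$ and $\overline{U_X}$ share a point in $\Stab(X)$, so both open subsets lie in the common connected component $\Stab^{\ast}(X)$. Pick any $\sigma_W \in U_W$ and $\sigma_X \in U_X$ and join $\Phi_{\ast}\sigma_W$ to $\sigma_X$ by a path in $\Stab^{\ast}(X)$; enclose it in a compact set $\overline{\uU}$. By Proposition~\ref{wallcham}, $\overline{\uU}$ decomposes into finitely many chambers separated by real-codimension-one walls. Within a chamber, the set of $\sigma$-semistable objects of type $v=(\phi_{\ast}\beta,1)$ does not change, so $\delta^{(v,\phi)}(\sigma)$ and $\epsilon^{(v,\phi)}(\sigma)$ are constant, hence so is $P(v,\sigma)$. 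Across a wall, Proposition~\ref{trans} expresses $\epsilon^{(v,\phi)}(\sigma)-\epsilon^{(v,\phi)}(\tau)$ as a $\mathbb{Q}$-linear combination of multiple commutators of elements $\epsilon^{(v_i,\phi_i)}(\tau)$ with $\sum v_i=v$. After applying $(\mathbb{L}-1)\Upsilon_{A}'(\pi_{\ast})$, Lemma~\ref{gensitu} kills every commutator term, because for any short exact sequence $0 \to E_1 \to E \to E_2 \to 0$ in the ambient heart one has $s(E)=s(E_1\oplus E_2)$ on the level of cycles. Thus $P(v,\sigma)$ is locally constant, hence constant along the path.

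Finally, by the functoriality of the Hall-algebra operations under $\Phi$ and the construction of $P$, one has $P_W((\beta,1),\sigma_W) = P_X((\phi_{\ast}\beta,1),\Phi_{\ast}\sigma_W)$; together with the constancy just established this equals $P_X((\phi_{\ast}\beta,1),\sigma_X)$. Both sides belong to $\mathbb{Z}[t,s]^{\dag}$ by Proposition~\ref{dag}, so the Jordan-cell multiplicities $\nu_l^{\alpha}(\beta)$ on $W$ and $\nu_l^{\alpha}(\phi_{\ast}\beta)$ on $X$ agree, and substituting into formula \eqref{gvde} yields $n_g^\beta(W)=n_g^{\phi_{\ast}\beta}(X)$. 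The main obstacle is the wall-crossing step: the proof of Proposition~\ref{choice} stays inside $U_X$ where the heart is always $\Coh_{\le 1}(X)$ and the cycle map is visibly additive, but along a path through $\Stab^{\ast}(X)$ the relevant hearts $\qQ((\psi-1,\psi])$ may consist of genuine complexes rather than sheaves. One must therefore extend the cycle map $\pi$ to $\mathfrak{Obj}^v(\qQ((\psi-1,\psi]))$ and verify that the direct-sum invariance $\pi([E])=\pi([E_1\oplus E_2])$ persists in these more general hearts, in order to apply Lemma~\ref{gensitu} globally; this is the delicate technical point on which the whole argument pivots.
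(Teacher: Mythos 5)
Your overall philosophy (transport via the flop equivalence $\Phi$, then show the counting invariant $P(v,\sigma)$ does not change under change of stability condition) matches the paper, but your execution has a genuine gap at exactly the point you flag at the end, and that point is not a ``delicate technical detail'' to be verified --- it is the main content of the paper's proof, and your path-in-$\Stab^{\ast}(X)$ strategy does not supply it. For a general stability condition along a path joining $\Phi_{\ast}\sigma_W$ to $\sigma_X$, the quantity $P(v,\sigma)$ is simply not defined: Definition~\ref{mode} only covers $\sigma\in U_X$, because $P$ is a \emph{relative} motivic invariant $\Upsilon'_A$ over the Chow variety $A=\Chow_{\beta}(X)$, and one needs a cycle map $\pi$ from the stack of semistable objects to $A$ satisfying the direct-sum property (\ref{dirsum}) in order to invoke Lemma~\ref{gensitu}. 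For hearts $\qQ((\psi-1,\psi])$ at arbitrary points of $\Stab^{\ast}(X)$ there is no such map (the objects are genuine complexes), and in addition the algebraicity/finite-type statements for $\mM^{(v,\phi)}(\sigma)$ (Proposition~\ref{Artin}) are only established for $\sigma\in\overline{U}_X$, so even the elements $\epsilon^{(v,\phi)}(\sigma)$ you want to compare are not available off $\overline{U}_X$. Saying ``one must extend the cycle map and verify additivity'' restates the problem; it does not solve it, and it is not clear it can be solved along an arbitrary path.

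The paper's proof never leaves $\overline{U}_W$ and $\overline{U}_X$. After reducing to a single flop over $Y$ (Kawamata, as you do), it constructs in Proposition~\ref{pt}(iii) specific \emph{boundary} points of $\overline{U}_X$ whose hearts are the perverse hearts $\pPPer(\dD_X)$; the crucial point, Proposition~\ref{pt}(i), is that $\dR f_{\ast}E\in\Coh_{\le 1}(Y)$ for perverse objects, so one gets a cycle map $\bar{\pi}(E)=s(\dR f_{\ast}E)$ to $\Chow_{f_{\ast}\beta}(Y)$ --- the Chow variety of the \emph{common base} $Y$, which is the same for $W$ and $X$. This gives the extended definition of $P$ at these boundary points (Definition~\ref{abo}). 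Proposition~\ref{bound} then compares the boundary point with a nearby interior point of $U_X$ via a single application of the local formula (Proposition~\ref{trans}), using Lemma~\ref{comm} (built on the finiteness of $f_{\ast}\colon\Chow_{\beta}(X)\to\Chow_{f_{\ast}\beta}(Y)$ and Lemma~\ref{finite}) to reconcile the two cycle maps, with a separate treatment of the phase $\phi=1$ case where $f^{\ast}\omega'\cdot\beta=0$. Finally, $\Phi_{\ast}$ sends the boundary point of $\overline{U}_W$ exactly to one of the boundary points of $\overline{U}_X$, identifies the perverse hearts and the Hall-algebra elements $\epsilon^{(v,\phi)}$, and the two $\bar{\pi}$'s land in the same $\Chow(Y)$ (diagram (\ref{final})), which yields $P(W,v,\sigma)=P(X,\phi_{\ast}v,\Phi_{\ast}\sigma)$. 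Without this perverse-heart/base-Chow-variety mechanism, your wall-crossing-along-a-path argument cannot be carried out, so as written the proposal does not constitute a proof.
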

The strategy is as follows. First we enlarge the 
definition of $P(v,\sigma) \in \Lambda$
for some boundary points $\sigma \in \overline{U}_X$, 
 and show $P(v,\sigma)=P(v,\tau)$ for $\tau \in U_X$.  
Next we compare $P(v,\sigma)$ with $P(v',\sigma')$ defined for 
$\sigma ' \in \overline{U}_W$, using the derived equivalence~\cite{Br1}, 
$\Phi \colon \dD_{W}\to \dD_{X}$.

  \subsection{Perverse t-structures on $\dD _X$}
  Before giving the proof of Theorem~\ref{goal}, we investigate 
  some boundary points in $\overline{U}_X$. 
    We assume there is a diagram of birational maps, 
    \begin{align}\label{flop}\xymatrix{
   (C^{\dag}\subset W) \ar[rd]_{g} & & (X \supset C)\ar[dl]^{f} \\
   & (0\in Y), & }\end{align}
   where $C$ and $C^{\dag}$ are tree of rational curves.  
  Furthermore we assume that relative Picard numbers of 
  $f$ and $g$ are one, and $\phi \colon W\dashrightarrow X$
  is not an isomorphism. In this case the diagram (\ref{flop})
  is called a \textit{flop}. (cf.~\cite{Kof}.)
   The main technical tool we use here is the notion of 
   \textit{perverse t-structures} associated to 
   $f\colon X\to Y$. It was introduced by T.~Bridgeland~\cite{Br1}
   to construct the derived equivalence between $W$ and $X$. 
   Below we collect some results we need.
   
  \begin{prop}\label{pt}
  There are hearts of bounded t-structures $\pPPer(\dD_X)\subset 
  \dD_X$ for $p=-1,0$ which satisfy the following. 
  
  (i) For any $E\in \pPPer(\dD_X)$, we have 
  $\dR f_{\ast}E \in \Coh_{\le 1}(Y)$. 
  
   (ii)
  There is an equivalence $\Phi \colon \dD _{W} \to \dD _X$ 
  which restrict to the equivalence, 
  $$\Phi \colon \iPPer (\dD_W) \lr \oPPer (\dD_X).$$
  Furthermore $\Phi$ induce the following commutative diagrams,  
  \begin{align}\label{com3}\xymatrix{
  \dD _W \ar[r]^{\Phi} \ar[d]_{(\ch_2, \ch _3)} & \dD _X 
  \ar[d]^{(\ch_2,\ch_3)}\\
  N_1(W)\oplus \mathbb{Z} \ar[r]^{\phi _\ast} &
   N_1(X)\oplus \mathbb{Z},}
   \quad 
\xymatrix{
\Stab(W) \ar[r]^{\Phi_{\ast}} \ar[d]_{\zZ _W} & \Stab(X) \ar[d]^{\zZ _X} \\
N^1(W)_{\mathbb{C}} \ar[r]^{\phi _{\ast}}& N^1(X)_{\mathbb{C}}.}\end{align} 
  Here $\phi_{\ast}$ in the left diagram takes $(\beta,k)$ to 
  $(\phi_{\ast}\beta,k)$, and $\Phi_{\ast}$ is the natural isomorphism 
  induced by the equivalence $\Phi$.  
  
  (iii) Let $H$ be a relatively ample divisor on $X$ over $Y$.  
  Then for a sufficiently small $0<\delta \ll 1$
  and an ample divisor $\omega'$ on $Y$, the pairs
  \begin{align}\label{pair}\sigma _{(-\delta H, f^{\ast}\omega')}
  =(Z_{(-\delta H, f^{\ast}\omega')}, \oPPer (\dD_X)), 
  \quad \sigma _{(\delta H, f^{\ast}\omega')}
  =(Z_{(\delta H, f^{\ast}\omega')}, \iPPer (\dD_X)),\end{align}
  determine stability conditions contained in $\overline{U}_X$. 
  
  (iv) For $p=-1,0$, the stack of objects $E\in \pPPer(\dD_X)$, 
   $$\fPPer(X) \subset \mM, $$
 is an open substack of $\mM$.

    \end{prop}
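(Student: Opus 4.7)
The plan is to deduce (i)--(iv) from Bridgeland's construction of perverse hearts and the flop equivalence in~\cite{Br1}, after restricting everything from $D^b(\Coh(X))$ to the subcategory $\dD_X$. Define $\pPPer(\dD_X) \cneq \pPPer(X/Y) \cap \dD_X$, where $\pPPer(X/Y) \subset D^b(\Coh(X))$ is Bridgeland's perverse heart. That this defines the heart of a bounded t-structure on $\dD_X$ reduces to checking that the perverse cohomology sheaves of any $E\in \dD_X$ again lie in $\dD_X$, which is clear because the perverse cohomology functors are built from $\dR f_\ast$, $\dL f^\ast$ and ordinary coherent cohomology sheaves, all of which preserve the support condition $\dim \Supp \le 1$. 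Statement (i) is then the defining property of $\pPPer(X/Y)$: $\dR f_\ast$ sends this heart into $\Coh(Y)$, and an object in $\dD_X$ has pushforward of dimension $\le 1$ on $Y$.

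For (ii), the Fourier--Mukai equivalence $\Phi \colon D^b(W) \lr D^b(X)$ of~\cite{Br1} has kernel supported on the fibered product $W\times_Y X$; away from the flopping locus this product is identified with the graph of $\phi$, so $\Phi$ preserves the one-dimensional support condition and restricts to $\dD_W \to \dD_X$. By~\cite{Br1}, $\Phi$ exchanges $\iPPer(W/Y)$ with $\oPPer(X/Y)$, hence also their intersections with $\dD_W$ and $\dD_X$. The left square of~(\ref{com3}) is a Grothendieck--Riemann--Roch computation: both projections $W\times_Y X \to W, X$ are isomorphisms in codimension one and the threefolds are $K$-trivial, so the induced map on $(\ch_2, \ch_3)$ coincides with the strict-transform map $\phi_\ast$ on $N_1(W)\oplus \mathbb{Z}$. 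The right square is then formal, since $\Phi_\ast$ acts on $\Stab$ by $(Z,\pP) \mapsto (Z \circ \Phi^{-1}, \Phi(\pP))$ and $\zZ$ factors through $(\ch_2, \ch_3)$.

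For (iii), we verify Proposition~\ref{equiv} applied to the heart $\pPPer(\dD_X)$ with central charge $Z_{(\pm\delta H, f^\ast\omega')}$. For nonzero $E\in \pPPer(\dD_X)$ of numerical type $(\beta,k)$,
\begin{align*}
\Imm Z_{(\pm\delta H, f^\ast\omega')}(E) = f^\ast\omega' \cdot \beta = \omega' \cdot f_\ast \beta \ge 0,
\end{align*}
with equality forcing $\beta$ to be a class on the flopping curves. In this degenerate case, (i) together with the standard description of objects in $\pPPer$ with vanishing pushforward (torsion sheaves on $C$, up to shift) combined with relative ampleness of $H$ and the sign of $\delta$ forces $\Ree Z <0$ strictly. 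The Harder--Narasimhan property and local finiteness are then established by a tilting argument along the lines of~\cite[Lemma 4.1]{Tst}. Membership in $\overline{U}_X$ is proved by approximation: choose ample classes $\omega_t$ with $\omega_t \to f^\ast\omega'$, and check that the points $\sigma_{(\pm\delta H, \omega_t)} \in U_X$ converge to the claimed boundary point in Bridgeland's topology on $\Stab(\dD_X)$.

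Finally (iv) follows from upper semicontinuity of the dimensions of the higher direct images $R^jf_\ast \eE_s$ in flat families $\eE \in D^b(X \times S)$, since the conditions defining $\pPPer(X/Y)$ are vanishing conditions on such sheaves together with one open $\Hom$-vanishing condition. The main obstacle will be the continuity step in (iii): one must show that the tilted heart $\pPPer(\dD_X)$ really is the limit, in the sense of~\cite[Section 6]{Brs1}, of the hearts $\Coh_{\le 1}(X)$ associated to the ample classes $\omega_t$, which requires uniform mass bounds on semistable factors and a careful tracking of Harder--Narasimhan filtrations as $\omega_t$ crosses the nef boundary.
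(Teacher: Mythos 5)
Your definition of $\pPPer(\dD_X)$ and your treatment of (i), (ii) and of the stability-condition axioms in (iii) run parallel to the paper: the paper realizes $\oPPer(\dD_X)$ as the tilt of $\Coh_{\le 1}(X)$ at the torsion pair $(\fF_0,\tT_0\cap\Coh_{\le 1}(X))$ rather than via truncation functors, proves the left square of (\ref{com3}) by pairing with $\oO_X(D)$ and adjunction rather than by GRR, and checks (\ref{up}) on the generators $\omega_{f^{-1}(0)}[1]$, $\oO_{C_i}(-1)$, $\widetilde{\Coh}_{\le 1}(X)$, but these are inessential differences. The genuine gap is exactly the point you yourself flag as ``the main obstacle'': the inclusion $\sigma_{(\pm\delta H, f^{\ast}\omega')}\in\overline{U}_X$. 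You propose to realize these points as limits of $\sigma_{(\pm\delta H,\omega_t)}\in U_X$ with $\omega_t\to f^{\ast}\omega'$, but you give no mechanism for controlling the slicings: convergence in Bridgeland's topology needs $d(\pP_t,\pP)\to 0$, i.e.\ uniform control of Harder--Narasimhan filtrations as $\omega_t$ degenerates to the nef class, and that is precisely what has to be proved, not announced. The paper argues in the opposite direction: it first establishes the norm bound $\lVert Z_{(-\delta H,f^{\ast}\omega')}-Z_{(B,\omega)}\rVert_{\sigma}<\infty$, by writing the class of any object of $\oPPer(\dD_X)$ as $[F]+\sum_i a_i[S_i]$ with $F\in\widetilde{\Coh}_{\le 1}(X)$ and using compactness of a slice of a dual cone chosen inside the big cone (this is where the extra hypothesis of \cite[Proposition 4.4]{Tst} on a smooth $f^{-1}(Y_0)$ is removed); it then applies the deformation theorem \cite[Theorem 7.1]{Brs1} to produce $\tau=(Z_{(B,\omega)},\qQ)$ with $d(\sigma,\tau)<\varepsilon$, and finally shows $\qQ((0,1])\subset\Coh_{\le 1}(X)$, so $\tau\in U_X$ and hence $\sigma\in\overline{U}_X$. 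Without either this argument or a fully worked-out version of your limit argument (which would require essentially the same uniform estimates), part (iii) is not proved.

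A secondary weakness is (iv). The defining conditions of $\pPPer(X/Y)$ involve the cohomology sheaves $H^{-1}(E)$, $H^{0}(E)$ and the vanishing $\Hom(\mathfrak{C},H^{-1}(E))=0$ against the whole subcategory $\mathfrak{C}=\{E\in\Coh(X)\mid \dR f_{\ast}E=0\}$; since the $H^{i}(\eE_s)$ of a family do not vary flatly, semicontinuity of $R^{j}f_{\ast}\eE_s$ does not directly give openness, and the Hom-vanishing is not a single open condition. The paper avoids this by Van den Bergh's criterion: there are vector bundles ${}^{p}\eE$ such that $E\in\pPPer(\dD_X)$ if and only if $\dR f_{\ast}\dR\hH om({}^{p}\eE,E)\in\Coh(Y)$, which is manifestly an open condition in families; you should either quote this or justify your semicontinuity claim in detail.
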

    \begin{proof}
    In~\cite[Section 3]{Br1}, Bridgeland constructed the hearts of 
 some bounded t-structures
 $\pPPer(X/Y)$ on $D(X)$ for $p=-1,0$. 
 For simplicity we discuss the case of $p=0$. 
According to~\cite[Lemma 3.1]{MVB}, the
abelian category $\oPPer(X/Y)$ is obtained from $\Coh(X)$ as 
a tilting of the torsion pair, 
\begin{align*}
\tT _0 &=\{T\in \Coh(X) \mid R^1f_{\ast}T=0\}, \\
\fF_0 &=\{ F\in \Coh(X) \mid f_{\ast}F=0, \Hom(\mathfrak{C},F)=0\},
\end{align*}
where $\mathfrak{C}\cneq \{E\in\Coh(X) \mid \dR f_{\ast}E=0\}$,
i.e. $\oPPer(X/Y)$ is generated by $\fF _0[1]$ and $\tT _0$. 
 Let us define $\pPPer(\dD_X)$ to be
 $$\pPPer(\dD_X)\cneq \dD_X \cap \pPPer(X/Y).$$
 We have to check that $\pPPer(\dD_X)$ is the heart of a bounded t-structure 
 on $\dD_{X}$.
Since $\fF _0 \subset \Coh_{\le 1}(X)$, 
the pair $(\fF_0, \tT_0 \cap \Coh_{\le 1}(X))$ also
determines a torsion pair on $\Coh_{\le 1}(X)$,
and the corresponding tilting is $\oPPer(\dD_X)$. 
Thus $\oPPer(\dD_X)$ is the heart of a bounded t-structure on $\dD _X$. 
(cf.~\cite{HRS}.)

\hspace{3mm}
    
  (i) For $E\in \pPPer(\dD_X)$, the object 
$\dR f_{\ast}E$ must be a sheaf by the definition of 
$\pPPer(X/Y)$ in~\cite[Section 3]{Br1}. 

\hspace{3mm}

(ii) In~\cite{Br1}, Bridgeland constructed the equivalence, 
$$\Phi \colon D(W) \lr D(X),$$
which restricts to an equivalence between $\iPPer(W/Y)$ and 
$\oPPer(X/Y)$. Furthermore Chen~\cite{Ch}
showed that $\Phi$ is given by a Fourier-Mukai functor with kernel 
$\oO_{W\times _Y X}$. Because $\phi \colon W\dashrightarrow X$
is an isomorphism in codimension one, the equivalence 
$\Phi$ takes $\dD _{W}$ to $\dD_{X}$.

 For the left diagram of (\ref{com3}), 
let us take a divisor $D$ on $X$ and 
$E\in \dD_{W}$. By Riemann-Roch theorem, we have 
\begin{align}\label{ri1}
\chi (\oO_X(D), \Phi(E)) &= -D\cdot \ch _2\Phi(E) +\ch _3 \Phi(E), \\
\label{ri2}
\chi (\Phi^{-1}\oO_X(D), E) &=-\phi_{\ast}^{-1}D\cdot \ch _2(E) +\ch _3(E).
\end{align}
Here we have used the fact that $\ch _1\Phi^{-1}\oO_X(D)=\phi_{\ast}^{-1}D$.
This follows because $\Phi^{-1}(\oO_X(D))$ and $\oO_X(\phi_{\ast}^{-1}D)$
are isomorphic over $W\setminus C^{\dag}$, and $C^{\dag}$ has 
codimension two in $W$. (cf.~\cite[Lemma 3.15]{Tst}.)
By adjunction we must have (\ref{ri1})$=$(\ref{ri2}), and 
this holds for any divisor $D$. Thus we have 
$$(\ch _2 \Phi(E), \ch _3 \Phi(E))=(\phi_{\ast}\ch_2 (E), \ch_3 (E)),$$
by the definition of $\phi_\ast \colon N_1(W) \to N_1(X)$. 
  
   For the commutativity of the right diagram of (\ref{com3}), 
  the same proof of~\cite[Lemma 4.8]{Tst} is applied, and 
  we leave the readers to check the detail. 
   
   \hspace{3mm}

   (iii) The same proof of~\cite[Lemma 4.3]{Tst}
shows that the pairs (\ref{pair}) give stability conditions. 
In fact arguing as in~\cite[Lemma 3.8 (iii)]{Tst}, any object 
in $\oPPer(\dD_X)$ is given by a successive extension of the 
following objects, 
\begin{align}\label{gen1}
& S_0=\omega_{f^{-1}(0)}[1], \quad S_i=\oO_{C_i}(-1) \ (1\le i\le m), \\
\label{gen2}
& \widetilde{\Coh}_{\le 1}(X)\cneq  \{F\in \Coh_{\le 1}(X) \mid C_i \nsubseteq \Supp(F)
\mbox{ for all }i
\}.
\end{align}
Here $C_i$ for $1\le i\le m$ are the irreducible components of $C$
and $f^{-1}(0)$ is the scheme theoretic fiber of $f$ at $0\in Y$. 
In order to show (\ref{up}) in Proposition~\ref{equiv},
it is enough to check this for 
the generators (\ref{gen1}), (\ref{gen2}). 
For $Z=Z_{(-\delta H, f^{\ast}\omega ')}$, we have  
\begin{align*}
&Z(S_0)= -1 +\delta H \cdot f^{-1}(0)<0, \quad 
Z(S_i)=-\delta H\cdot C_i <0 \ (1\le i\le m), \\
& \Imm Z(F)>0 \quad \mbox{ for } \ F\in \widetilde{\Coh}_{\le 1}(X)\setminus \{0\},
\end{align*}
thus (\ref{up}) holds. 
We leave the readers to check the
Harder-Narasimhan property, applying 
the proof of~\cite[Lemma 4.3]{Tst}. Also the case of $p=-1$ is similarly 
proved. Finally we have to check 
that the stability conditions determined by (\ref{pair}) are 
contained in $\overline{U}_X$. Since it requires some 
more technical arguments, we postpone it until Section~\ref{tech}.

 \hspace{3mm}
 
  (iv) According to~\cite{MVB}, 
 there are vector bundles ${^{p}\eE}$ on $X$ for $p=-1,0$
 such that
 an object 
 $E\in \dD_X$ is contained in $\pPPer(\dD_X)$
 if and only if 
 \begin{align}\label{condi}
 \dR f_{\ast}\dR \hH om({^{p}\eE}, E) \in \Coh(Y).
 \end{align}
 Since (\ref{condi}) is an open condition, the stack 
 $\fPPer(X)$ is an open substack of $\mM$.

    \end{proof}

  For $v=(\beta, k)\in N_1(X) \oplus \mathbb{Z}$, let 
   $$\fPPer^{v}(X)\subset \fPPer(X),$$
    the substack
   of objects $E\in \pPPer(\dD_X)$
   of numerical type $v$. By Proposition~\ref{pt} (i),
   we have the 1-morphism, 
   $$\bar{\pi}\colon \fPPer^{v}(X) \ni E 
   \longmapsto s(\dR f_{\ast}E) \in \Chow _{f_{\ast}\beta}(Y).$$
  Let $\sigma \in \overline{U}_X$ be one of (\ref{pair}), 
  corresponding to $\pPPer(\dD_X)$ for $p=-1$ or $0$. 
  As in (\ref{sf}), for $0<\phi \le 1$ we have 
  \begin{align}\label{again}
  \epsilon ^{(v, \phi)}(\sigma) \in K_0(\St/\fPPer(X)).
  \end{align}
   \begin{defi}\label{abo}\emph{
   Let $\sigma \in \overline{U}_X$ be one of (\ref{pair}). We
   define $P(v, \sigma)\in \Lambda$ as follows. }
   \begin{itemize}
   \item \emph{If $v\in C_{\sigma}(\phi)$ with 
   $0<\phi \le 1$,
   we define 
   $$P(v, \sigma)\cneq (\mathbb{L}-1)\Upsilon _{\bar{A}} '
   (\bar{\pi}_{\ast}\epsilon ^{(v,\phi)}(\sigma)),$$
   where 
   $\bar{A}=\Chow _{f_{\ast}\beta}(Y)$.
   By (\ref{again}), $\bar{\pi}_{\ast}
   \epsilon ^{(v,\phi)}(\sigma) \in K_0(\St/\bar{A})$ makes sense.}
   \item \emph{If $v\in C_{\sigma}(\phi)$ with 
   $1<\phi \le 2$,
   we define  
   $$P(v, \sigma)\cneq P(-v, \sigma).$$}
   \item \emph{Otherwise we 
   define $P(v, \sigma)=0$. }
   \end{itemize} 
   \end{defi}
   
   \begin{rmk}\emph{
   In~\cite{MVB}, it is shown that
   there are vector bundles ${^{p}\eE}$ on $X$ for $p=-1,0$
   such that there are equivalences,
   $$ \pPPer(X/Y) \stackrel{\sim}{\lr}
   \Coh(f_{\ast}\hH om({^{p}\eE}, {^{p}\eE})). $$
   Since the RHS is the module category over a non-commutative sheaf
   of algebras on $Y$, BPS counting
   constructed from Definition~\ref{abo} and the formula (\ref{gvde}) 
   is interpreted as (approximation of) non-commutative 
   Gopakumar-Vafa invariant. 
   It seems interesting to pursue its relationship to 
   non-commutative Donaldson-Thomas theory on conifold 
   studied by B.~Szendr{\H o}i~\cite{Sz}.}   
   \end{rmk}
   For $\sigma \in \overline{U}_X$ as in Proposition~\ref{pt} (iii),
    let us
   write it $\sigma =(Z, \pP)$ as in Definition~\ref{stde}.
   We also use the abelian category
   $$\aA_{\sigma} =\pP(( -1/2, 1/2]),$$
   and the stack of objects in $\aA_{\sigma}$, $\mathfrak{Obj}(\aA_{\sigma})\subset \mM$. 
   By Proposition~\ref{pt} (iv) and~\cite[Proposition 3.18]{Tst3}, 
   $\mathfrak{Obj}(\aA_{\sigma})$ is an open substack of $\mM$.  
   We have the following lemma. 
   \begin{lem}\label{comm}
   Under the above situation, the following diagram is commutative,  
   \begin{align}\label{com1}\xymatrix{
  K_0(\St/\mathfrak{Coh}^v(X)) \cap 
  K_0(\St/\fPPer^v(X)) \ar[r] \ar[dr]
  & K_0(\St/\mathfrak{Coh}^v(X)) \ar[r]^{\pi_{\ast}} & 
   K_0(\St/A) \ar[rd]^{\Upsilon _A'} \ar[d]_{f_{\ast}} & \\
   & K_0(\St/\fPPer^v(X)) \ar[r]^{\bar{\pi}_{\ast}}
    & K_0(\St/\bar{A})
   \ar[r]^{\Upsilon _{\bar{A}}'} & \Lambda.}\end{align}
   Here $A=\Chow_{\beta}(X)$ and $\bar{A}=\Chow_{f_{\ast}\beta}(Y)$. 
   Furthermore if $f^{\ast}\omega' \cdot \beta =0$, the 
   following diagram commutes,
   \begin{align}\label{com2}
   \xymatrix{
   K_0(\St/\mathfrak{Coh}^v(X)) \cap 
  K_0(\St/\mathfrak{Obj}^v(\aA_{\sigma})) \ar[r] \ar[dr]
  & K_0(\St/\mathfrak{Coh}^v(X)) \ar[r]^{\pi_{\ast}} & 
   K_0(\St/A) \ar[rd]^{\Upsilon _A'} \ar[d]_{\pi_{1\ast}} & \\
   & K_0(\St/\mathfrak{Obj}^v(\aA_{\sigma})) \ar[r]^{\pi_{0\ast}}
    & K_0(\St/\mathbb{C})
   \ar[r]^{\Upsilon'} & \Lambda.}\end{align}
   Here $\pi _0 \colon \mathfrak{Obj}^v(\aA_{\sigma}) \to \Spec \mathbb{C}$, 
   $\pi _1\colon A\to \Spec \mathbb{C}$ are the
   structure morphisms. The diagram also commutes after 
   the following replacements. 
   \begin{align*}
   \begin{array}{lll}
  \mathfrak{Coh}^v(X) \mapsto \mathfrak{Coh}^v(X)[1], &
 \pi \mapsto \pi', & A \mapsto A', \\
\mathfrak{Coh}^v(X) \mapsto \mathfrak{Per}^v(X), &
   \pi \mapsto \bar{\pi}, & A \mapsto \bar{A}.
   \end{array}
   \end{align*}
   Here we have used the notation of Remark~\ref{observe}. 
   \end{lem}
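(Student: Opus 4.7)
The plan is to treat each diagram separately, reducing to generators $[(U,\rho)]$ with $U$ a quasi-projective variety (via Remark~\ref{gene}) and then isolating a factorization of morphisms on Chow varieties that brings us into the scope of Lemma~\ref{finite}, extended from $K_0(\Var/-)$ to $K_0(\St/-)_{\Upsilon}$ in the manner of Lemma~\ref{descend}.

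For (\ref{com1}), the essential input is that $\bar\pi=f\circ\pi$ on the intersection $\mathfrak{Coh}^v(X)\cap\fPPer^v(X)$, where $f\colon\Chow_\beta(X)\to\Chow_{f_\ast\beta}(Y)$ denotes the induced cycle pushforward. This rests on Proposition~\ref{pt}(i), which forces $\dR f_\ast E=f_\ast E$ to be a sheaf for $E\in\pPPer(\dD_X)$, together with the standard cycle-theoretic identity $s(f_\ast E)=f_\ast s(E)$ for one-dimensional sheaves under a proper morphism. Next I would verify that $f\colon\Chow_\beta(X)\to\Chow_{f_\ast\beta}(Y)$ is finite: a fiber over a cycle $W$ consists of the strict transform $\widetilde W$ plus non-negative integer combinations $\sum n_i C_i$ of components of the exceptional tree $C$ with $\sum n_i[C_i]=\beta-[\widetilde W]$, and each $n_i$ is bounded via pairing with an ample divisor on $X$, so the fiber is finite and $f$ is finite (being also proper). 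Lemma~\ref{finite} applied to the stack-function extension then gives $\Upsilon_A'=\Upsilon_{\bar A}'\circ f_\ast$, and combining with $\bar\pi_\ast=f_\ast\pi_\ast$ on the intersection yields the outer commutativity of (\ref{com1}).

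For (\ref{com2}), the hypothesis $f^\ast\omega'\cdot\beta=0$ combined with the projection formula gives $\omega'\cdot f_\ast\beta=0$. Since $\pi$ only meets $E\in\Coh_{\le 1}^v(X)$ with $s(E)$ effective of class $\beta$, the pushforward $f_\ast s(E)$ is effective of class $f_\ast\beta$, and ampleness of $\omega'$ forces $f_\ast s(E)=0$, i.e.\ $s(E)$ is supported on the exceptional tree $C$. Hence the image of $\pi_\ast$ lies inside the subvariety of $A=\Chow_\beta(X)$ parametrizing cycles on $C$, which is a finite (possibly empty) set of reduced points by the same boundedness argument. On such a zero-dimensional base the relative Lefschetz operator $\eta_R$ vanishes, so every Jordan cell of the relative $(\mathfrak{sl}_2)_L\times(\mathfrak{sl}_2)_R$-action has length one and $P(U,\rho)=\Upsilon([U])$ for smooth projective $U$ mapping to $A$; this gives $\Upsilon_A'=\Upsilon'\circ\pi_{1\ast}$ on the relevant part of $K_0(\St/A)$. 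Together with the automatic equality $\pi_{1\ast}\pi_\ast=\pi_{0\ast}$ on the intersection (both being pushforward to $\Spec\mathbb{C}$), this yields (\ref{com2}). The two replacements are handled identically: $A'=\Chow_{-\beta}(X)$ is likewise zero-dimensional because $f^\ast\omega'\cdot(-\beta)=0$, and for $\mathfrak{Per}^v(X)$ in place of $\mathfrak{Coh}^v(X)$ we have $\bar A=\Chow_{f_\ast\beta}(Y)=\Spec\mathbb{C}$ directly from the convention $\Chow_0(Y)=\Spec\mathbb{C}$ combined with $f_\ast\beta=0$.

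The main obstacle I expect lies in promoting the identity $s(f_\ast E)=f_\ast s(E)$ from a pointwise statement to an equality of $1$-morphisms of stacks, which is what is needed to assert $\bar\pi=f\circ\pi$ on the intersection. On $\mathbb{C}$-points this is a standard length computation at generic points of the supports; to globalize it one invokes Proposition~\ref{pt}(i) in families, so that $R^{\ge 1}f_\ast$ vanishes uniformly on $\fPPer(X)$ and $\dR f_\ast$ genuinely yields a $1$-morphism through which $\bar\pi$ factors as $f\circ\pi$ on the common substack.
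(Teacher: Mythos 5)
Your argument is correct and takes essentially the same route as the paper: the left-hand triangles follow from functoriality of push-forwards together with the cycle identity $s(\dR f_{\ast}E)=f_{\ast}s(E)$ (which the paper treats as routine), and the right-hand triangle of (\ref{com1}) from finiteness of $f_{\ast}\colon \Chow_{\beta}(X)\to\Chow_{f_{\ast}\beta}(Y)$ combined with Lemma~\ref{finite}. The only cosmetic difference is in (\ref{com2}): the paper observes that $f^{\ast}\omega'\cdot\beta=0$ forces $\bar{A}=\Spec\mathbb{C}$, identifies $f_{\ast}$ with $\pi_1$, and cites Lemma~\ref{finite} once more, whereas you unfold that lemma's proof by hand (vanishing of $\eta_R$ over a zero-dimensional base), which amounts to the same computation.
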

   \begin{proof}
   In both diagrams, the commutativity of the 
   LHS follows from the functorial property of the push-forwards. 
   Let us show the commutativity of the RHS. 
   Since $f\colon X\to Y$ contracts only finite number of 
   rational curves, the map 
   $$f_{\ast}\colon A \ni Z \longmapsto f_{\ast}Z \in \bar{A},$$
   which sends an algebraic cycle $Z$ on $X$ to the cycle 
   $f_{\ast}Z$ on $Y$, is a finite morphism. 
   (See~\cite[Theorem 6.8]{Ko} for the existence of 
   the above morphism.)
   Hence 
   we can apply Lemma~\ref{finite}, which shows the commutativity 
   of the RHS of (\ref{com1}).
    Finally
   if $f^{\ast}\omega'\cdot \beta=0$, any effective one cycle of homology 
   class $\beta$ is contracted by $f$. 
   Thus $\bar{A}=\Spec \mathbb{C}$, and $f_{\ast}\colon A\to \bar{A}$
    is identified with 
   $\pi_1$.  Therefore the commutativity of 
   the RHS of (\ref{com2}) also follows from 
   Lemma~\ref{finite}.  
   \end{proof}
   Now we show the following proposition. 
   \begin{prop}\label{bound}
   Let $\sigma \in \overline{U}_X$ be as above  
   and $\tau =(W, \qQ)\in U_X$. Then for any $v\in N_1(X)\oplus \mathbb{Z}$, 
   one has $P(v,\sigma)=P(v,\tau)$. 
   \end{prop}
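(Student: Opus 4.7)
The plan is to compare $\epsilon^{(v,\phi)}(\sigma)$ with $\epsilon^{(v,\phi)}(\tau)$ via the local transformation formula of Proposition~\ref{trans}, and then to show that the commutator corrections disappear under the push-forward $\Upsilon_{\bar{A}}'\circ\bar{\pi}_{\ast}$ defining $P(v,\sigma)$. This directly generalises the argument used in Proposition~\ref{choice} to the boundary situation.

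First I would dispose of the trivial cases. If $v\notin C_{\sigma}(\phi)$ for any $\phi\in\mathbb{R}$, then $P(v,\sigma)=0$; applying Proposition~\ref{wallcham} to a compact neighbourhood $\overline{\uU}\ni\sigma$, any $\tau$-semistable object of numerical class $v$ would persist across the chamber $\cC\ni\tau$ and remain semistable at the boundary point $\sigma$, contradicting $v\notin C_{\sigma}(\phi)$. Hence $P(v,\tau)=0=P(v,\sigma)$. The case $v\in C_{\sigma}(\phi)$ with $1<\phi\le 2$ reduces to that of $-v$ with phase in $(0,1]$ by Definition~\ref{abo} and Definition~\ref{mode}, so from now on I may assume $v\in C_{\sigma}(\phi)$ with $0<\phi\le 1$.

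Next fix $\tau=(W,\qQ)\in U_X\cap\cC$ with $W$ rational and $0<\varepsilon<1/6$ small enough that Proposition~\ref{trans} applies. This produces the identity
$$
\epsilon^{(v,\phi)}(\sigma)=\epsilon^{(v,\phi)}(\tau)+[\mbox{multi-commutators of }\epsilon^{(v_i,\phi_i)}(\tau)]
$$
in a common algebra $K_0(\St/\mathfrak{Obj}(\aA_{\psi}))$ for a suitable heart $\aA_{\psi}=\qQ((\psi-1,\psi])$ with $\psi-1<\phi-\varepsilon<\phi+\varepsilon<\psi$. Because $\tau$ sits in the chamber $\cC$ and $\sigma\in\overline{\cC}$, every $\tau$-semistable factor appearing in the formula is also $\sigma$-semistable, hence belongs to $\pPPer(\dD_X)$; in particular each term represents a class in $K_0(\St/\fPPer^{v_i}(X))$. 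Applying $\Upsilon_{\bar{A}}'\circ\bar{\pi}_{\ast}$ to the identity, the main term is converted into $P(v,\tau)/(\mathbb{L}-1)$ by the commutative diagram~(\ref{com1}) of Lemma~\ref{comm}, and the commutator terms vanish by Lemma~\ref{gensitu} applied with $\aA=\pPPer(\dD_X)$ and the morphism $\bar{\pi}$. Its hypothesis $\bar{\pi}([E])=\bar{\pi}([E_1\oplus E_2])$ on short exact sequences in $\pPPer(\dD_X)$ follows at once from the exactness of $\dR f_{\ast}$ on $\pPPer(\dD_X)$ guaranteed by Proposition~\ref{pt}(i). Multiplying through by $(\mathbb{L}-1)$ yields $P(v,\sigma)=P(v,\tau)$.

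The main technical obstacle is the intermediate claim that every $\epsilon^{(v_i,\phi_i)}(\tau)$ appearing in the transformation formula can really be realised as a stack function on $\fPPer^{v_i}(X)$. Morally this is a consequence of the wall-and-chamber persistence of semistability across $\overline{\cC}$ (Proposition~\ref{wallcham}) together with the openness of $\fPPer(X)\subset\mM$ from Proposition~\ref{pt}(iv), but genuine care is needed because $\sigma$ lies on the boundary of $U_X$ rather than in its interior, so one must argue that passing to the limit preserves membership in $\pPPer(\dD_X)$. Once this is granted, the remaining vanishing argument proceeds formally via Lemma~\ref{gensitu} and the compatibility diagrams of Lemma~\ref{comm}.
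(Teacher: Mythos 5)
For $0<\phi<1$ your argument is sound and is essentially a mirror image of the paper's: the paper applies Lemma~\ref{gensitu} with $\aA=\Coh_{\le 1}(X)$ and the Chow morphism $\pi$, reads off $P(v,\tau)$ directly from Definition~\ref{mode}, and then uses the diagram (\ref{com1}) on the $\sigma$-term, whereas you apply Lemma~\ref{gensitu} with $\aA=\pPPer(\dD_X)$ and $\bar{\pi}$ (the hypothesis (\ref{dirsum}) does hold, since $\dR f_{\ast}$ takes exact sequences in $\pPPer(\dD_X)$ to exact sequences in $\Coh_{\le 1}(Y)$ and the support cycle is additive) and use (\ref{com1}) on the $\tau$-term. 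Both versions work because for $\varepsilon$ small one has $\qQ((\phi-\varepsilon,\phi+\varepsilon))\subset \Coh_{\le 1}(X)\cap\pPPer(\dD_X)$, so all terms of (\ref{formula2}) live in both $K_0(\St/\mathfrak{Coh}(X))$ and $K_0(\St/\fPPer(X))$.

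The genuine gap is the case $\phi=1$, which you fold into the same uniform argument. There the factors $\epsilon^{(v_i,\phi_i)}(\tau)$ with $\phi_i\in(1,1+\varepsilon)$ are supported on shifted sheaves $F[1]\in\Coh_{\le 1}(X)[1]$, and these need \emph{not} lie in $\pPPer(\dD_X)$: persistence of semistability up to the boundary of the chamber only constrains their $\sigma$-phase to be near $1$, possibly strictly bigger than $1$, so they sit in $\pP((1,3/2))$ rather than in the heart $\pP((0,1])=\pPPer(\dD_X)$ (for instance $F[1]$ with $F$ supported on a curve not contracted by $f$ has $f_{\ast}F\neq 0$, hence is not a perverse object). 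Consequently the push-forward $\bar{\pi}_{\ast}$ is simply not defined on those terms, and ``applying $\Upsilon_{\bar{A}}'\circ\bar{\pi}_{\ast}$ to the identity'' is meaningless as stated; moreover the leading term $\epsilon^{(v,\psi)}(\tau)$ may itself have $\psi>1$, so identifying it with $P(v,\tau)$ requires the convention $P(v,\tau)=P(-v,\tau)$ of Definition~\ref{mode} together with Remark~\ref{observe}. This is exactly why the paper treats $\phi=1$ separately: there $f^{\ast}\omega'\cdot\beta=0$, hence $\bar{A}=\Spec\mathbb{C}$, and one works in the auxiliary heart $\aA_{\sigma}$ containing $\qQ((1-\varepsilon,1+\varepsilon))$, applies Lemma~\ref{gensitu} with the push-forward to the point (whose hypothesis is vacuous), and then compares with $P(v,\sigma)$ and $P(v,\tau)$ through the diagram (\ref{com2}) and its shifted and perverse replacements, splitting into the subcases $\psi\le 1$ and $\psi>1$. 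You flag the boundary issue in your last paragraph but treat it as a technicality to be ``granted''; in fact it fails as stated, and a separate argument along the above lines is required.
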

   \begin{proof}
   It is enough to show $P(v, \sigma)=P(v,\tau)$
   under the situation of Proposition~\ref{trans}. 
   Furthermore the same proof of Proposition~\ref{choice}
   shows $P(v, \sigma)=P(v,\tau)=0$ if 
   $v\notin C_{\sigma}(\phi)$ for any $\phi \in \mathbb{R}$. 
   Thus we may assume $v\in C_{\sigma}(\phi)$ 
   for some $0<\phi \le 1$. 
   First we assume $0<\phi <1$. Let us take
    $\varepsilon >0$ as in Proposition~\ref{trans}. 
  We can choose $\varepsilon>0$ sufficiently small so that 
   $0<\phi -2\varepsilon <\phi +2\varepsilon <1$. Then 
   we have 
   $$\qQ((\phi -\varepsilon, \phi +\varepsilon))
   \subset \pPPer(\dD_X)\cap \Coh _{\le 1}(X).$$
   Thus all the terms in (\ref{formula}) are contained in
   both $K_0(\St/\fPPer(X))$ and $K_0(\St/\mathfrak{Coh}(X))$, 
   and (\ref{formula2}) holds in both algebras. 
   Applying Lemma~\ref{gensitu} for $\aA =\Coh _{\le 1}(X)$, we have 
   \begin{align}\label{eq1}
   (\mathbb{L}-1)\Upsilon _{A}'(\pi_{\ast}\epsilon^{(v, \phi)}(\sigma))
   =(\mathbb{L}-1)\Upsilon _{A}'(\pi_{\ast}\epsilon^{(v, \psi)}(\tau)),
   \end{align}
   for some $\psi \in (\phi -\varepsilon, \phi +\varepsilon)$. 
   (Here we have used the same notation in Lemma~\ref{comm}.)
   By Definition~\ref{mode}, the RHS of (\ref{eq1})
   is $P(v,\tau)$. On the other hand
   we have 
   \begin{align}\label{eq2}(\mathbb{L}-1)\Upsilon _{A}'(\pi_{\ast}\epsilon^{(v, \phi)}(\sigma))
   =(\mathbb{L}-1)
   \Upsilon _{\bar{A}}'(\bar{\pi}_{\ast}\epsilon^{(v, \phi)}(\sigma)),
   \end{align}
   by the diagram (\ref{com1}).
   Then the RHS of (\ref{eq2})
   is $P(v,\sigma)$ by Definition~\ref{abo}. Hence (\ref{eq1}) and (\ref{eq2}) show
   $P(v, \sigma)=P(v, \tau)$. 
   
   Next suppose $\phi =1$. 
   Note that in this case $f^{\ast}\omega' \cdot \beta=0$
   for $v=(\beta, k)$. 
   For a sufficiently small $\varepsilon >0$,
   one has 
   $$\qQ((1 -\varepsilon, 1+\varepsilon))\subset \aA_{\sigma}.$$
   Thus the formulas
   (\ref{formula}), (\ref{formula2}) hold 
   in $K_0(\St/\mathfrak{Obj}(\aA_{\sigma}))$. Applying Lemma~\ref{gensitu} for $\aA =\aA _{\sigma}$, 
    we have 
   \begin{align}\label{Note}
   (\mathbb{L}-1)\Upsilon'(\pi_{0\ast}\epsilon^{(v,1)}(\sigma))
   =(\mathbb{L}-1)
   \Upsilon'(\pi_{0\ast}\epsilon^{(v,\psi)}(\tau)),\end{align}
   for some $\psi \in (1-\varepsilon, 1+\varepsilon)$. 
   If $\psi \le 1$, 
   then $\epsilon^{(v,\psi)}(\tau)$ is contained in 
   both $K_0(\St/\mathfrak{Coh}(X))$ and 
   $K_0(\St/\mathfrak{Obj}(\aA_{\sigma}))$, 
   hence the RHS of (\ref{Note}) is equal to $P(v, \tau)$ by 
   the diagram (\ref{com2}).
   Also the
   diagram (\ref{com2}) after the following replacement, 
    $$\mathfrak{Coh}^v(X) \mapsto \fPPer^{v}(X), 
    \quad \pi \mapsto \bar{\pi} \quad 
    A \mapsto \bar{A}, $$
  shows that 
   the LHS of (\ref{Note}) is equal to $P(v, \sigma)$. 
   Hence in this case we obtain $P(v, \sigma)=P(v, \tau)$. 
   When $\psi >1$, we can use the diagram (\ref{Note})
  after the replacement, 
  $$\mathfrak{Coh}^v(X) \mapsto \mathfrak{Coh}^v(X)[1], 
    \quad \pi \mapsto \pi' \quad 
    A \mapsto A', $$
    and conclude that $P(v, \sigma)=P(v, \tau)$. 
  In fact by Remark~\ref{observe}, we see that the RHS of (\ref{Note}) 
  is equal to $P(v, \tau)$ also in this case.
    \end{proof}

  \subsection{Proof of Theorem~\ref{goal}}
   \begin{proof}
   We may assume that there is a diagram (\ref{flop}), 
   since any birational map $\phi \colon W\dashrightarrow X$
   is connected by a sequence of flops~\cite{Kawaflo}. 
   Let $H$ be a relatively ample divisor on $W$ over $Y$ and 
   $$\sigma=(Z_{(-\delta H, g^{\ast}\omega')}, \iPPer(\dD_W))
   \in \overline{U}_W,$$
   the stability condition in (\ref{pair}), 
   applied for $g\colon W\to Y$. 
   By Proposition~\ref{pt} (ii),  we have 
   $$\Phi_{\ast}\sigma=( 
    Z_{(-\delta \phi_{\ast}H, f^{\ast}\omega')}, \oPPer(\dD_X)).$$
    (Here we have used the right diagram of (\ref{com3}).)
    Since $-\phi_{\ast}H$ is relatively ample over $Y$, 
    $\Phi_{\ast}\sigma$ is one of the stability 
    conditions constructed in (\ref{pair}). Hence by 
    Proposition~\ref{bound}, it is enough to show 
    \begin{align}\label{wall}
    P(W, v, \sigma)=P(X, \phi_{\ast}v, \Phi_{\ast}\sigma).
    \end{align}
   We may assume $v\in C_{\sigma}(\phi)$ for $0<\phi \le 1$. 
   By Proposition~\ref{pt} (ii), 
   the equivalence $\Phi$ induces the isomorphism, 
   \begin{align}\label{In}\Phi_{\St}\colon  K_0(\St/\ifPPer(W)) 
   \lr K_0(\St/\ofPPer(X)). \end{align}
   It is easy to see that (\ref{In}) preserves $\ast$-product.
   Furthermore by the left diagram of (\ref{com3}), the isomorphism 
   (\ref{In}) takes $\delta^{(v, \phi)}(\sigma)$ to
    $\delta^{(\phi_{\ast}v, \phi)}(\Phi_{\ast}
   \sigma)$.
    Thus we have
   \begin{align}\label{indu}
   \Phi_{\St}\epsilon^{(v,\phi)}(\sigma)=\epsilon^{(\phi_{\ast}v,\phi)}(
   \Phi_{\ast}\sigma).\end{align}
   On the other hand, we have the 
   commutative diagram, 
   \begin{align}\label{final}\xymatrix{
   K_0(\St/\ifPPer^v(W)) \ar[r]^{\Phi_{\St}}\ar[rd]_{\bar{\pi}_{W\ast}} & 
   K_0(\St/\ofPPer^{\phi_{\ast}v}(X))\ar[d]_{\bar{\pi}_{X\ast}} & \\
   & K_0(\St/\bar{A}) \ar[r]^{\Upsilon_{\bar{A}}} & \Lambda.}\end{align}
   Hence the diagram (\ref{final}) together with (\ref{indu}) 
   imply (\ref{wall}). 
   \end{proof}
   
    \begin{rmk}\label{betti}\emph{
    If $v=(0,1)$ and $\sigma \in U_X$, 
     then $P(v,\sigma)$ is equal to $\sum _{i}b_i(X) t^i$. 
     Hence Proposition~\ref{bound} and (\ref{wall}) imply 
    $b_i(W)=b_i(X)$ for all $i\in \mathbb{Z}$. }
     \end{rmk}
  
  \begin{exam}\emph{
  Let $\mathbb{P}^1 \cong C\subset X$ be a 
  $(-1,-1)$ curve, i.e. the normal bundle $N_{C/X}$ 
  is isomorphic to $\oO_{C}(-1)\oplus \oO_{C}(-1)$. 
  For $m\neq 0$, the same computation in~\cite[Proposition 4.5]{HST} shows, 
  $$n_g^{m[C]}(X)= \left\{
  \begin{array}{ll} 1 & \mbox{if }g=0, m=\pm 1, \\
  0 & \mbox{otherwise.} \end{array} \right. $$
  Let $\phi \colon W\dashrightarrow X$ be a flop at $C$, 
  and $C^{\dag}\subset W$ the flopped curve. 
  Since $\phi_{\ast}[C^{\dag}]=-[C]$, one can see 
   $$n_g^{[C^{\dag}]}(W)=n_g ^{-[C]}(X)=1,$$
  by the above computation. Note that since $-[C]$ is not effective,
  the invariant $n_g^{\beta}(X)$ should also be defined for non-effective 
  one cycle classes $\beta$. }
  \end{exam}

  \section{Some technical lemmas}\label{tech}
  In this section, we prove some postponed technical lemmas. 
  \subsection{Proof of Lemma~\ref{op}}
  \begin{proof}
  Let us take
  $\sigma=\sigma_{(B,\omega)}\in U_X$ and $B'+i\omega '
  \in N^1(X)_{\mathbb{C}}$. 
  First we show 
  $\lVert Z_{(B, \omega)} -Z_{(B', \omega')}\rVert _{\sigma}<\infty$. 
  (See~\cite[Section 6]{Brs1} for $\lVert \ast \rVert _{\sigma}$.)
  By the definition of $\lVert \ast \rVert _{\sigma}$, it is equal to
  \begin{align}\label{sup}
  \sup \left\{ 
  \frac{\lvert\{(B-B')+i(\omega-\omega')\}\ch_2(E)\rvert}{\lvert Z_{(B,\omega)}(E)\rvert} : E \mbox{ is semistable in } 
  \sigma \right\}.
  \end{align}
  Let us put $m=\lvert Z_{(B, \omega)}(E) \rvert$
  for a $\sigma$-semistable object $E$. Then we have 
  $\lvert\omega \cdot \ch_2(E)/m \rvert \le 1$. 
 We set $K \subset N_1(X)$ as
   \begin{align}\label{comp}
  K\cneq \{ c\in \overline{\NE}(X) \mid \omega \cdot c \le 1\}
  \subset N_1(X).\end{align}
  As in the proof of Proposition~\ref{wallcham}, the space
  $K$ is compact. 
  Therefore the function 
  $$K \ni c \longmapsto \lvert \{(B-B')+
  i(\omega-\omega')\}c\rvert \in \mathbb{R}$$
  has a maximum value, say $M$. Since $\ch _2(E)/m \in K$
  or $-\ch_2(E)/m \in K$, 
  we have $(\ref{sup})\le M <\infty$. 
  
  Then by~\cite[Proposition 6.3]{Brs1}, the 
  map $\Stab(X) \to N^1(X)_{\mathbb{C}}$ is a local
  homeomorphism. 
 Suppose that
   $B'+i\omega'$ satisfies
  $$\lVert Z_{(B, \omega)}- Z_{(B', \omega')} \rVert _{\sigma} <
  \sin \pi\varepsilon,$$
  for a sufficiently small $\varepsilon$. 
  Then~\cite[Theorem 7.1]{Brs1} guarantees the 
  existence of a stability condition $\tau=(Z_{(B',\omega')}, \qQ)$
  which satisfies $d(\pP, \qQ)<\varepsilon$. (See~\cite[Section 6]{Brs1} for 
  $d(\ast,\ast)$.) If we know $\tau \in U_X$, we can conclude 
  $U_X$ is open. 
  
  To conclude $\tau \in U_X$, it is enough to check 
  $\qQ((0,1])\subset \Coh_{\le 1}(X)$. 
  According to the proof of~\cite[Theorem 7.1]{Brs1}, 
  the set of objects $\qQ(\phi)$ for $0<\phi \le 1$
  is obtained as follows:
  an object $E\in \dD _X$ is contained in $\qQ(\phi)$ if and only if 
  there is a thin and enveloping subcategory $E\in \pP((a,b))$
 such that $E$ is $Z_{(B',\omega')}$-semistable with phase $\phi$. 
  (See~\cite[Definition 7.2, Definition 7.4]{Brs1} 
  for the notion of thin enveloping
  subcategory.) Take $E\in \qQ(\phi)$ with $E\in \pP((a,b))$
  as above. If $0<a<b\le 1$, one has $E\in \Coh_{\le 1}(X)$. 
  Suppose $b>1$. Then there is a distinguished 
  triangle 
 \begin{align}\label{tr}
 H^{-1}(E)[1] \lr E \lr H^0(E),\end{align}
  with $H^{-1}(E)[1] \in \pP((1,b))$ and $H^0(E)\in \pP((a,1])$. 
  The semistability of $E$ in $Z_{(B',\omega')}$ implies 
  \begin{align}\label{arg}
  \arg Z_{(B',\omega')}(H^{-1}(E)[1]) \le \arg Z_{(B',\omega')}
  (H^0(E)).\end{align}
  Here $\arg$ is taken in the interval $(\pi i(a-\varepsilon), 
  \pi i(b+\varepsilon))$. However since $H^i(E) \in \Coh_{\le1}(X)$, 
  (\ref{arg}) implies $E\cong H^{0}(E)$ or $E\cong H^{-1}(E)[1]$. 
  Since $E$ has phase $0<\phi \le 1$ with respect to 
  $Z_{(B',\omega')}$, one must have $E\cong H^{0}(E)\in \Coh_{\le 1}(X)$. 
  The similar argument shows $E\in \Coh_{\le 1}(X)$ when 
  $a\le 0$.  
  \end{proof}
  
  \subsection{Proof of Lemma~\ref{bounded}}
  \begin{proof}
  In fact we show the following stronger claim. 
  Let $\Sigma \subset \Stab(\dD_{X})$ be the 
  connected component which contains $U_X$, and 
  $\Sigma'\subset \Sigma$ the subset of $\sigma=(Z, \pP)$
  such that $\Imm Z\subset \mathbb{C}$ is discrete. 
  We show that for any $\sigma \in \Sigma'$, $\phi \in \mathbb{R}$
  and $z\in \mathbb{C}$, the following set of objects, 
  $$M^{(z, \phi)}(\sigma)\cneq \{ E\in \pP(\phi)\mid Z(E)=z\},$$
  is bounded. 
   For an ample divisor $\omega$ on $X$, let us consider the point 
   $\sigma_{(0, \omega)} \in U_X$. By Remark~\ref{Gs}, 
   any $\sigma_{(0, \omega)}$-semistable object is
   nothing but $\omega$-Gieseker semistable sheaf up to shift.
   As is well-known, the set of $\omega$-Gieseker semistable 
   sheaves with a fixed Hilbert polynomial 
   forms a bounded family, hence 
   the claim is true for $\sigma=\sigma_{(0, \omega)}$. 
   
    Next suppose that the above claim is true for some 
   $\sigma =(Z, \pP) \in \Sigma'$. 
    We show that if $\tau =(W, \qQ)
   \in \Sigma'$ is sufficiently close to $\sigma$, 
   then the claim also holds for $\tau$. 
   Obviously once we show this, then the claim is true 
   for any $\sigma \in \Sigma'$. 
   In order to show the boundedness of $M^{(z, \phi)}(\tau)$, 
   we may assume $\phi =1/2$ by applying some element 
   $g\in \widetilde{\GL}^{+}(2, \mathbb{R})$
   (cf.~\cite[Lemma 8.2]{Brs1}) to $\sigma$, $\tau$. 
   If $\tau$ is sufficiently close to $\sigma$, we have 
   $$\qQ(1/2) \subset \pP((1/4, 3/4)) \subset  \qQ((0,1)).$$
    For $E\in M^{(z, 1/2)}(\tau)$, 
   let $F_i \in \pP(\phi_i)$ for $\phi _i \in (1/4, 3/4)$, $1\le i\le n$
   be the $\sigma$-semistable factors of $E$. 
   Since $\Imm W(F_i) \le \Imm W(E)$, there is $\delta >0$
   which does not depend on $E$ such that $\Imm Z(F_i) \le \Imm z +\delta$,
   if $\tau$ is enough close to $\sigma$. 
   Because $\Imm Z \subset \mathbb{C}$ is discrete, 
    we see that numbers of semistable factors $n$, and
   the values $z_i=Z(F_i) \in \mathbb{C}$ have finite number of 
   possibilities. Since $F_i \in M^{(z_i, \phi_i)}(\sigma)$, 
   the boundedness of $M^{(z_i, \phi_i)}(\sigma)$ for each $i$ 
   implies the boundedness of $M^{(z, \phi)}(\tau)$. 
   \end{proof}

  \subsection{Proof of Lemma~\ref{fin}}
  \begin{proof}
  We may assume $0<\phi \le 1$ and
  let us take $0<\varepsilon <1/6$. 
  Since $\sigma \in \overline{U}_X$, there is 
  $\tau =(Z_{(B,\omega)},\Coh_{\le 1}(X)) \in U_X$
  with $B$, $\omega$ rational  
  such that $C_{\sigma}(\phi) \subset C_{\tau}((\phi -\varepsilon, \phi+\varepsilon))$. From this it is clear that there is a finite number of
  possibilities for $n$ in (\ref{eps}). Hence it is enough to 
  check the finiteness of the set,
  $$\{(v_1, v_2) \mid v_1+v_2=v, v_i \in C_{\sigma}(\phi)\}.$$
  We write $v_i=(\beta_i,k_i) \in N_1(X) \oplus \mathbb{Z}$. 
  It is enough to check that the possible pairs
   $(\beta_1,\beta_2)$ are finite.  
  First we assume $0<\phi <1$. We may assume
  that $0<\phi-\varepsilon <\phi +\varepsilon <1$. 
  Then $\beta _i \in \overline{NE}(X)$ and we have 
  \begin{align}\label{le}
  \Imm Z_{(B,\omega)}(v_i) \le \Imm Z_{(B,\omega)}(v).\end{align}
  Since (\ref{le}) implies $\beta _i \cdot \omega \le \beta \cdot \omega$
  and (\ref{comp}) is compact, the possible pairs 
  $(\beta _1, \beta _2)$ must be finite. 
  
  Next we treat the case of $\phi =1$. Then $v_i$ is decomposed
  as follows, 
  $$v_i =\sum _{j}v_{ij}, \quad v_{ij} \in C_{\tau}(\phi_{ij}) \mbox{ with }
  \phi _{ij}\in (1-\varepsilon, 1+\varepsilon).$$
  If we write $v_{ij}=(\beta_{ij},k_{ij})$, 
  then $\beta_{ij} \in \overline{NE}(X)$ or 
  $-\beta_{ij}\in \overline{NE}(X)$. 
   We can easily see, 
  $$\lvert \beta _{ij}\cdot \omega \rvert =\lvert \Imm Z_{(B,\omega)}(v_{ij}) \rvert \le \lvert 
  \Ree Z_{(B,\omega)}(v) \rvert \cdot \tan \pi \varepsilon.$$
  Again since (\ref{comp}) is compact, the possible 
  $\{\beta _{ij}\}_{i,j}$ are finite. Thus the pair
  $(\beta _1, \beta _2)$ 
  also has a finite number of possibilities.

  \end{proof}
  
 \subsection{Proof of Proposition~\ref{pt} (iii)}
 \begin{proof}
 We have to show stability conditions in (\ref{pair}) 
are contained in $\overline{U}_X$. In~\cite[Proposition 4.4]{Tst}, 
the author put the assumption that there exists a hyperplane 
$Y_0\subset Y$ such that $f^{-1}(Y_0)$ is smooth. 
In our purpose, we have to improve the proof and show that
 actually stability conditions in (\ref{pair})
are contained in $\overline{U}_X$ without such assumption. 

The proof goes on as in Lemma~\ref{op}, and we show
 the case of $p=0$ for simplicity. 
Let $\sigma =\sigma_{(-\delta H, f^{\ast}\omega')}$ be 
as in (\ref{pair}), and take $B+i\omega \in N^1(X)_{\mathbb{C}}$. 
We also set $Z=Z_{(-\delta H, f^{\ast}\omega')}$. 
The value $\lVert Z-Z_{(B,\omega)}\rVert _{\sigma}$ is given by 
\begin{align}\label{go}
\sup \left\{ 
  \frac{\lvert\{(-\delta H-B)+i(f^{\ast}\omega'-\omega)\}\ch_2(E)\rvert}{\lvert Z(E)\rvert} : E \mbox{ is semistable in } 
  \sigma \right\}.\end{align}
In order to show (\ref{go}) is finite, it is enough 
to give the upper bound of (\ref{go}) for 
$E\in \oPPer(\dD_X)$.
Let us take $F\in \widetilde{\Coh}_{\le 1}(X)$ where $\widetilde{\Coh}_{\le 1}(X)$ is given by (\ref{gen2}), 
 and 
 put $m=\lvert Z(F)\rvert$. 
 Then $f^{\ast}\omega '\cdot \ch _2(F)/m \le 1$. 
 By the openness of the big cone, there is a sufficiently small 
 rational polyhedral cone
 $f^{\ast}\omega ' \in \Delta \subset \phi_{\ast}\overline{A}(W)\cup 
 \overline{A}(X)$. Let $K'$ be 
 $$K' \cneq \{ c\in \check{\Delta} \mid f^{\ast}\omega '\cdot c\le 1\}\subset
 N_1(X),$$
 where $\check{\Delta}$ is the dual cone. 
 Then $K'$ is compact, hence the function 
 $$K' \ni c \longmapsto  \lvert \{(-\delta H-B)+
 i(f^{\ast}\omega '-\omega)\} \cdot c \rvert \in \mathbb{R}$$
 has a maximum value, say $M'$. Since $F\in\widetilde{\Coh}_{\le 1}(X)$, we have 
 $\ch _2(F)\cdot \phi_{\ast}H'\ge 0$, where $H'$ is an ample 
 divisor on $W$. Hence $\ch_2(F)/m \in K'$, which implies 
 \begin{align}\label{ine1}
 \frac{\lvert\{(-\delta H-B)+i(f^{\ast}\omega '-\omega)\}\ch_2(F)\rvert}{\lvert Z(F)\rvert} \le M',
 \end{align}
 for all $F\in \widetilde{\Coh}_{\le 1}(X)$. 
 
 Next let us take a non-zero $G\in \oPPer(\dD_X)$ supported 
 on $C$. Since $G$ is generated by (\ref{gen1}), we can write 
 $[G]=\sum _{i=1}^n a_i [S_i]$
 in $K(\dD_X)$ for $a_i \ge 0$.  
 Let us set 
  $c_i \cneq \delta H\cdot C_i$ and 
  $c_i'\cneq \lvert ((-\delta H-B)-i\omega)\cdot C_i\rvert$.
  We have 
  \begin{align}\label{ine2}
 \frac{\lvert\{(-\delta H-B)+i(f^{\ast}\omega '-\omega)\}\ch_2(G)\rvert}{\lvert Z(G)\rvert}\le 
 \frac{\sum _{i=1}^n a_i c_i'}{a_0+ \sum_{i=1}^n a_i c_i}.
 \end{align}
Since $c_i>0$ and $a_i >0$ for some $i$, we have 
RHS$\le M''$ for some $M''>0$ independent of $a_i$. We may 
take $M''=M'$. 

Finally since $\oPPer(\dD_X)$ is generated by (\ref{gen1}), 
(\ref{gen2}),  
any $E\in \oPPer(\dD_X)$ is written as $[E]=[F]+[G]$ in $K(\dD_X)$, where
 $F\in \widetilde{\Coh}_{\le 1}(X)$ and $[G]=\sum _{i=1}^n a_i [S_i]$ for $a_i \ge 0$. 
 We have 
 \begin{align}\label{go1}
 (\ref{go}) & \le \sup \left\{ M' \cdot \frac{\lvert Z(F) \rvert 
 +\lvert Z(G) \rvert}{\lvert Z(F)+Z(G) \rvert} :
 E\in \oPPer(\dD_X) \right\} \\
 &\le M' \cdot \sup \left\{ \frac{\lvert z \rvert +1}{\lvert z+1 \rvert}
 : \Imm z \ge 1 \right\} < \infty.\end{align}
 Now we have proved 
$\lVert Z -Z_{(B,\omega)}\rVert _{\sigma}
<\infty$. As in the proof of Lemma~\ref{op}, for 
any $\varepsilon >0$ there is $B+i\omega \in A(X)_{\mathbb{C}}$
and a stability condition $\tau =(Z_{(B,\omega)}, \qQ)$
such that 
$$d(\sigma, \tau)<\varepsilon, \quad 
\lVert Z -Z_{(B,\omega)}\rVert _{\sigma}
<\sin \pi \epsilon.$$
If we show $\tau \in U_X$, we can conclude $\sigma \in \overline{U}_X$. 
The same proof of the last part of Lemma~\ref{op} shows
 $\tau \in U_X$, (it is enough to notice that in the sequence (\ref{tr}), 
 one has $H^{-1}(E)[1]\in \pP([1,b))$ and the rest is the same,)
 and we leave the detail to the reader.

\end{proof}

\subsection*{Acknowledgement}
This paper was written while the author
was visiting to the Max-Planck Institut f$\ddot{\textrm{u}}$r
Mathematik in Bonn. 
He thanks the institute for the hospitality. 
Also he thanks Atsushi Takahashi for 
valuable discussions on Gopakumar-Vafa invariants, 
Yukiko Konishi for pointing out some references, 
and Hokuto Uehara, So Okada for giving him nice comments on
 the manuscript. Finally he thanks 
 the referee for many suggestions and comments 
 for the improvement of this paper. 
He is supported by 
 Japan Society for the Promotion of Sciences Research 
Fellowships for Young Scientists, No 198007.

Yukinobu Toda, Graduate School of Mathematical Sciences, University of Tokyo

\textit{E-mail address}:toda@ms.u-tokyo.ac.jp

\end{document}